\title{$P$-adic $L$-functions for $\mathrm{GL}(3)$}
\author{David Loeffler and Chris Williams}
\thanks{Supported by: Royal  Society  University  Research  Fellowship  ``L-functions  and  Iwasawa  theory'', EPSRC Standard Grant EP/S020977/1, and ERC Horizon 2020 Consolidator Grant \#101001051 ``Shimura varieties and the BSD conjecture'' (DL); EPSRC Postdoctoral Fellowship EP/T001615/1 (CW)}
\newcommand{\stbt}[4]{\left(\begin{smallmatrix}#1 & #2\\#3&#4\end{smallmatrix}\right)}
\newcommand{\sdthree}[3]{\left(\begin{smallmatrix}#1 \\ & #2\\ && #3\end{smallmatrix}\right)}
\newcommand{\Poref}{\tilde\Pi}
\newcommand{\sar}[2]{\ar@{}[#1]|-*[@]{#2}}
\newcommand{\Pif}{\Pi\subf}
\newcommand{\Phif}{{\Phi\subf}}
\newcommand{\subft}{_{\mathrm{f},t}}
\newcommand{\Phift}{\Phi\subft}
\newcommand{\LSk}{\sV_{(0,-j)}^{\GL_2}}
\newcommand{\LSkn}{\sV_{(0,-j)}}
\newcommand{\chihat}{\widehat{\chi}}
\newcommand{\omegahat}{\widehat{\omega}}
\newcommand{\etahat}{\widehat{\eta}}
\newcommand{\subf}{_{\mathrm{f}}}
\newcommand{\Addresses}{{
  \bigskip
  \footnotesize

 David Loeffler: University of Warwick, Mathematics Institute, Zeeman Building, Coventry CV4 7AL, United Kingdom $ \ \cdot \ $
 \emph{current address}: UniDistance Suisse, Schinerstrasse 18, 3900 Brig, Switzerland
  $ \ \cdot \ $ \\\texttt{david.loeffler@unidistance.ch} $ \ \cdot \ $ \url{http://orcid.org/0000-0001-9069-1877}

  \medskip
 Chris Williams: University of Nottingham, School of Mathematical Sciences, Nottingham NG7 2RD, United Kingdom $ \ \cdot \ $ \texttt{chris.williams1@nottingham.ac.uk} $ \ \cdot \ $  \url{http://orcid.org/0000-0001-8545-0286}

}}
\newcommand{\SL}{\mathrm{SL}}
\DeclareMathOperator{\Eis}{Eis}
\newenvironment{longversion}{}{} 
\newenvironment{shortversion}{\comment}{\endcomment} 
\newcommand{\GL}{\mathrm{GL}}
\newcommand{\Zhat}{\widehat{\Z}}
\newcommand{\smallthreemat}[9]{\left(\begin{smallmatrix}
#1 & #2 & #3 \\
#4 & #5 & #6 \\
#7 & #8 & #9\end{smallmatrix}\right)}
\titleformat{\subsubsection}[runin]
{\normalfont\itshape}
{\thesubsubsection.}
{0.5em}
{}
[.\hspace*{6pt}]
\titlespacing*{\subsubsection}{0pt}{5pt}{0pt}
\titleformat{\subsection}[runin]
{\normalfont\bfseries}
{\thesubsection.}
{0.5em}
{}
[.\hspace*{6pt}]
\begin{document}

\begin{abstract}
	Let $\Pi$ be a regular algebraic cuspidal automorphic representation (RACAR) of $\mathrm{GL}_3(\mathbb{A}_{\mathbb{Q}})$. When $\Pi$ is $p$-nearly-ordinary for the maximal standard parabolic with Levi $\mathrm{GL}_1 \times \mathrm{GL}_2$, we construct a $p$-adic $L$-function for $\Pi$. More precisely, we construct a (single) bounded measure $L_p(\Pi)$ on $\Z_p^\times$ attached to $\Pi$, and show it interpolates all the critical values $L(\Pi\times\eta,-j)$ at $p$ in the left-half of the critical strip for $\Pi$ (for varying $\eta$ and $j$). This proves conjectures of Coates--Perrin-Riou and Panchishkin in this case. We also prove a corresponding result in the right half of the critical strip, assuming near-ordinarity for the other maximal standard parabolic.

 Our construction uses the theory of spherical varieties to build a ``Betti Euler system'', a norm-compatible system of classes in the Betti cohomology of a locally symmetric space for $\mathrm{GL}_3$. We work in arbitrary cohomological weight, allow arbitrary ramification at $p$ along the Levi factor of the standard parabolic, and make no self-duality assumption.  We thus give the first constructions of $p$-adic $L$-functions for RACARs of $\mathrm{GL}_n(\A_{\Q})$ of `general type' (i.e.\ those that do not arise as functorial lifts) for any $n >2$.
\end{abstract}

\maketitle


%
%

%
%

\section{Introduction}

\subsection{Context}

There is a wide network of conjectures, including the Birch--Swinnerton-Dyer and Bloch--Kato conjectures, that describe important arithmetic invariants in terms of special values of (complex) $L$-functions. One of the most successful tools in tackling these conjectures has been Iwasawa theory, in which one seeks to formulate analogous $p$-adic conjectures, replacing the complex analytic $L$-function with a \emph{$p$-adic $L$-function}. The resulting $p$-adic Iwasawa main conjectures are often more tractable than their complex counterparts, and provide beautiful, deep connections between analysis and arithmetic.

A crucial launching point for Iwasawa theory is proving \emph{existence} of $p$-adic $L$-functions. Coates and Perrin-Riou conjectured the existence of a $p$-adic $L$-function attached to every motive $M$ over $\Q$ that is ordinary with good reduction at $p$, and whose $L$-function has at least one critical value \cite{coatesperrinriou89,coates89}. This itself rests on Deligne's period conjecture \cite{Del79}. One may formulate automorphic realisations of these (motivic) conjectures, but proving these remains very difficult.

To illustrate this, consider the fundamental case where $\Pi$ is a regular algebraic cuspidal automorphic representation (RACAR) of $\GL_n(\A_{\Q})$. Then Coates and Perrin-Riou predict that when $\Pi$ is unramified and ordinary at $p$, there exists a $p$-adic measure on $\Zp^\times$ -- the $p$-adic $L$-function of $\Pi$ -- interpolating all its critical $L$-values at $p$; we recall the precise conjecture below.

For $n=1,2$, existence of $p$-adic $L$-functions has been known for decades (starting from e.g.\ \cite{KL64,MSD74}). For $n \geq 3$, however, our understanding of the conjecture remains poor, and it is known only in very special cases, e.g.\ symmetric squares of classical modular forms \cite{Sch88,Hid90,DD97}, RACARs of $\GL_{2n}$ with Shalika models \cite{AG94,Geh18, DJR18}, or Rankin--Selberg transfers from $\GL_n \times \GL_{n+1}$ to $\GL_{n(n+1)}$ \cite{Sch93,KMS00,Jan-Non-Abelian}. In particular, in \emph{all} known cases, $\Pi$ arises as a functorial lift from a group whose $L$-group is a proper subgroup of $\GL_n$: beyond the classical cases  of $n=1, 2$, there are no known constructions applying to automorphic representations $\Pi$ of `general type', i.e.\ not arising from a smaller group in this way.

In this paper, we prove existence of $p$-adic $L$-functions for $p$-ordinary RACARs of $\GL_3(\A_{\Q})$, making no self-duality or functorial lift assumptions. 	This provides the first examples of $p$-adic $L$-functions for general type RACARs of $\GL_n(\A_{\Q})$ for any $n > 2$.

We briefly highlight some further strengths of our construction:
\begin{itemize}\setlength{\itemsep}{0pt}
	\item[--] We work in arbitrary cohomological weight, and prove the so-called `Manin relations'; that is, for each critical region we construct a \emph{single} $p$-adic measure that sees $L$-values at all critical integers (rather than a separate measure for each different critical integer).

	\item[--] We actually prove a more general conjecture of Panchishkin \cite{Pan94} that refines Coates--Perrin-Riou: rather than Coates--Perrin-Riou's assumption of Borel-ordinarity at $p$, we impose $p$-ordinarity only along either the maximal standard parabolic $P_1$ with Levi $\GL_1 \times \GL_2$, or $P_2$ with Levi $\GL_2\times \GL_1$. Additionally, we give constructions (in both cases) assuming only near-ordinarity, hence extending \cite{Pan94} beyond the ordinary case. 

	\item[--] We allow arbitrary ramification at $p$. As a result, our construction applies even to a class of RACARs that have infinite slope at $p$ for the Borel subgroup (those that are nearly ordinary for $P_1$, but have infinite slope along the other maximal parabolic subgroup $P_2$; or vice versa).
\end{itemize}

We believe that our result is ``optimal'', in the sense that $p$-adic $L$-functions for RACARs of $\GL_3$ should only exist as bounded measures when these conditions hold. (Our methods can be adapted to construct finite-order tempered distributions interpolating $L$-values of RACARs which have positive, but sufficiently small, slope for $P_i$; this will be pursued elsewhere.)

\subsection{Our results}
To motivate the precise form of our main result, we first state Panchishkin's refinement of the Coates--Perrin-Riou conjecture for $\GL_n(\A_{\Q})$.

Let $n = 2m+1$ be odd\footnote{The case $n=2m$ is similar, but in many ways simpler: only the $(m, m)$ parabolic subgroup is relevant; and $L(\Pi \times \eta, j)$ is critical for all $j$ in a certain interval, independently of the parity of $\eta$, so we do not need to split the critical interval into the two halves $\Crit^{\pm}$.}. Let $\Pi$ be a (unitary\footnote{Any RACAR of $\GL_3(\A)$ is unitary up to some twist by an integral power of the norm character, so this assumption is harmless. This contrasts with the situation for $\GL_2(\A)$, where the analogous statement fails.}) RACAR of $\GL_{n}(\A_{\Q})$ with central character $\omega_\Pi$ and weight $\lambda = (\lambda_1,...,\lambda_{2m+1})$. Note $\lambda_i \geq \lambda_{i+1}$, $\lambda_m = 0$, and $\lambda_i = -\lambda_{2m+1-i}$.
 With these notations, for $\eta$ a Dirichlet character and $t \in \Z$, the $L$-value $L(\Pi \times \eta, t)$ is critical for $(t, \eta)$ lying in one of the following sets:
\begin{align*}
	\Crit^-(\Pi) &= \{(-j,\eta) \ \ \ : 0 \leq j \leq \lambda_{m-1}, \ \ \  \eta\omega_{\Pi}(-1) = (-1)^{j}\},\\
	\Crit^+(\Pi) &=  \{(j+1,\eta) : 0\leq j \leq \lambda_{m-1},\ \ \  \eta\omega_{\Pi}(-1) = (-1)^{j}\},
\end{align*}
See \cref{prop:critical}. These are the critical values of twists of $L(\Pi,s)$ in the left and right halves of the critical strip respectively.   We write $\Crit(\Pi)$ for the union of these sets. For $(t, \eta) \in \Crit(\Pi)$, let  $e_\infty(\Pi_\infty \times \eta_\infty, t)$ be the modified Euler factor at $\infty$ of \cite[\S1]{coates89}, which is a product of a rational number with powers of $i$ and $\pi$ (see \cref{sec:rationality}).

\subsubsection*{Algebraicity} As a precursor to $p$-adic interpolation, we first consider an algebraicity result for $L$-values. Let $E$ denote the rationality field of $\Pi$. If $\eta$ is a Dirichlet character, let $E[\eta]$ denote the extension of $E$ obtained by the values of $\eta$. The following is \cite[Period Conjecture]{coates89}, a reformulation of the conjectures of \cite{Del79} better suited to $p$-adic interpolation:

\begin{conjectureintro}[Algebraicity Conjecture for $\GL_{2m+1}$]
	\label{conj:intro-alg}
	There exist complex periods $\Omega_\Pi^\pm \in \C^\times$ such that for all $(t,\eta) \in \Crit^\pm(\Pi)$, we have
	\begin{equation}
		\label{eq:deligne}
		e_\infty\left(\Pi_\infty \times \eta_\infty, t\right)\cdot \frac{L(\Pi\times\eta, t)}{\Omega_\Pi^\pm} \cdot G(\eta^{-1})^{(n \pm 1)/2} \in E[\eta],
	\end{equation}
	where $G(\eta)$ is the Gauss sum. Moreover, this ratio depends $\operatorname{Gal}(E[\eta] / E)$-equivariantly on $\eta$.
\end{conjectureintro}

Note that it suffices to prove the conjecture (for all $\Pi$) for one choice of the sign $\pm$, since the functional equation interchanges $\Crit^{\pm}(\Pi)$ and $\Crit^{\mp}(\Pi^\vee)$; see  \S\ref{sec:functional equation} below. A partial result towards this conjecture is known for $n = 3$, by work of Mahnkopf and Kasten--Schmidt (see \S\ref{sec:mahnkopf}). Our first main result is:

\begin{theoremintro}\label{thm:intro-alg}
 \cref{conj:intro-alg} holds for $n = 3$.
\end{theoremintro}

\subsubsection*{P-adic interpolation}

Now suppose $p$ is a prime.  The second conjecture we state builds on the algebraicity conjecture \ref{conj:intro-alg} by predicting the algebraic parts of $L$-values vary $p$-adic analytically as we deform $(\eta,j)$. It implies that these values satisfy remarkable $p$-adic congruences, akin to higher-dimensional analogues of the famous Kummer congruences for the Riemann zeta function.

Let $P_i$ be the block-upper-triangular parabolic subgroup with Levi subgroup $\GL_i \times \GL_{n-i}$. A \emph{$P_i$-refinement} is a choice of irreducible representation of $\GL_i \times \GL_{n-i}$ appearing in the Jacquet module $J_{P_i}(\Pi_p)$. We define in \cref{sec:ordinarity} below a notion of \emph{slope} for $P_i$-refinements, and we say $\Pi$ is \emph{$P_i$-nearly-ordinary} if it admits a $P_i$-refinement of the minimal possible slope. We are most interested in $i = m, m+1$.

Let $\Crit_p^\pm(\Pi) \subset \Crit^\pm(\Pi)$ be the subset of $(t, \eta)$ where $\eta$ has $p$-power conductor. For such $(t, \eta)$, let  $e_p(\Pi_p \times \eta_p, t) \in E[\eta, G(\eta)]$ denote the modified Euler factor at $p$ of \cite[\S2]{coates89} (see \cref{sec:coates--perrin-riou}). Let $L^{(p)}(\Pi\times\eta, s)$ be the $L$-function without its Euler factor at $p$. Then we have the following conjecture, which is (a generalisation of) the conjectures of Coates--Perrin-Riou and Panchishkin:

\begin{conjectureintro}[$p$-adic $L$-functions Conjecture for $\GL_{2m+1}$]\leavevmode
\label{conj:intro}
\begin{itemize}
	\item[(i)]	Suppose $\Pi$ is $P_m$-nearly-ordinary. Then there exists a $p$-adic measure $L_p^-(\Pi)$ on $\Zp^\times$ such that for all $(t,\eta) \in \Crit_p^-(\Pi)$, we have the interpolation
	\begin{equation}\label{eq:interpolation intro}
		\int_{\Zp^\times} \eta(x)^{-1} x^t \cdot \mathrm{d}L_p^-\left(\Pi\right)(x) = e_\infty\left(\Pi_\infty \times \eta_\infty, t\right)e_p\left(\Pi_p \times \eta_p, t\right)\cdot \frac{L^{(p)}(\Pi\times\eta, t)}{\Omega_\Pi^-},
	\end{equation}
 and $L_p^-(\Pi)$ vanishes on characters of sign $\ne \omega_\Pi(-1)$.

	\item[(ii)] Suppose $\Pi$ is $P_{m+1}$-nearly-ordinary. Then there exists a $p$-adic measure $L_p^+(\Pi)$ on $\Zp^\times$ such that for all $(t ,\eta) \in \Crit_p^+(\Pi)$, we have the interpolation
\[
		\int_{\Zp^\times} \eta(x)^{-1} x^t \cdot \mathrm{d}L_p^+\left(\Pi\right)(x) = e_\infty\left(\Pi_\infty \times \eta_\infty, t\right)e_p\left(\Pi_p \times \eta_p, t\right)\cdot \frac{L^{(p)}(\Pi\times\eta, t)}{\Omega_\Pi^+}
	\]
 and $L_p^+(\Pi)$ vanishes on characters of sign $\ne -\omega_{\Pi}(-1)$.
	\end{itemize}
\end{conjectureintro}

We call $L_p^{\pm}(\Pi)$ the \emph{left-half} and \emph{right-half} $p$-adic $L$-functions of $\Pi$. Each of these objects, if it exists, is uniquely  determined by the interpolation property in the theorem (since it is a measure). If $\Pi$ is nearly ordinary for both $P_m$ and $P_{m+1}$, then we can define a single $p$-adic $L$-function $L_p(\Pi) = L_p^+(\Pi) + L_p^-(\Pi)$ interpolating critical values in both halves of the critical strip; but the conditions of near-ordinarity at $P_m$ and $P_{m+1}$ are independent of each other, so it can occur that only one of these objects is well-defined.

In this paper, we prove:
\begin{theoremintro}\label{thm:intro}
	\cref{conj:intro} holds in full when $n=2m+1=3$.\qed
\end{theoremintro}

As with Theorem A, it suffices to prove Theorem B for one choice of the sign (and all $\Pi$). This is because $\Pi$ is $P_{m+1}$-nearly-ordinary if $\Pi^\vee$ is $P_m$-nearly-ordinary, and vice versa; and via the functional equation, we show in \S \ref{sec:functional equation} that part (i) of the conjecture for $\Pi$ is equivalent to part (ii) of the conjecture for $\Pi^\vee$. This also gives a $p$-adic functional equation relating $L_p^{\pm}(\Pi)$ and $L_p^{\mp}(\Pi^\vee)$.

\subsection{Mahnkopf's work on algebraicity}\label{sec:mahnkopf}

We now describe the starting-point for our constructions, which is Mahnkopf's demonstration of a weakened form of the Algebraicity Conjecture for $n = 3$. He uses the Rankin--Selberg integral for $\GL_3 \times \GL_2$: taken with $\Pi$ on $\GL_3$ and an Eisenstein series on $\GL_2$, this integral computes a product of (twisted) $L$-functions for $\Pi$.

In \cite{Mah98} Mahnkopf gave a cohomological interpretation of this integral, as we now sketch. We take $\Pi$ to have weight $\lambda = (a,0,-a)$, and let $V_\lambda$ be the $\GL_3$-representation of highest weight $\lambda$.  Note in particular now that
\begin{equation}\label{eq:crit pi intro}
	\Crit^-_p(\Pi) = \{(-j,\eta) : 0 \leq j \leq a, \ \mathrm{cond}(\eta) \mid p^\infty, \  \eta\omega_{\Pi}(-1) = (-1)^{j}\}.
\end{equation}
For $j \geq 0$, let $V_{(0,-j)}^{\GL_2}$ be the $\GL_2$-representation of highest weight $(x,y) \mapsto y^{-j}$. Let $Y^{\GL_3}(\cU)$ be the locally symmetric space for $\GL_3$ (of some neat level $\cU$), and let $Y^{\GL_2}_1(p^n)$ be the locally symmetric space (of level $\Gamma_1(p^n)$) for $\GL_2$.  For a Dirichlet character $\eta$ of conductor $p^n$, one has:
\begin{itemize}\setlength{\itemsep}{0pt}
	\item[--] a compactly supported Betti class $\phi_\varphi \in \hc{2}(Y^{\GL_3}(\cU), V_\lambda^\vee(\C))$ attached to any $\varphi\in\Pi\subf$,
	\item[--] and Harder's weight $j$ Eisenstein class $\mathrm{Eis}^{j,\eta} \in \h^1\big(Y^{\GL_2}_1(p^n),V_{(0,-j)}^{\GL_2}(\C)\big)$.
\end{itemize}
We have the following crucial \emph{branching law} (cf.\ \cite[Lem.\ 3.1]{Mah00}):
\begin{equation}\label{eq:branching}
	\text{\emph{If $(-j,\eta) \in \Crit_p^-(\Pi)$, then $V_{(0,-j)}^{\GL_2} \subset V_\lambda\big|_{\GL_2}$ with multiplicity one.}}
\end{equation}
By pushing forward $\mathrm{Eis}^{j,\eta}$ to $Y^{\GL_3}$, and using \eqref{eq:branching}, one can then define a pairing
\begin{equation}\label{eq:intro pairing}
	\langle-,-\rangle : \hc{2}(Y^{\GL_3}(\cU), V_\lambda^\vee) \times  \h^1\left(Y^{\GL_2}_1(p^n),V_{(0,-j)}^{\GL_2}\right) \to \C
\end{equation}
such that $\langle \phi_\varphi,\mathrm{Eis}^{j,\eta}\rangle$ is a Rankin--Selberg integral; and for appropriate $\varphi$, one of the $L$-values it computes is $L(\Pi\times\eta, -j)$. Mahnkopf and Kasten--Schmidt used this in \cite{Mah98,Mah00,KS13} to prove that for each fixed $j$, \cref{conj:intro-alg} holds up to replacing $e_\infty(\Pi_\infty \times \eta_\infty, -j)$ with some inexplicit, but non-zero, $\widetilde{e}_\infty(\Pi_\infty \times \eta_\infty, -j) \in \C^\times$.

The new result we prove here, in order to complete the proof of \cref{thm:intro-alg}, is that the inexplicit quantity $\widetilde{e}_{\infty}(\Pi_\infty \times \eta_\infty, -j)$ in fact coincides with the modified archimedean factor $e_\infty(\Pi_\infty \times \eta_\infty, -j)$ of Coates--Perrin-Riou. Our proof of this will in fact be bound up with the proof of \cref{thm:intro}, which we now sketch.

\subsection{\texorpdfstring{$p$-adic interpolation}{P-adic interpolation}}
\label{sec:p-adic interpolation}
 In light of the equivalence between parts (i) and (ii) of the $p$-adic $L$-functions conjecture, it suffices to focus on part (i), and consider $P_1$-nearly-ordinary refinements and twists in $\Crit^-_p(\Pi)$.

Mahnkopf's work expresses $L(\Pi\times\eta,-j)$ as an integral involving the pushforward to $\GL_3$ of the Eisenstein class $\mathrm{Eis}^{j,\eta}$. Accordingly, for $p$-adic interpolation of $L$-values the aim is clear; we must:
\begin{enumerate}[(I)]\setlength{\itemsep}{0pt}
	\item\label{introsketch1} $p$-adically interpolate the pushforward of $\mathrm{Eis}^{j,\eta}$ to $Y^{\GL_3}$ as $\eta$ (hence $n$) varies;
	\item\label{introsketch2} $p$-adically interpolate the pushforward of $\mathrm{Eis}^{j,\eta}$ to $Y^{\GL_3}$ as $j$ varies.
\end{enumerate}
This strategy was already known to Mahnkopf over 20 years ago, and work towards (\ref{introsketch1}) was the focus of \cite{Mah00} and \cite{Ger15}. However, these works did not ultimately lead to constructions of $p$-adic $L$-functions, since their methods did not give sufficient control on the denominators of Eisenstein classes to construct a uniquely-determined $p$-adic $L$-function (see \S\ref{sec:literature} below).

In this paper, we solve (\ref{introsketch1}) and (\ref{introsketch2}). Our key innovations are:
\begin{itemize}\setlength{\itemsep}{0pt}
\item[--] the use of Be\u{\i}linson's \emph{motivic} Eisenstein classes $\mathrm{Eis}^j_{\mathrm{mot},\Phi_{\mathrm{f},n}}$ (see \cref{sec:Betti-Eisenstein}), and
\item[--] the  systematic variation of a \emph{third} parameter:  the definition of the pairing \eqref{eq:intro pairing}.
\end{itemize}
The motivic classes are \emph{norm compatible}, so their Betti realisations -- the \emph{Betti--Eisenstein classes} $\mathrm{Eis}^j_{\Phi_{\mathrm{f},n}}$ -- form a tower as $n$ varies. Kings has shown they interpolate well as $j$ varies (see \cref{sec:Eisenstein interpolation}). Whilst these classes are not integral, crucially for an auxiliary integer $c$ one can define `$c$-smoothed' modifications ${}_c \mathrm{Eis}_{\Phi_{\mathrm{f},n}}^j$, that \emph{are} integral and which remain norm-compatible (see \cref{sec:Eisenstein integrality}).

By varying the pairing, we build a machine that ports these compatibility and integrality properties from $\GL_2$ to $\GL_3$. To make this more precise, we elaborate on our approach, which is somewhat different to \cite{Mah98} from the outset. Let $H\defeq \GL_2\times \GL_1$, and let $\iota : H \hookrightarrow \GL_3$ be the map $(\gamma,z) \mapsto \smallmatrd{\gamma}{}{}{z}$. Our construction involves pulling back Eisenstein classes under the natural projection $H \to \GL_2$, applying the branching law \eqref{eq:branching}, pushing forward under $\iota : H \to \GL_3$, and then twisting by a certain operator $u\tau^n \in \GL_3(\Qp)$, where $u = \smallthreemat{1}{0}{1}{}{1}{0}{}{}{1}\smallthreemat{1}{}{}{}{}{1}{}{-1}{}$ and $\tau = \mathrm{diag}(p,1,1)$. Varying the parameters in this process varies the pairing \eqref{eq:intro pairing}. Ultimately, by `spreading out' this twisted pushforward map over a group algebra, we construct a machine that is, at level $n$, a map
\begin{align*}
\h^1\left(Y^{\GL_2}_1(p^n),V_{(0,-j)}^{\GL_2}(\Zp)\right) &\to \h^3\Big(Y^{\GL_3}(\cU),V_\lambda(\Zp)\Big) \otimes \Zp[(\Z/p^n)^\times],\\
{}_c\mathrm{Eis}^{j}_{\Phi_{\mathrm{f},n}} &\mapsto {}_c\xi_n^{[j]}.
\end{align*}
Note that since $Y^{\GL_3}$ is 5-dimensional, $\h^3(Y^{\GL_3}(\cU),V_\lambda)$ is Poincar\'e dual to $\hc{2}(Y^{\GL_3}(\cU),V_\lambda^\vee)$.

Importantly, $\overline{P}_1(\Zp) \cdot u \cdot H(\Zp)$ is open and dense in $\GL_3(\Zp)$, where we recall the parabolic $P_1 \subset \GL_3$ from above. This provides a link with the theory of \emph{spherical varieties}, via $\GL_3/H$, as explored in \cite{loeffler-parabolics}. Exploiting this link, in \cref{thm:norm relation} we show that for each fixed $0 \leq j \leq a$ as in \eqref{eq:crit pi intro}, the machine preserves norm-compatibility, but not on the nose: rather, for the Hecke operator $U_{p,1}$ at $P_1$, we prove that
\[
\mathrm{Norm}^{n+1}_n\big({}_c\xi_{n+1}^{[j]}\big) = U_{p,1} \big( {}_c\xi_n^{[j]}\big).
\]

Now let $\varphi \in \Pi$ be a suitable $U_{p,1}$-eigenform with an associated class $\phi_\varphi \in \hc{2}(Y^{\GL_3}(\cU),V_\lambda^\vee(\cO))$, where $\cO$ is a finite extension of $\Zp$. Let $\alpha_{p,1}$ be the $U_{p,1}$-eigenvalue of $\varphi$.  Suppose $\varphi$ is $P_1$-nearly-ordinary, i.e. $\alpha_{p,1} \in \cO^\times$; then for each fixed $j$, pairing $\alpha_{p,1}^{-n}\phi_\varphi$ with ${}_c\xi_n^{[j]}$ (under Poincar\'{e} duality \eqref{eq:poincare}) yields an element ${}_c\Xi^{[j]} \in \varprojlim \cO[(\Z/p^n)^\times] \cong \cO[\![\Zp^\times]\!]$. In \cref{sec:measure to integral}, we show that if $(-j,\eta) \in \Crit^-_p(\Pi)$, then integrating this measure against $\eta$ on $\Zp^\times$ interpolates a relevant Rankin--Selberg integral. In \cref{sec:local zeta integrals,sec:p-adic L-function} we show that for appropriate finite test data, this integral computes (a $c$-smoothed multiple of) the right-hand side of \eqref{eq:interpolation intro}, up to the precise factor at $\infty$. Taking this test data as input to our machine, and renormalising, yields a family of measures $\Xi^{[j]}$ that do not depend on the smoothing integer $c$ (see \S\ref{sec:getting rid of c}), such that $\Xi^{[j]}$ interpolates multiples of $L^{(p)}(\Pi\times\eta,-j)$ as $\eta$ varies. This then solves (\ref{introsketch1}).

It remains to prove (\ref{introsketch2}), the compatibility between the $a+1$ measures $\{\Xi^{[j]}\}_{j=0}^a$. This involves $p$-adically interpolating the branching law \eqref{eq:branching} as $j$ varies. Again we exploit the connection to spherical varieties: our set-up puts us in the framework of \cite{spherical_II}, which we apply to prove
\[
\int_{\Zp^\times} x^{j}f(x) \cdot\mathrm{d}\Xi^{[0]}(x)= \int_{\Zp^\times} f(x) \cdot \mathrm{d}\Xi^{[j]}(x),
\]
i.e.\ the $ \Xi^{[j]}$ are Tate twists of each other.  Thus the measure $\Xi^{[0]}$ on $\Zp^\times$ satisfies
\[
\int_{\Zp^\times} \eta(x)x^j \cdot  d\Xi^{[0]}(x) = 	\int_{\Zp^\times} \eta(x) \cdot  \Xi^{[j]}=  (*) \cdot L^{(p)}(\Pi\times\eta, -j),
\]
where the term $(*)$ is a product of a global period and certain local zeta integrals. Away from infinity, for a good choice of test data, these local zeta factors are explicitly computed in \cref{sec:local zeta integrals}, and shown to compute the correct interpolation factors. We define $L_p^-(\Pi)$ by precomposing $\Xi^{[0]}$ with the involution $\chi \mapsto \chi^{-1}$ on characters of $\Zp^\times$.

This leaves the local zeta integral at $\infty$. By definition, for each $j$ this Rankin--Selberg integral is the inexplicit factor $\widetilde{e}_\infty(\Pi_\infty \times \eta_\infty, -j)$ from \cite{Mah00}, which is non-zero by \cite{KS13,Sun17}. In particular, $L_p^-(\Pi)$ satisfies \eqref{eq:interpolation intro} up to replacing $e_\infty(\Pi_\infty \times \eta_\infty,-j)$ with $\widetilde{e}_\infty(\Pi_\infty \times \eta_\infty, -j)$.

\subsection{\texorpdfstring{The factor at $\infty$}{The factor at infinity}}\label{sec:intro alg}
To complete the proof of \cref{conj:intro}(i), it remains to prove $\widetilde{e}_\infty(\Pi_\infty \times \eta_\infty,-j) = e_\infty(\Pi_\infty \times \eta_\infty,-j)$, i.e.\ that the factor at infinity has the expected form.

By construction, the factor $\widetilde{e}_\infty(\Pi_\infty \times \eta_\infty,-j)$ depends only on $j$ and $\Pi_\infty$, and in turn $\Pi_\infty$ depends only on $\lambda$ and $\omega_\Pi$. Whilst we do not evaluate this integral directly, we \emph{do} know that it is non-vanishing (for all $\Pi_\infty$ and $j$) by \cite{KS13}. To get the correct interpolation factor $e_\infty$, we exploit the fact that \cref{thm:intro} is already known in full when $\Pi$ is a (twist of a) symmetric square from $\GL_2$. In this case, we compute the ratio of our measure $L_p^-(\Pi)$ with the symmetric square $p$-adic $L$-function, and show this ratio is constant. By (\ref{introsketch2}), we thus deduce that the $\widetilde{e}_\infty(\Pi_\infty \times \eta_\infty,-j)$'s satisfy the expected compatibility as $j$ varies. Up to a global renormalisation, we thus deduce $\widetilde{e}_\infty(\Pi_\infty \times \eta_\infty,-j) = e_\infty(\Pi_\infty \times \eta_\infty,-j)$ for all $-j$ in the left-half 
of the critical strip. Combining with \cref{sec:p-adic interpolation}, we deduce $L_p^-(\Pi)$ satisfies \eqref{eq:interpolation intro}, completing the proof of Theorems \ref{thm:intro-alg} and \ref{thm:intro} for sign ``$-$''. In \S\ref{sec:functional equation} we use the functional equation to show that this implies the sign ``$+$'' case as well, thus completing the proof.

\subsection{Relation to previous literature}\label{sec:literature}

The most notable partial results towards \cref{thm:intro} came in works of Mahnkopf \cite{Mah00} (for trivial weight) and Geroldinger \cite{Ger15} (cohomological weight), who gave constructions of algebraic $p$-adic distributions satisfying a partial form of the interpolation \eqref{eq:interpolation intro}. More precisely, for each critical integer $j$ for $\Pi$, they constructed a separate algebraic distribution $L_p^j(\Pi)$ (denoted $\mu_{\Pi,j}$ in \emph{op.\ cit}.) satisfying a formula similar to \eqref{eq:interpolation intro} for that fixed $j$ and all sufficiently ramified $\eta$.

However, using their methods, they were not able to sufficiently control the denominators of their distributions, or obtain any kind of compatibility for varying $j$. In particular, their distributions only defined functions on the set of \emph{locally-algebraic} characters of $\Zp^\times$ of degree $\le a$; they could not prove that their distributions had any uniquely-determined extension from this discrete set to the whole weight space of continuous characters of $\Zp^\times$. Thus their methods did not give a $p$-adic $L$-function in any usual sense (notwithstanding the titles of these papers). This is the fundamental problem we overcome in the present work: our methods give uniform control over the denominators of Eisenstein classes, allowing us to construct a uniquely-determined, bounded measure interpolating all critical values.

(Whilst our $p$-adic methods differ totally from those of \cite{Mah00} and \cite{Ger15}, these references do contain excellent accounts of a number of the automorphic aspects we require, particularly the local zeta integral at infinity.)

In the special case when $\Pi$ is a symmetric square lift of a RACAR of $\GL_2(\A_{\Q})$, a different construction of the $p$-adic $L$-function is possible, using Rankin--Selberg convolutions of the $\GL_2$ cusp form with half-integer weight theta-series; see e.g.~\cite{Sch88,Hid90,DD97}). However, these methods are completely specific to the case of symmetric-square lifts.

\subsection{Possible generalisations} Finally, we comment on possible future generalisations of the method developed here. Most immediately, an appropriate modification of our method should give a generalisation to non-critical slope RACARs of $\GL_3(\A_{\Q})$, which is being pursued by Dimitrakopoulou--Rockwood. It should also be possible to generalise these results to RACARs of $\GL_3(\A_F)$, where $F$ is a totally real field, by replacing Beilinson's motivic Eisenstein classes with the Hilbert--Eisenstein classes of Beilinson--Kings--Levin \cite{BKL14}. Proposition 3.5.5 \emph{op.\ cit}.\ yields the Hilbert analogue of \S\ref{sec:Eisenstein interpolation} of the present paper.

In the longer term, we hope to apply similar methods to construct $p$-adic $L$-functions for RACARs of $\GL_n$, over $\Q$ or totally real fields. For this, one requires a supply of well-behaved Eisenstein classes for $\GL_{n-1}$. The direct generalisation of our method is not obvious: for $n>3$, the group $\GL_{n-1}$ does not give rise to a Shimura datum, and it is not clear what the appropriate generalisation of \emph{motivic} Eisenstein classes should be. However, our methods only require the \emph{Betti realisation} of motivic classes, which are purely topological. It is reasonable to ask if there are norm-compatible families of integral Betti--Eisenstein classes on $\GL_{n-1}$, acting as a `Betti shadow' of some deeper, but presently mysterious, motivic structure; and with our methods, such classes should give a $p$-adic $L$-function for $\GL_n$.  In this direction, a direct construction of such Betti--Eisenstein classes for $\GL_2$ over totally real fields was given in \cite{NamAsai}.

\subsection*{Acknowledgements}
The authors would like to thank Chris Skinner, who introduced us to the argument in \cref{sec:interpolation 2} for computing ratios of local zeta integrals at infinity, and Harald Grobner and G\"unter Harder, who shared their expertise on \cref{conj:intro-alg}. We also thank Sarah Zerbes for numerous enlightening discussions, and Shih-Yu Chen, Andy Graham, and the referee for valuable comments on earlier drafts.

%
%
\section{Preliminaries: automorphic representations}

\subsection{Characters}
\label{sect:dirichlet}

If $\chi: (\Z / N\Z)^\times \to \C^\times$ is a Dirichlet character, there is a unique finite-order Hecke character $\chihat : \Q^\times \backslash \A^\times / \R^\times_{> 0} \to \C^\times$ such that $\chihat(\varpi_\ell) = \chi(\ell)$ for all primes $\ell \nmid N$, where $\varpi_\ell$ is a uniformiser at $\ell$; conversely, every finite-order Hecke character is $\widehat\chi$ for a unique primitive Dirichlet character $\chi$.

Note that the restriction of $\chihat$ to $\widehat{\Z}^\times \subset \A\subf^\times$ is the \emph{inverse} of the composition $\widehat{\Z}^\times \twoheadrightarrow (\Z / N\Z)^\times \xrightarrow{\ \chi\ } \C^\times$. Moreover, if $\ell \mid N$, then $\widehat{\chi}_\ell(\varpi_\ell) = \chi^{(\ell)}(\ell)$, where we have written $\chi = \chi_\ell \chi^{(\ell)}$ a product of characters of $\ell$-power and prime-to-$\ell$ conductor. Given a representation $\Pi$ of $\GL_3(\A)$, we write $\Pi \times \chi$ for the representation $\Pi \times [\chihat \circ \det]$.

We let $\psi$ be the unique character of the additive group $\A / \Q$ such that $\psi(x) = \exp(-2\pi i x)$ for $x \in \R$. Note that the restriction of $\psi$ to $\Q_\ell$, for a finite prime $\ell$, maps $1/\ell^n$ to $\exp(2\pi i / \ell^n)$ for all $n \in \Z$.   Let $\varepsilon(\chihat_\ell,\psi_\ell)$ be the local $\varepsilon$-factor (with respect to the unramified Haar measure $\mathrm{d}x$), as defined in \cite{Tat79} for example. Then we have
\[
\prod_{\ell \mid N} \varepsilon(\chihat_\ell, \psi_\ell) = G(\chi) \defeq \sum_{a \in (\Z/ N\Z)^\times} \chi(a) \exp(2\pi i a / N)
\]
for $\chi$ primitive of conductor $N$. (We do not include the Archimedean root number here.)

\subsection{Algebraic representations}\label{sec:alg rep}

If $J$ is a split reductive group, and $\mu$ a $B_J$-dominant weight for some choice of Borel $B_J \subset J$, then we write $V_\mu^J$ for the unique irreducible algebraic representation of $J$ of highest weight $\mu$. When $J = \GL_3$, we will drop the $J$ and just write $V_\mu$.

Let $T$ denote the diagonal torus of $\GL_3$ (identified with $(\mathbb{G}_m)^3$ in the obvious fashion), and $B = T\ltimes N$ the upper-triangular Borel subgroup. The $B$-dominant weights for $\GL_3$ are of the form $\lambda = (a,b,c) \in \Z^3$, with $a \ge b \ge c$. If $E$ is any $\Q$-algebra, then we can realise $V_\lambda(E)$ as a space of polynomial functions on $\GL_3$, via
\begin{align*}
V_\lambda(E) = \{ f : \GL_3(E)& \to E \ : \ f\text{ algebraic, }\\
& f(n^-tg) = \lambda(t)f(g) \ \forall n^- \in N^-(E), t \in T(E)\},
\end{align*}
where $N^-(E)$ is the unipotent radical of the opposite Borel. We get a natural left-action of $\gamma \in \GL_3(E)$ on $V_\lambda(E)$ by $(\gamma \cdot f)(g) = f(g\gamma)$. Let $V_\lambda^\vee(E)$ denote the $E$-linear dual, with the dual left action $(\gamma \cdot \mu)(f) \defeq \mu(\gamma^{-1} \cdot f)$.

A weight $\lambda = (a,b,c)$ is \emph{pure} if $a + c = 2b$. These are precisely the weights such that $V_{\lambda}^\vee$ is isomorphic to a twist of $V_{\lambda}$.

\subsection{Automorphic representations for \texorpdfstring{$\GL_3$}{GL3}}

We recall some standard facts about automorphic representations of $\GL_3$ (for a fuller account, see \cite[\S3.1]{Mah05}, summarising \cite[\S3]{Clo90}). Let $\Pi$ be a cuspidal automorphic representation of $\GL_3(\A)$, with central character $\omega_{\Pi}$. We identify $\Pi$ with its realisation in $L^2_0(\GL_3(\Q)\backslash \GL_3(\A))$, considering any $\varphi \in \Pi$ as a function on $\GL_3(\A)$.

Let $\mathfrak{gl}_3 = \mathrm{Lie}(\GL_3)$. Recall that the centre of the universal enveloping algebra at $\infty$ acts on $\Pi_\infty$ via a ring homomorphism $Z(U(\mathfrak{gl}_3)_{\C}) \to \C$ (the \emph{infinitesimal character} of $\Pi_\infty$). We say $\Pi$ is \emph{regular algebraic of weight $\lambda$} if $\Pi_\infty$ has the same infinitesimal character as the irreducible algebraic representation $V_\lambda$, for some dominant integral weight $\lambda$. (This determines $\lambda$ uniquely.) We use the abbreviation ``RACAR'' for ``regular algebraic cuspidal automorphic representation''.

\subsubsection{Whittaker models}\label{sec:whittaker}
We denote the standard Whittaker model of $\Pi$ by
\begin{align*}
\cW_\psi : \Pi &\isorightarrow \cW_\psi(\Pi) \subset \mathrm{Ind}_{N(\A)}^{\GL_3(\A)}\psi,\\
\varphi &\mapsto W_\varphi(g) \defeq \int_{N(\Q)\backslash N(\A)}\varphi(ng)\psi^{-1}(n) \mathrm{d}n.
\end{align*}
where $\psi\smallthreemat{1}{x}{\star}{}{1}{y}{}{}{1} \defeq \psi(x+y)$. As $\psi$ is fixed throughout, we will often drop it from notation. We denote cusp forms in $\Pi$ by $\varphi$, and elements of $\cW_\psi(\Pi)$ by $W$.

\subsubsection{Cohomological automorphic representations}\label{sec:cohomological}

Let $K_{J,\infty}$ be a maximal compact subgroup of $\GL_3(\R)$,  $Z_{\GL_3,\infty} = Z_{\GL_3}(\R)$, and $(-)^\circ$ denote the identity component. Write $K_{3,\infty}^\circ =$ $K_{\GL_3,\infty}^\circ Z_{\GL_3,\infty}^\circ$ for shorthand. We say $\Pi$ is \emph{cohomological} with coefficients in an algebraic representation $W$ if
\begin{equation}\label{eq:infinite cohomology}
\h^\bullet\big(\mathfrak{gl}_3,K_{3,\infty}^\circ; \Pi_\infty \otimes W(\C) \big) \neq 0.
\end{equation}

\begin{proposition}\emph{\cite[Lem.\ 4.9]{Clo90}}.
If $\Pi$ is a RACAR of weight $\lambda$, then it is cohomological with coefficients in $W = V_{\lambda}^\vee$ (and this is the unique irreducible representation for which $\Pi$ is cohomological). Moreover, $\lambda$ is necessarily pure (as in \cref{sec:alg rep}).\qed
\end{proposition}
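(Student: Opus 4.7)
The plan is to prove uniqueness, existence, and purity in turn, using the theory of $(\mathfrak{g},K)$-cohomology together with the classification of cohomological unitary representations of $\GL_3(\R)$.

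For \emph{uniqueness}, I would invoke Wigner's lemma (Borel--Wallach, I.4.1): the $(\mathfrak{gl}_3, K_{3,\infty}^\circ)$-cohomology of $\Pi_\infty \otimes W(\C)$ vanishes unless the infinitesimal characters of $\Pi_\infty$ and $W^\vee$ agree. The regular-algebraic hypothesis forces $\Pi_\infty$ to have the infinitesimal character of $V_\lambda$; Harish-Chandra's parametrisation of infinitesimal characters by Weyl orbits of $\mu + \rho$ then shows that the only dominant weight $\mu$ with $\chi_{V_\mu} = \chi_{V_\lambda}$ is $\mu = \lambda$. Hence $W \cong V_\lambda^\vee$ is the unique possibility.

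For \emph{existence}, I would appeal to the Vogan--Zuckerman (and Delorme) classification of irreducible unitary representations of $\GL_3(\R)$ with non-vanishing $(\mathfrak{g},K)$-cohomology. Since $\Pi$ is cuspidal, $\Pi_\infty$ is unitary and, via its Whittaker model, generic. For $\GL_3(\R)$, the generic unitary representations with a fixed regular algebraic infinitesimal character form a short list, realised as fully induced principal series from the $(2,1)$-parabolic $P_1$, of the form $\mathrm{Ind}_{P_1(\R)}^{\GL_3(\R)}(\sigma \boxtimes \chi)$, where $\sigma$ is essentially square-integrable on $\GL_2(\R)$ and $\chi$ a character of $\GL_1(\R)$. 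A direct computation of $(\mathfrak{g},K)$-cohomology for this induced family (e.g.\ Borel--Wallach III.3.3, applied via the K\"unneth formula on the Levi) then produces a non-zero class in $\mathrm{H}^\bullet(\mathfrak{gl}_3, K_{3,\infty}^\circ; \Pi_\infty \otimes V_\lambda^\vee(\C))$.

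For \emph{purity}: matching the infinitesimal character of the induction against $\lambda + \rho$ determines the exponents of $\sigma$ and $\chi$ in terms of $\lambda = (a,b,c)$; after the $\rho$-shift one finds that the infinitesimal character is $\{(a+c)/2 + (a-c)/2,\ (a+c)/2 - (a-c)/2,\ b\} = \{a, c, b\}$. Unitarity of $\Pi_\infty$ -- forced by the cuspidality of $\Pi$ -- requires the central exponent of $\sigma$ and the exponent of $\chi$ to coincide up to a common purely imaginary twist; combined with the integrality imposed by the algebraic infinitesimal character, this collapse forces $(a+c)/2 = b$, i.e.\ purity. The main obstacle is precisely this purity step: existence and uniqueness follow from rather general machinery, but purity requires carefully tracking the interplay between unitarity (via the cuspidal central character) and the explicit shape of the generic cohomological representations of $\GL_3(\R)$, neither condition being sufficient on its own.
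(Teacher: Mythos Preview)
The paper gives no proof here: the statement carries a q.e.d.\ symbol and is quoted directly from \cite[Lem.~4.9]{Clo90}, used as a black box. Your outline is a faithful reconstruction of the argument behind that lemma --- Wigner's lemma for uniqueness, the Vogan--Zuckerman/Delorme classification plus genericity for existence, and the explicit parameters of the resulting induced representation for purity --- so there is nothing on the paper's side to compare against.

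Two small points. First, your infinitesimal-character match is off by $\rho$: the Harish-Chandra parameter of $V_\lambda$ is $\lambda+\rho=(a+1,b,c-1)$, not $(a,b,c)$, so one finds $k=a-c+3$ rather than $a-c+1$; this does not affect your conclusion, since one still obtains $s=(a+c)/2$ and $t=b$ for the central exponent of $\sigma$ and the exponent of $\chi$. Second, the purity mechanism is more cleanly stated via essential temperedness. Cuspidality makes $\Pi_\infty$ generic and unitarizable; for $\GL_3(\R)$, the generic unitarizable representations with regular integral infinitesimal character are exactly the essentially tempered ones (the Borel complementary series have exponents strictly between $0$ and $\tfrac12$, hence non-integral, and by Tadi\'c's classification there are no others). ``Essentially tempered'' means there exists a real $r$ with both $s-r$ and $t-r$ purely imaginary; since $s$ and $t$ are already real, this forces $s=t$, i.e.\ $(a+c)/2=b$. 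Your phrase ``coincide up to a common purely imaginary twist'' is pointing at exactly this, though on first reading it can be misparsed. (Notational aside: in the paper's conventions the parabolic with Levi $\GL_2\times\GL_1$ is $P_2$, not $P_1$.)
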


This cohomology is then supported in degrees $2$ and $3$ \cite[Lem.\ 3.14]{Clo90}, and in each of these degrees \eqref{eq:infinite cohomology} is necessarily one-dimensional \cite[(3.2),(3.4)]{Mah05}. We will consider throughout only the lowest degree $i= 2$; exactly as in \cite[\S3.1]{Mah00} (where it is denoted $\omega_\infty$) we choose a generator
\begin{equation}\label{eq:omega infinity}
\zeta_\infty \in \h^2\big(\mathfrak{gl}_3,K_{3,\infty}^\circ; \Pi_\infty \otimes V_{\lambda}^{\vee}(\C) \big).
\end{equation}

\begin{convention}
Let $\Pi$ be a RACAR of weight $\lambda$. As in \cite[\S1]{Mah00} (noting Mahnkopf's $l_0$ is our $2a+2$), without loss of generality we may (and always will) normalise so that $b = 0$, so (by purity) $\lambda = (a,0,-a)$ for some $a \geq 0$. In this case, we see that
\begin{equation}\label{eq:Pi_infty}
	\Pi_\infty \cong \mathrm{Ind}_{P_2(\R)}^{\GL_3(\R)} (D_{2a+3}, \mathrm{id}) \quad\text{or}\quad \mathrm{Ind}_{P_2(\R)}^{\GL_3(\R)} (D_{2a+3}, \mathrm{sgn}),
\end{equation}
where $P_2$ is the parabolic with Levi $\GL_2 \times \GL_1$, $D_{2a+3}$ is the discrete series representation of $\GL_2(\R)$ of lowest weight $2a + 3$, and $\mathrm{sgn}$ is the sign character. In particular, this implies that its central character $\omegahat_{\Pi}$ has finite order (i.e.~it is the adelic character associated to a Dirichlet character $\omega_\Pi$), and hence $\Pi$ is unitary.
\end{convention}

\begin{remark}
If $\omega_{\Pi}$ is \emph{odd}, then $\Pi_\infty \cong \mathrm{Ind}_{P_2(\R)}^{\GL_3(\R)} (D_{2a+3}, \mathrm{id})$, and $\Pi_\infty$ is the twist of this by $\mathrm{sgn}$ when $\omega_{\Pi}$ is even. In \cite{Mah00}, only the case of $\omega_{\Pi}$ odd is considered, but we allow both signs here.
\end{remark}

\subsubsection{Self-duality}

We say $\Pi$ is \emph{self-dual} if $\Pi^\vee \cong \Pi$, and more generally \emph{essentially self-dual} if $\Pi$ is isomorphic to a twist of $\Pi^\vee$. A theorem of Ramakrishnan \cite{Ram14} shows that if $\Pi$ is an essentially self-dual RACAR of $\GL(3)$, then there exists a non-CM-type cuspidal modular newform $f$ of weight $a + 2$, and a character $\nu$, such that $\Pi = \operatorname{Sym}^2(f) \otimes \nu$.

\begin{remark}
Ash and Pollack have conjectured \cite{AP08} that all level 1 RACARs of $\GL_3$ are self-dual, and arise as symmetric squares of level 1 cuspidal eigenforms for $\GL_2$. Examples which are not essentially self-dual do exist in higher levels; see the tables of \cite{GKTV97} for examples.
\end{remark}

\subsection{\texorpdfstring{The $L$-function of $\Pi$}{The L-function of Pi}}
\label{sec:L-function}

We let $L(\Pi, s) = \prod_{\ell < \infty} L(\Pi_\ell, s)$ denote the standard $L$-function of $\Pi$ (\emph{without} its Archimedean $\Gamma$-factors). We use the analytic normalisations here, so the Euler product defining $L(\Pi, s)$ converges for $\Re(s) > 1$.

Since $\Pi$ is cohomological, it is $C$-algebraic in the sense of \cite{BG14}, i.e.~there exists a number field $E$ such that $\Pif$ is definable as an $E$-linear representation. Since half the sum of the positive roots is in the weight lattice for $\GL_3$, it is also $L$-algebraic: that is, if $E$ is any number field over which $\Pif$ is definable, then for primes $\ell$ such that $\Pi_\ell$ is unramified, we have
\[
L(\Pi_\ell, s) = P_\ell\left(\ell^{-s}\right)^{-1}, \qquad P_\ell(X) = (1 - \alpha_\ell X)(1 - \beta_\ell X)(1 - \gamma_\ell X) \in E[X],
\]
where $\alpha_\ell, \beta_\ell, \gamma_\ell$ are units outside $\ell$, and have valuation $\ge -1-a$ at $\ell$. If $\ell$ is a ramified prime, then we still have $L(\Pi_\ell, s) = P_\ell(\ell^{-s})$ for some polynomial $P_\ell \in 1 + X E[X]$, but of degree $< 3$.

As $\Pi_\infty$ is given by $(2a+2)\oplus(\pm,0)$ in the notation of \cite[\S3]{Kna94}, at infinity we have
\[
L_\infty(\Pi_\infty \times \eta_\infty, s) = \Gamma_\R(s + 1 - \kappa) \Gamma_{\C}(s + a + 1), \qquad \kappa = \begin{cases}0 & \omega_\Pi\eta\text{ is even}\\1 & \omega_\Pi\eta \text{ is odd,}\end{cases}
\]
where $\Gamma_{\R}(s) = (2\pi)^{-s/2}\Gamma(s/2)$, and $\Gamma_{\C}(s) \defeq 2(2\pi)^{-s}\Gamma(s)$.
\begin{proposition}
\label{prop:critical}
Let $\eta$ be a Dirichlet character. Then the critical values of $L(\Pi \times \eta, s)$ are at $s = t$ for integers $t$ satisfying either
\[ \{ -a \le t \le 0 \text{ and } (-1)^{t} = \omega_{\Pi}\eta(-1) \}\]
or
\[ \{ 1 \le t \le 1+a \text{ and } (-1)^{t} = -\omega_{\Pi}\eta(-1) \}.\]
\end{proposition}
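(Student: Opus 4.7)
I would deduce this from Deligne's criterion: an integer $s=j$ is critical for $L(\Pi\times\eta,s)$ iff neither $L_\infty(\Pi\times\eta,s)$ nor $L_\infty((\Pi\times\eta)^\vee,1-s)$ has a pole at $s=j$. Given the explicit form of $L_\infty(\Pi_\infty,s)$ recalled just above the statement, the task reduces to a bookkeeping exercise with the poles of $\Gamma_\R$ and $\Gamma_\C$.

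First, I would compute $L_\infty(\Pi\times\eta,s)$ by analysing how twisting by $\eta_\infty$ affects the Langlands parameter $\Pi_\infty = \mathrm{Ind}_{P_2(\R)}^{\GL_3(\R)}(D_{2a+3},\epsilon)$ with $\epsilon\in\{\mathrm{id},\mathrm{sgn}\}$. Since $D_{2a+3}\otimes\eta_\infty\cong D_{2a+3}$ and $\epsilon\cdot\eta_\infty$ is the one-dimensional factor of the twist, one obtains
\[
L_\infty(\Pi\times\eta,s)=\Gamma_\R(s+1-\kappa')\,\Gamma_\C(s+a+1),
\]
where $\kappa'\in\{0,1\}$ is defined by $(-1)^{\kappa'}=\omega_\Pi\eta(-1)$: that is, $\kappa'=0$ when $\omega_\Pi\eta$ is even and $\kappa'=1$ when it is odd. (The central character of $\Pi\times\eta$ is $\omega_\Pi\eta^3$, which at $-1$ equals $\omega_\Pi(-1)\eta(-1)$, so this extends the formula already stated for $\Pi$.) Next, since $(\Pi\times\eta)^\vee = \Pi^\vee\times\eta^{-1}$ has central character $\omega_\Pi^{-1}\eta^{-3}$, of the same parity at $-1$, the dual archimedean factor has the same $\kappa'$:
\[
L_\infty((\Pi\times\eta)^\vee,1-s)=\Gamma_\R(2-s-\kappa')\,\Gamma_\C(a+2-s).
\]

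Then I would read off the pole loci. The factor $\Gamma_\C(s+a+1)$ has poles exactly at integers $s\le -a-1$, and $\Gamma_\C(a+2-s)$ at integers $s\ge a+2$, giving the outer range condition $-a\le j\le a+1$. The factor $\Gamma_\R(s+1-\kappa')$ has poles at $s\in\{\kappa'-1,\kappa'-3,\dots\}$, i.e.\ at $s\le\kappa'-1$ with $s\equiv \kappa'-1\pmod 2$; dually $\Gamma_\R(2-s-\kappa')$ has poles at $s\ge 2-\kappa'$ with $s\equiv\kappa'\pmod 2$. Intersecting the non-pole conditions and splitting at the centre, one sees that $s=j$ is critical iff either
\begin{itemize}
\item $-a\le j\le 0$ and $j\equiv\kappa'\pmod 2$, i.e.\ $(-1)^j=(-1)^{\kappa'}=\omega_\Pi\eta(-1)$; or
\item $1\le j\le a+1$ and $j\not\equiv \kappa'\pmod 2$, i.e.\ $(-1)^j=-\omega_\Pi\eta(-1)$.
\end{itemize}
This is exactly the statement of the proposition.

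The substantive step is the first one, namely identifying the archimedean $L$-factor of the twist and verifying that the parity $\kappa'$ behaves as claimed; the rest is a purely combinatorial pole-count using the standard locations of the poles of $\Gamma_\R$ and $\Gamma_\C$. The only mild obstacle is ensuring that $\kappa'$ is computed correctly in both the untwisted and dual cases simultaneously, which follows cleanly because the central character parity is invariant under inversion.
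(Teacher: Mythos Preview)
Your argument is correct and is the standard derivation via Deligne's criterion. The paper does not actually supply a proof of this proposition: it is stated as a direct consequence of the explicit formula for $L_\infty(\Pi_\infty,s)=\Gamma_\R(s+1-\kappa)\Gamma_\C(s+a+1)$ recorded immediately beforehand, and the critical range is simply read off. Your write-up fills in exactly the computation the paper leaves implicit, and the key point you isolate---that $\kappa'$ depends only on the parity of $\omega_\Pi\eta$, which is unchanged under passing to the contragredient---is precisely what makes the two halves of the critical strip come out symmetrically.
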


Note that none of these critical values can be zero (since $L(\Pi \times \eta, s) \ne 0$ for $\operatorname{Re}(s) \ge 1$, giving non-vanishing over $\mathrm{Crit}^+$; and non-vanishing over $\mathrm{Crit}^-$ follows via the functional equation). In particular, the near-central values $s = 0$ and $s = 1$ of $L(\Pi, s)$ are critical if and only if $\omega_{\Pi}$ is even.

\begin{remark}
An important example of $\Pi$ satisfying our conditions is the (normalised) symmetric square lift of a modular form $f$ of weight $k = a + 2 \ge 2$ and character $\varepsilon_f$. Then we have $L(\Pi, s) = L(\operatorname{Sym}^2 f, s + a+1)$ and $\omega_{\Pi}(-1) = (-1)^a$, so the above is consistent with the fact that $L(\operatorname{Sym}^2(f), 1)$ and $L(\operatorname{Sym}^2(f) \times \varepsilon_f^{-1}, k-1)$ are always critical values (independent of $a$).
\end{remark}

\subsubsection{Galois representations}\label{sec:galois}
By \cite{HLTT}, for each prime $p$ and embedding $\iota: E \hookrightarrow \overline{\Q}_p$,  there is a Galois representation $\rho_{\Pi, \iota}: \operatorname{Gal}(\overline{\Q} / \Q) \to \GL_3(\overline{\Q}_p)$, uniquely determined up to semisimplification, such that for all primes $\ell \ne p$ such that $\Pi_\ell$ is unramified, we have
\[ \det(1 - X \rho_{\Pi, \iota}(\operatorname{Frob}_{\ell}^{-1})) = P_\ell(X)\]
Here $\operatorname{Frob}_\ell$ is an arithmetic, and hence $\operatorname{Frob}_{\ell}^{-1}$ a geometric, Frobenius.

Conjecturally $\rho_{\Pi, \iota}$ should be de Rham at $p$, with Hodge--Tate weights $(-1-a, 0, 1+a)$; and if $\Pi_p$ is unramified, then it should be crystalline at $p$, and the eigenvalues of $\varphi$ on $\mathbb{D}_{\mathrm{cris}}(\rho_{\Pi, \iota})$ should be $( \alpha_p, \beta_p, \gamma_p)$. More generally, even if $\Pi_p$ is ramified, the Weil--Deligne representation $\mathbb{D}_{\mathrm{pst}}(\rho_{\Pi, \iota})$ should be related to $\Pi_p$ via the local Langlands correspondence. This conjecture is ``local-global compatibility for $\ell = p$''; it is known if $\Pi$ is essentially self-dual. None of our results will logically rely on this conjecture, or indeed on the existence of $\rho_{\Pi, \iota}$; but it serves as important motivation to explain why the statements are natural ones.

\subsection{\texorpdfstring{Rationality of $L$-values}{Rationality of L-values}}
\label{sec:rationality}

Recall \cref{prop:critical}, and let
\begin{align*}
\Crit^-(\Pi) &= \{ (-j, \eta): 0 \le j \le a,\ (-1)^j = \omega_{\Pi}\eta(-1)\} \\
\Crit^+(\Pi) &= \{ (j+1, \eta): 0 \le j \le a,\ (-1)^j = \omega_{\Pi}\eta(-1)\}
\end{align*}
so $L(\Pi \times \eta, j')$ is critical if and only if $(j', \eta) \in \Crit^-(\Pi) \sqcup \Crit^+(\Pi)$. For later use we write $\Crit^{\pm}_p(\Pi) = \{ (j', \eta) \in \Crit^{\pm}(\Pi): \eta \text{ has $p$-power conductor}\}$.


If $\Pi$ is a unitary RACAR of $\GL_3$, then conjecturally it has an attached motive $M_\Pi$ of weight 0. 
We note that for $t \in \Z$, the motive $M_\Pi(t)$ should have weight $-2t$, with Hodge decomposition $\h_B(M_\Pi(t)(\eta))\otimes \C = \h^{-t-a-1,-t+a+1} \oplus \h^{-t,-t} \oplus \h^{-t+a+1,-t-a-1}$, each summand 1-dimensional over $\C$. This motivates the following modified Euler factor at infinity:

\begin{definition}\label{def:modified infinity}
If $(-j,\eta) \in \Crit^-(\Pi)$, let 
\[
e_\infty(\Pi_\infty \times \eta_\infty, -j) \defeq i^{j-a-1} \cdot \Gamma_{\C}(a+1-j) = 2\cdot (2\pi i)^{j-a-1} \cdot \Gamma(a+1-j)
\]
corresponding to $t = -j$. If $(j+1,\eta) \in \Crit^+(\Pi)$, let
\begin{align*}
	e_\infty(\Pi_\infty \times \eta_\infty,j+1) &\defeq  (-i)^{-j-a-2}\cdot \Gamma_{\C}(a+2+j)\cdot  \frac{\Gamma_{\R}(j+1+\epsilon)}{(-i)^\epsilon \cdot \Gamma_{\R}(\epsilon -j)}
\end{align*}
	corresponding to $t = j+1$, and where $\epsilon = 0$ if $j$ is odd, and $\epsilon = 1$ if $j$ is even.
\end{definition}

	\begin{remarks}
  These factors would be denoted $\cL_\infty^{(i)}(M_\Pi(-j)(\eta))$ and $\cL_\infty^{(-i)}(M_\Pi(j+1)(\eta))$ respectively in \cite[\S1]{coates89}). In particular, we make opposite choices of parameter ($i$ vs.\ $-i$) in the two critical regions; and where defined, the factors are related by a functional equation
	 \begin{equation}\label{eq:modified infinity FE}
	 	e_\infty(\Pi_\infty^\vee,1 - t) = \gamma_\infty(\Pi_\infty, t) \cdot e_\infty(\Pi_\infty, t),
	 \end{equation}
	 (cf.\ equation (6) \emph{op.\ cit.}), where $\gamma_\infty(-)$ is the usual local $\gamma$-factor (defined with respect to $\psi$).
\end{remarks}

These factors $e_\infty(\Pi_\infty \times \eta_\infty, t)$ are the ones appearing in \cref{conj:intro-alg}. There are partial results towards this conjecture (see \cite{RagSac17} for an overview):

\begin{theorem}[Mahnkopf \cite{Mah98,Mah00}, Kasten--Schmidt \cite{KS13}]
\label{thm:mahnkopf}
 \cref{conj:intro-alg} holds for $n = 3$ up to replacing $e_\infty(\Pi_\infty \times \eta_\infty, t)$ with an inexplicit scalar $\tilde{e}_\infty(\Pi_\infty \times \eta_\infty, t) \in\C^\times$.
\end{theorem}

Note that it is not clear if the ratio $\frac{\tilde{e}_\infty(\Pi_\infty \times \eta_\infty, t)}{e_\infty(\Pi_\infty \times \eta_\infty, t)}$ is independent of $t$, so we cannot simply ``absorb'' it by renormalising the periods (although this is possible for $a = 0$, since then only one $t$-value can occur for each choice of sign $\pm$).

\begin{remark}\label{rem:periods}
We shall recall in \cref{sec:periods} the definition of a cohomological (Whittaker) period $\Theta_\Pi \in \C^\times$ associated to $\Pi$ (well defined up to $E^\times$). Note $(\omega_\Pi^{-1},1) \in \Crit^+(\Pi)$, and $L(\Pi\times\omega_\Pi^{-1},1) \neq 0$ (since it is far from the centre $s= \tfrac{1}{2}$ of the critical strip); then, precisely, we take $\Omega_\Pi^-$ to be an algebraic multiple of $\Theta_\Pi/L(\Pi\times\omega_\Pi^{-1},1)$. Analogously, $\Omega_\Pi^+$ will be an algebraic multiple of $\Theta_\Pi/L(\Pi\times \omega_\Pi^{-1}, 0)$.
\end{remark}

\subsection{Parabolics, refinements and near-ordinarity at \texorpdfstring{$p$}{p}}
 \label{sec:ordinarity}

 Let $p$ be a prime, and fix an embedding $\iota: E \hookrightarrow \overline{\Q}_p$. Consider the two maximal parabolic subgroups
 \[
  P_1 = \smallthreemat{\star}{\star}{\star}{0}{\star}{\star}{0}{\star}{\star}, \qquad
  P_2 = \smallthreemat{\star}{\star}{\star}{\star}{\star}{\star}{0}{0}{\star}.
 \]
 For each $i \in \{1, 2\}$, we can consider the normalised Jacquet module $J_{P_i}(\Pi_p)$, which is an admissible smooth representation of $(\GL_i \times \GL_{3-i})(\Q_p)$.

 \begin{definition}
  For $i = 1, 2$, a \emph{$P_i$-refinement} of $\Pi_p$ is a choice of an irreducible representation $\sigma_p \times \sigma'_p$ of $(\GL_i \times \GL_{3-i})(\Q_p)$ appearing as a subrepresentation of $J_{P_i}(\Pi_p)$. 
 \end{definition}
 
 Note that $\sigma_p\times \sigma'_p$ is a $P_1$-refinement of $\Pi_p$ if and only if $\sigma_p^{\prime \vee} \times \sigma_p^\vee$ is a $P_2$-refinement of $\Pi_p^\vee$. For a given $\Pi_p$ and $i$, $\sigma_p'$ is determined by $\sigma_p$ and vice versa, so we will often specify only $\sigma_p$. Accordingly, we say the $P_i$-refinement $\sigma_p \times \sigma_p'$ is \emph{unramified} if $\sigma_p$ is an unramified representation (but $\sigma_p'$ may be ramified). 
 
 \begin{definition}
  The \emph{slope} of a $P_i$-refinement $\sigma_p$ is the valuation of $\iota(\omega_{\sigma_p}(p))$, where $\omega_{\sigma_p}$ is the central character of $\sigma_p$ (and we assume, by enlarging $E$ if necessary, that $\sigma_p$ is defined over $E$).
 \end{definition}

 One can check that for a RACAR $\Pi$ of weight $(a, 0, -a)$, the slope of any $P_i$-refinement lies in the interval $[-1-a, 1 + a]$ (this follows from the relation to Hecke eigenvalues which we recall in the next section).

 \begin{definition}
  We say $\Pi$ is \emph{$P_i$-nearly-ordinary} if $\Pi_p$ admits a $P_i$-refinement of slope exactly $-1-a$. This refinement is unique if it exists. We say it is \emph{$P_i$-ordinary} if it is $P_i$-nearly-ordinary, and its unique nearly-ordinary $P_i$-refinement is unramified.
 \end{definition}

 \begin{remark}
  Clearly, if $\Pi_p$ is itself unramified, then any $P_i$-refinement of it must be unramified. In particular, `nearly-ordinary' and `ordinary' are equivalent if $\Pi_p$ is unramified. 
  
  Note, however, that ramified $\Pi_p$ can still admit unramified refinements.
 \end{remark}

 \begin{example}
  \label{ex:langlands}
  We briefly recall the classification of generic representations of $\GL_3$, and explain the conditions under which these are (nearly) ordinary. For simplicity, if $\chi$ is a character of $\Q_p^\times$ we write $v(\chi) = v_p(\iota(\chi(p))$.
  \begin{itemize}
  	\item If $\Pi_p$ is supercuspidal, it admits no $P_1$-refinements or $P_2$-refinements, and hence is never nearly-ordinary for any parabolic.

  	\item If $\Pi_p = \operatorname{St}_3 \otimes \lambda$ is a twist of the $\GL_3$ Steinberg representation by a character $\lambda$ (necessarily of finite order), then it has a unique $P_1$-refinement and a unique $P_2$-refinement, both of which have slope $-1$. Thus $\Pi_p$ is nearly-ordinary for both parabolics if $a = 0$ (and ordinary if $\lambda$ is unramified), but for $a > 0$ it is not nearly-ordinary for either $P_1$ or $P_2$.

  	\item If $\Pi_p$ is parabolically induced from a representation of $\GL_1 \times \GL_2$ of the form $\theta \times \pi$, where $\theta$ is a character and $\pi$ is supercuspidal, then its unique $P_1$-refinement is $\theta$, and its unique $P_2$-refinement is $\pi$. So it is $P_1$-nearly-ordinary if and only if $v(\theta) = -1-a$, whereas it is $P_2$-nearly-ordinary if and only if $v(\theta) = 1+a$. It is $P_1$-ordinary if it is $P_1$-nearly-ordinary and $\theta$ is unramified, while it is never $P_2$-ordinary.

  	\item If $\Pi_p$ is (irreducibly) induced from $\theta \times (\operatorname{St}_2 \otimes \lambda)$, where $\operatorname{St}_2$ is the $\GL_2$ Steinberg representation (and hence $\lambda^2 \theta = \hat\omega_{\Pi, p}$), then it has two $P_1$-refinements, namely $\theta$ and $\lambda |\cdot|^{1/2}$; note that $v(\theta) \le 1 + a$ implies $v(\lambda |\cdot|^{1/2}) \ge -1-\tfrac{a}{2}$. Thus $\Pi_p$ is $P_1$-nearly-ordinary in either of two (mutually exclusive) cases: if $v(\theta) = -1-a$ and $a$ is arbitrary; or if $a = 0$ and $v(\theta) = 1$. It is $P_1$-ordinary if $\theta$ is unramified in the former case, and if $\lambda$ is unramified in the latter. (There is a similar criterion for $P_2$-ordinarity, which we leave to the reader.)

  	\item If $\Pi_p$ is an irreducible principal series representation, induced from a character $\chi_1 \times \chi_2 \times \chi_3$ of the diagonal torus, then the possible $P_1$-refinements are exactly the $\chi_i$, and the $P_2$-refinements are the pairs $\{ \{\chi_1, \chi_2\}, \{\chi_2, \chi_3\}, \{\chi_3, \chi_1\}\}$. We can assume without loss of generality that $v(\chi_1) \le v(\chi_2) \le v(\chi_3)$; then $\Pi_p$ is $P_1$-nearly-ordinary if $v(\chi_1) = -1-a$, and $P_1$-ordinary if in addition $\chi_1$ is unramified; it is $P_2$-nearly-ordinary if $v(\chi_1\chi_2) = -1-a$, and $P_2$-ordinary if in addition $\chi_1$ and $\chi_2$ are unramified.
  \end{itemize}
 \end{example}

 \subsubsection*{Ordinarity and Galois representations}

  If $\sigma_p$ is a $P_i$-refinement of $\Pi_p$, then $\Pi_p$ is the unique generic constituent of $\operatorname{Ind}_{P_i}^G( \sigma_p \times \sigma'_p)$ for some $\sigma'_p$. Via compatibility of the local Langlands correspondence with parabolic induction, the Langlands parameter $\phi_{\Pi_p}$ has the form
  \[ \begin{pmatrix}\phi_{\sigma_p} & {\star}\\ 0 & \phi_{\sigma'_p} \end{pmatrix}. \]
  Assuming that the Galois representation $\rho_{\Pi, \iota}$ satisfies local-global compatibility at $p$, this gives an $i$--dimensional $(\varphi, N, G_{\Qp})$-stable subspace of $\mathbb{D}_{\mathrm{pst}}(\rho_{\Pi, \iota}|_{G_{\Qp}})$ isomorphic to $\phi_{\sigma_p}$.

  In general this does not arise from a subrepresentation of the Galois representation, since it may not be weakly admissible. However, the following is a straightforward check:

  \begin{proposition}
   Assuming the local-global compatibility conjecture, $\Pi_p$ is $P_i$-nearly-ordinary if and only if $\rho_{\Pi, \iota}|_{G_{\Qp}}$ preserves an $i$-dimensional subrepresentation accounting for the $i$ largest Hodge--Tate weights. Moreover, $\mathbb{D}_{\mathrm{pst}}$ of this subrepresentation is the Langlands parameter of the unique nearly-ordinary $P_i$-refinement.\qed
  \end{proposition}

  If the $P_r$-refinement $\sigma_p$ is unramified, then the Satake parameters of $\sigma_p$ are among the reciprocal roots of the Hecke polynomial $P_p(X)$. Thus the existence of an ordinary (rather than nearly-ordinary) $P_r$-refinement is equivalent to the Newton and Hodge polygons coinciding at $r$, which is the ``Panchishkin condition'' considered in \cite[Definition 5.5]{Pan94}.

  Thus $P_1$-ordinarity corresponds precisely to the \emph{Panchishkin condition} formulated in \cite{Pan94} for the existence of a $p$-adic $L$-function interpolating the $L$-values $L(\Pi \times \eta, -j)$, for $(-j, \eta) \in \Crit^-_p(\Pi)$. Similarly, $P_2$-ordinarity corresponds precisely to the Panchishkin condition for interpolation over $\Crit^+_p(\Pi)$.

 \subsection{The Coates--Perrin-Riou factor at \texorpdfstring{$p$}{p}}
  \label{sec:coates--perrin-riou}
  
  Let $t \in \Z$ be such that $(t, \mathrm{id}) \in \Crit(\Pi)$, and let $i = 1$ for $\Crit^-$ and $i = 2$ for $\Crit^+$. Let $\sigma_p$ be a $P_i$-refinement of $\Pi_p$.

  \begin{remark}
   In Galois-theoretic terms, $i$ is the number of Hodge--Tate weights of the twist $\rho_{\Pi, \iota}(t)$ which are strictly positive, and the condition that $L(\Pi, t)$ be critical implies that this is also the dimension of the $+1$ eigenspace for complex conjugation. So we are choosing a subrepresentation of the Weil--Deligne representation associated to $\rho_{\Pi, \iota}(t)$ whose dimension is equal to the number of positive Hodge--Tate weights.
  \end{remark}

  \begin{definition}\label{def:e_p}
  Recall $\psi$ is the additive character of $\A/\Q$ such that $\psi(x) = \mathrm{exp}(-2\pi i x)$ for $x \in \R$. We briefly denote this by $\psi^-$, and write $\psi^+$ for its inverse. 
   For $(t, \mathrm{id}) \in \Crit^\pm(\Pi)$, we set
   \[ e_p^\pm(\Pi_p, \sigma_p, t) = \gamma_p(\sigma_p, \psi^\pm, t)^{-1} \coloneqq \frac{L(\sigma_p, t)}{L(\sigma_p^\vee, 1 - t)\varepsilon_p(\sigma_p, \psi_p^\pm, t)} .\]
  \end{definition}

\begin{remark}
	    Here $\gamma_p(-)$ is the usual local $\gamma$-factor. 
	    
	    Note that in the epsilon factor, we make opposite choices of additive characters for the two critical regions; this ensures the validity of Lemma \ref{lem:FE at p}. 
	    Since the sign $\pm$ is determined by $t$, to avoid extraneous notation we henceforth always drop the $\pm$ (and $\psi^\pm$) from notation (except in the proof of \cref{lem:FE at p}), and just write $e_p(-)$ and $\gamma_p(-)$.
\end{remark}

   In this work we shall only consider nearly-ordinary refinements (although we hope that the more general finite-slope case will be studied in future works); since nearly-ordinary refinements are unique if they exist, we shall frequently omit $\sigma_p$ from the notation entirely, and write just $e_p(\Pi_p, t)$.

  \subsubsection*{Duality} If $\sigma_p\times\sigma'_p$ is a $P_i$-refinement of $\Pi_p$, then $\sigma_p^{\prime\vee}\times\sigma_p^\vee$ is a $P_{(3-i)}$-refinement of $\Pi_p^\vee$. Our hypotheses are symmetric under replacing $(\Pi, \sigma_p, t, i)$ and with $(\Pi^\vee, \sigma_p^{\prime \vee}, 1 - t, 3-i)$. We have the following relation between the Coates--Perrin-Riou factors for the two critical regions; compare \eqref{eq:modified infinity FE} above.
  
  \begin{lemma}\label{lem:FE at p}
  	We have
  \begin{equation}
   \label{eq:modified p FE}
   e_p(\Pi_p^\vee, \sigma_p^{\prime \vee}, 1 - t) = \gamma_p(\Pi_p, t) \cdot e_p(\Pi_p, \sigma_p, t).
  \end{equation}
  \end{lemma}

  \begin{proof}
   By symmetry we may suppose that $(t, \mathrm{id}) \in \Crit_p^-$ and $i = 1$, so the relation to be proved is
   \[ \gamma_p((\sigma_p')^\vee, \psi^+, 1 - t)^{-1} = \gamma_p(\Pi_p, \psi^-, t) \cdot \gamma_p(\sigma_p, \psi^-, t)^{-1}. \]
   Since $\gamma_p((\sigma_p')^\vee, \psi^+, 1 - t)^{-1} = \gamma_p(\sigma_p', \psi^-, t)$, this is equivalent to the ``inductivity'' property
   \[ 
    \gamma_p (\sigma_p, \psi^-, t) \cdot \gamma_p(\sigma_p', \psi^-, t) = \gamma_p (\Pi_p, \psi^-, t).
   \]
   As above, since $\sigma_p \times \sigma_p'$ embeds in $J_{P_i}(V)$, its associated Weil--Deligne representation has a filtration with graded pieces $\sigma_p$ and $\sigma_p'$. The result now follows from the fact that $\gamma$-factors are multiplicative in short exact sequences of Weil--Deligne representations, whether or not the sequence is split (see e.g.~\cite[Corollary 4.5(ii)]{helmmoss15}).
  \end{proof}

  \subsubsection*{Twists} For $\eta$ a Dirichlet character, we have $(t, \eta) \in \Crit^{\pm}(\Pi)$ if and only if $(t, \mathrm{id}) \in \Crit^{\pm}(\Pi \times \eta)$. We note that if $\sigma_p$ is a $P_i$-refinement of $\Pi_p$, then $\sigma_p \times \eta_p$ is a $P_i$-refinement of $\Pi_p \times \eta_p$; and this refinement is nearly-ordinary if $\sigma_p$ is.
  
  \begin{proposition}\label{prop:nearly-ordinary vs ordinary}
   To prove \cref{conj:intro}(i) for $n = 3$, it suffices to prove that it holds for all $\Pi$ that are $P_1$-ordinary at $p$ (not just $P_1$-nearly-ordinary).
  \end{proposition}
    
  \begin{proof}
   From the discussion above, if $\chi$ is any $p$-power-conductor Dirichlet character, \cref{conj:intro}(i) holds for $\Pi$ if and only if it holds for $\Pi \times \chi$ (and the measure $L_p^-(\Pi \times \chi)$ is the twist of $L_p^-(\Pi)$ by $\chi$). Given any $\Pi$ which is $P_1$-nearly-ordinary at $p$, its $P_1$-refinement $\sigma_p$ is a smooth character of $\Qp^\times$, and we can evidently find a Dirichlet character $\chi$ such that $\chi_p \sigma_p$ is unramified; thus $\Pi \times \chi$ is $P_1$-ordinary at $p$.
  \end{proof}
  
  \begin{remark}
   Note this reduction would not work for \cref{conj:intro}(ii), or for odd $n \ge 5$.
  \end{remark}
  
  \begin{example}\label{ex:unramified P1 refinement}
   If $\sigma_p$ is an unramified $P_1$-refinement of $\Pi_p$ with $\sigma_p(p)= \alpha_p \in E^\times$, then for all $(-j, \eta) \in \Crit_p^-(\Pi)$ we have
   \[ e_p\Big(\Pi_p \times \eta_p, \sigma_p \times \eta_p, -j\Big) \defeq \begin{cases}
   	G(\eta^{-1}) \cdot (p^{j+1}\alpha_p)^{-n} & \operatorname{cond}(\eta) = p^n > 1,\\[2mm]
   	\displaystyle\frac{(1-p^{-j-1}\alpha_p^{-1})}{(1 - p^j \alpha_p)} & \eta = 1.
   \end{cases}
   \]
  \end{example}

\subsection{Hecke operators}
\label{sec:U_p}

Let $i \in \{1, 2\}$ and let $N_{P_i}$ and $M_{P_i}$ be the unipotent radical and Levi subgroup of $P_i$. If $K_p \subset \GL_3(\Z_p)$ is any open subgroup containing $N_{P_i}(\Zp)$ and having an Iwahori decomposition with respect to $P_i$, then we have a normalised Hecke operator
\begin{equation}\label{eq:tau}
U_{p, i} = p^a \left[K_p \ \tau_i \ K_p\right], \qquad 	\tau_1 =  \sdthree{p}{1}{1}, \qquad \tau_2 = \sdthree{p}{p}{1} \in \GL_3(\Qp).
\end{equation}
on $\Pif^{K_p}$. The scalar $p^a$ optimally ensures $U_{p,i}$ preserves a $\Z$-lattice in Betti cohomology (see \S\ref{sec:local systems}), so its eigenvalues are algebraic integers (cf.\ \cite[Rem.\ 3.13]{BDW20}; our $U_{p,i}$ would be $U_{p,i}^\circ$ \emph{op. cit}.).

 From Casselman's canonical lifting theorem, the subspace of $\Pi_p^{K_p}$ on which the Hecke operator $U_{p, i}$ has finite slope (the sum of its generalised eigenspaces with non-zero eigenvalues) is isomorphic to the invariants of the Jacquet module $J_{P_i}(\Pi)$ under $K_p \cap M_{P_i}$. Moreover, the action of $U_{p, i}$ on this finite-slope subspace is $p^{a+1}$ times the action of $\tau_i \in Z(M_{P_i})$ on $J_{P_i}(\Pi)$ (the $+1$ comes from the modulus character). This result originates in an unpublished note of Casselman; see \cite{EmertonJacquetI} for an account.

 Let us now consider a $P_i$-refinement $\sigma_p \times \sigma'_p$ satisfying the following conditions: $\sigma_p$ is unramified, and all irreducible constituents of $J_{P_i}(\Pi_p) / (\sigma_p \times \sigma'_p)$ have different central characters from $\sigma_p \times \sigma'_p$. Both of these conditions are is automatic if the $p$-refinement is ordinary (since the other constituents, if any, must have strictly larger slope), but is also satisfied in many non-ordinary cases.

 \begin{remark}
  This is a $\GL_3$ analogue of the ``$p$-regularity'' condition often encountered in $\GL_2$ theory; if $\Pi_p$ is principal-series and $i = 1$, a $P_1$-refinement is just a choice of one among the three characters from which $\Pi_p$ is induced, and we are assuming that our chosen character is distinct from the other two.
 \end{remark}

 Recalling $\omega_{\sigma_p}$ is the central character of $\sigma_p$ and letting $\alpha_p = \omega_{\sigma_p}(p)$, it follows that the $U_{p, i} = p^{a + 1} \alpha_p$ generalised eigenspace on $\Pi_p^{K_p}$ coincides with the actual eigenspace, and is isomorphic to the $(K_p \cap M_{P_i})$-invariants of $\sigma_p \times \sigma'_p$.

\begin{remark}
If $\Pi_p$ is unramified, then its $P_1$-refinements are the unramified characters mapping $p$ to the Satake parameters of $\Pi_p$, so our notation $\alpha_p$ is consistent with \cref{sec:L-function}.
\end{remark}

\subsection{\texorpdfstring{$P_1$}{P1}-refined new-vectors}
\label{sect:refnewforms}
We now specialise to the case of an unramified $P_1$-refinement $\sigma_p \times \sigma'_p$. As $\sigma_p$ is an unramified character, the refinement is determined by the number $\alpha_p = \sigma_p(p)$. 

For $r \ge 0$, consider the subgroup
\begin{equation}\label{eq:U_p(Pi)}
\cU_{1, p}^{(P_1)}(p^r) = \left\{ g: g = 1 \bmod \smallthreemat{\star}{\star}{\star} {p}{\star}{\star} {p^R}{p^r}{p^r}\right\} \subset \GL_3(\Zp),
\end{equation}
where $R = \max(r, 1)$. By construction this subgroup has an Iwahori decomposition with respect to $P_1$; and its intersection the $\GL_2$ factor of the Levi of $P_1$ is the level $p^r$ mirabolic subgroup appearing in Casselman's $\GL_2$ local new-vector theory. Hence we have the following:

\begin{proposition}\label{prop:P_1 conductor}
For all sufficiently large $r$, there exists a vector $\varphi_p$ in $\Pi_p$ which is invariant under $\cU_{1, p}^{(P_1)}(p^r)$, and on which $U_{p, 1}$ acts via $p^{a+1} \alpha_p$. The minimal such $r$, denoted $r(\Pi_p, \alpha_p)$, is equal to the conductor of the $\GL_2(\Qp)$-representation $\sigma'_p$; and for this $r$, the space of such vectors is 1-dimensional.
\end{proposition}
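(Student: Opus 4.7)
The plan is to deduce the proposition from Casselman's lemma on Hecke algebras and Jacquet modules, combined with the local newform theorem for $\GL_2$. Setting $K_p \coloneqq \cU_{1,p}^{(P_1)}(p^r)$, one checks directly that $K_p$ admits an Iwahori decomposition with respect to $P_1$, that $\tau_1$ strictly contracts $N_{P_1}(\ZZ_p)$, and that $K_p \cap M_1 = \GL_1(\ZZ_p) \times \cU_{1,p}^{\GL_2}(p^r)$. Casselman's lemma then furnishes a canonical map
\[
\iota_{P_1} \colon \Pi_p^{K_p} \longrightarrow J_{P_1}(\Pi_p)^{K_p \cap M_1}
\]
induced by the projection $\Pi_p \twoheadrightarrow \Pi_{p,N_{P_1}}$; it restricts to a bijection on the finite-slope subspace (on which $[K_p\,\tau_1\,K_p]$ acts invertibly), and intertwines $[K_p\,\tau_1\,K_p]$ on the source with $\delta_{P_1}^{-1/2}(\tau_1) \cdot \tau_1$ on the normalised Jacquet module. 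Since $N_{P_1}$ is two-dimensional and the adjoint action of $\sdthree{x}{1}{1}$ on $\mathrm{Lie}(N_{P_1})$ is scalar multiplication by $x$, a direct computation gives $\delta_{P_1}(\tau_1) = p^{-2}$ and hence $\delta_{P_1}^{-1/2}(\tau_1) = p$.

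By hypothesis, $J_{P_1}(\Pi_p)$ contains $\alpha_p \boxtimes \sigma_p$ as its $\alpha_p$-isotypic summand, on which $\tau_1 = \sdthree{p}{1}{1}$ acts by the scalar $\alpha_p(p)$. The corresponding space of eigenvectors in $J_{P_1}(\Pi_p)^{K_p \cap M_1}$ is therefore
\[
\alpha_p^{\GL_1(\ZZ_p)} \otimes \sigma_p^{\cU_{1,p}^{\GL_2}(p^r)},
\]
whose first factor is one-dimensional because $\alpha_p$ is unramified. For the second, one invokes the local newform theorem for $\GL_2$ (Casselman): $\sigma_p^{\cU_{1,p}^{\GL_2}(p^r)}$ vanishes for $r < \operatorname{cond}(\sigma_p)$ and is one-dimensional for $r = \operatorname{cond}(\sigma_p)$.

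To conclude, note that the target eigenvalue $p^{a+1}\alpha_p(p)$ is non-zero, so any eigenvector for it in $\Pi_p^{K_p}$ lies automatically in the finite-slope subspace and is transported by $\iota_{P_1}$ to an $\alpha_p(p)$-eigenvector in the Jacquet module. Hence the source eigenspace vanishes for $r < \operatorname{cond}(\sigma_p)$ and is one-dimensional for $r = \operatorname{cond}(\sigma_p)$, establishing both minimality and uniqueness. I expect the main subtlety to be the normalisation bookkeeping — combining the Casselman twist by $\delta_{P_1}^{-1/2}(\tau_1) = p$ with the factor $p^a$ built into $U_{p,1}$ to produce the claimed eigenvalue $p^{a+1}\alpha_p(p)$ — but once this is in place, the argument is a routine assembly of two classical ingredients.
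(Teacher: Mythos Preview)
Your proposal is correct and follows essentially the same route as the paper's own argument: Casselman's canonical lifting of the Jacquet module (the paper cites Emerton's account in \cite{EmertonJacquetI}) together with newvector theory for $\GL_2$. The paper's proof is terser and does not spell out the $\delta_{P_1}^{-1/2}(\tau_1)=p$ normalisation check that you carry out, but the substance is identical.
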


This follows from the two results of Casselman discussed above: the canonical lifting theorem, which identifies the $U_{p, 1} = p^{a+1} \alpha_p$ eigenspace with the $(\Zp^\times \times \cU_{1, p}^{(\GL_2)}(p^r))$-invariants of $\sigma_p \times \sigma'_p$, combined with the local new-vector theorem for $\GL_2$ applied to $\sigma'_p$.

We define the \emph{$P_1$-refined local Whittaker newvector} $W^{\alpha_p}$ to be the unique basis of the above 1-dimensional space in the Whittaker model of $\Pi_p$, normalised to be 1 at the identity. By comparing Hecke eigenvalues (using the Hecke-equivariance of Casselman's lifting), we have the following formula for its values along the torus:

\begin{proposition}\label{prop:torusWh}
We have $W^{\alpha_p}\left(\sdthree{p^{m+n}}{p^n}{1}\right) = 0$ if $m < 0$ or $n < 0$, and for $m, n \ge 0$ its values are given by
\[
p^{n/2}\left(\tfrac{\alpha_p}{p}\right)^{(m + n)} W_{\sigma'_p}^{\mathrm{new}} \left(\stbt{p^n}{}{}{1} \right)
\]
where $W_{\sigma'_p}^{\mathrm{new}}$ is the normalised new-vector of the $\GL_2(\Qp)$-representation $\sigma'_p$.\qed
\end{proposition}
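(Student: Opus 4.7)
My approach will combine Casselman's canonical lifting of the Jacquet module (already invoked in the construction of $W^\alpha$ in \cref{sect:refnewforms}) with the factorisation of Whittaker functions along the Levi $M_1 \cong \GL_1 \times \GL_2$.

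I first dispose of the vanishing statement. The vector $W^\alpha$ is right-invariant under $N(\Zp) \subset \cU_{1,p}^{(P_1)}(p^r)$, and satisfies the Whittaker transformation property $W^\alpha(tn) = \psi(tnt^{-1})W^\alpha(t)$ for $t$ diagonal and $n \in N(\Qp)$. Combining the two forces $\psi$ to be trivial on $tN(\Zp)t^{-1}$ whenever $W^\alpha(t) \neq 0$. A direct computation using $\psi\smallthreemat{1}{x}{\star}{}{1}{y}{}{}{1} = \psi_0(x + y)$ shows that for $t = \sdthree{p^{m+n}}{p^n}{1}$ this condition is equivalent to $m \ge 0$ and $n \ge 0$.

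For the formula itself, the Casselman canonical lift $\iota$ identifies the finite-slope subspace of the $N_{P_1}(\Zp)$-invariants in $\Pi_p$ with $J_{P_1}(\Pi_p)$, and under $\iota$ the line spanned by $W^\alpha$ corresponds to the $\alpha_p \boxtimes \sigma_p^{\mathrm{new}}$-line in the Jacquet module, pinned down by the normalisation $W^\alpha(I) = 1$. Casselman's integral formula then gives, for $t$ in the dominant cone of the $M_1$-torus,
\[
W^\alpha(t) = \delta_{P_1}(t)^{1/2} \cdot W^{M_1}(t),
\]
where $W^{M_1}$ is the Whittaker function on $M_1$ of the corresponding Jacquet-module vector, with respect to the character $\psi|_{N_{M_1}}$ induced from $\psi|_N$. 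This Whittaker function factorises along the Levi: writing $t = (p^{m+n}, \stbt{p^n}{}{}{1})$ in the $\GL_1 \times \GL_2$ decomposition yields $W^{M_1}(t) = \alpha_p(p)^{m+n} \cdot W_\sigma^{\mathrm{new}}\stbt{p^n}{}{}{1}$. Substituting the elementary computation $\delta_{P_1}(t) = p^{-(2m+n)}$ gives the claimed identity.

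The main technical point will be to verify that Casselman's formula applies throughout the closed dominant cone $\{m, n \ge 0\}$, not merely its strict interior. The convergence of the defining Casselman integral requires $t$ to contract $N_{P_1}^-(\Zp)$, which amounts exactly to $m \ge 0$, and so already covers the boundary case $m = 0$; the case $n = 0$ (with $m \ge 0$) is then covered since the right-hand side collapses to the $\GL_2$ new-vector formula. The Hecke normalisations provide an internal consistency check: the eigenvalue $p^{a+1}\alpha_p(p)$ of $U_{p,1} = p^a[K_p\tau_1 K_p]$ on $W^\alpha$ corresponds under $\iota$ to the expected $\tau_1$-action $\alpha_p(p)$ on the $\alpha_p$-eigenline of $J_{P_1}(\Pi_p)$, confirming that the scaling conventions are compatible.
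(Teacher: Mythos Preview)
Your proposal is correct and follows essentially the same approach as the paper. The paper gives only a one-sentence justification before the proposition (``by comparing Hecke eigenvalues, using the Hecke-equivariance of Casselmann's lifting''), and your plan is a legitimate unpacking of that sentence: the Hecke-equivariance of the canonical lift is exactly what makes the relation $W^{\alpha}(t) = \delta_{P_1}(t)^{1/2}\,W^{M_1}(t)$ hold on the dominant cone, and your vanishing argument and modulus computation are both standard and correct. One small caution: the phrase ``Casselman's integral formula'' is not a standard name for the identity you invoke, so in a full write-up you should either derive it from the Hecke recursion directly (as the paper's phrasing suggests) or cite a precise source for the Whittaker--Jacquet comparison on the ordinary subspace.
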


\subsection{Ordinarity for unramified primes}

We briefly explain what the above definitions give in the (important!) special case when $\Pi_p$ is unramified. The parabolics $P_i$ correspond to two normalised Hecke operators in the spherical Hecke algebra (cf.\ \eqref{eq:tau})
\begin{equation}
\label{eq:normalised hecke 1}
T_{p, 1} = p^a \left[\GL_3(\Zp)\ \tau_1\ \GL_3(\Zp)\right], \qquad
T_{p, 2} = p^{a} \left[\GL_3(\Zp) \ \tau_2 \ \GL_3(\Zp)\right],
\end{equation}
for $\tau_i$ as in \eqref{eq:tau}. Let $a_{p, i}(\Pi)$ be the eigenvalue of $T_{p,i}$ on $\Pi_p^{\GL_3(\Zp)}$; as in \cref{sec:U_p}, these are algebraic integers. Recall the Satake parameters $\alpha_p,$ $\beta_p$ and $\gamma_p$ from \cref{sec:L-function}; we have
\[
a_{p, 1}(\Pi) = p^{a+1}(\alpha_p + \beta_p + \gamma_p),\qquad a_{p, 2}(\Pi) = p^{a+1}(\alpha_p \beta_p + \beta_p \gamma_p + \gamma_p \alpha_p).
\]
Moreover, $a_{p, 2} = \omega_{\Pi}(p) \cdot \overline{a_{p, 1}}$. From these formulae, the following is immediate:

\begin{lemma}\label{def:P_i ordinary}
$\Pi_p$ is $P_i$-ordinary (with respect to $\iota$) if the eigenvalue $a_{p, i}(\Pi)$ of $T_{p, i}$ acting on $\Pi_p$ is a $p$-adic unit. \qed
\end{lemma}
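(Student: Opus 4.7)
The plan is to directly compare the ultrametric properties of the two sides, using the explicit formulas relating $a_{p,i}(\Pi)$ to the Satake parameters that have just been recalled. Since $\Pi_p$ is unramified, every $P_1$- (resp.~$P_2$-) refinement is automatically an unramified character of $\QQ_p^\times$, and by \cref{ex:langlands} applied to the irreducible principal series case the $P_1$-refinements correspond to $\alpha_p,\beta_p,\gamma_p$ and the $P_2$-refinements to $\alpha_p\beta_p,\ \beta_p\gamma_p,\ \gamma_p\alpha_p$. Thus $\Pi_p$ is $P_i$-ordinary iff at least one of the relevant (pair-)products has valuation exactly $-(1+a)$, and the task reduces to a short valuation computation.

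For $i=1$: suppose $v_p(a_{p,1}(\Pi)) = 0$. The formula $a_{p,1}(\Pi) = p^{a+1}(\alpha_p+\beta_p+\gamma_p)$ gives $v_p(\alpha_p+\beta_p+\gamma_p) = -(1+a)$. Recall from \cref{sec:L-function} that each Satake parameter satisfies $v_p(\alpha_p), v_p(\beta_p), v_p(\gamma_p) \ge -(1+a)$. The ultrametric inequality $v_p(x+y+z) \ge \min(v_p(x),v_p(y),v_p(z))$ then forces at least one of these valuations to attain the lower bound $-(1+a)$. That Satake parameter provides an unramified $P_1$-refinement of slope $-(1+a)$, so $\Pi_p$ is $P_1$-ordinary.

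For $i=2$: the argument is identical, now applied to $a_{p,2}(\Pi) = p^{a+1}(\alpha_p\beta_p + \beta_p\gamma_p + \gamma_p\alpha_p)$. The pairwise products satisfy $v_p(\alpha_p\beta_p) \ge -(1+a)$ etc., since if one factor had valuation less than $-(1+a)$ the other would have to compensate, contradicting the individual bounds. (Alternatively, one can observe $\alpha_p\beta_p\gamma_p = \omega_{\Pi,p}(p)$ is a unit, so $v_p(\alpha_p\beta_p) = -v_p(\gamma_p)$, and the bound follows from the bound on $v_p(\gamma_p)$.) Then the same ultrametric argument shows that some $P_2$-refinement has slope $-(1+a)$, giving $P_2$-ordinarity.

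There is no real obstacle here: the lemma is a direct consequence of the bounds $v_p(\alpha_p),v_p(\beta_p),v_p(\gamma_p)\ge -(1+a)$, of the ultrametric inequality, and of the explicit identification of refinements with Satake parameters (or their pairwise products) for the unramified principal series. The only point worth flagging is that the converse need not hold — cancellation in the sum could cause $v_p(a_{p,i}(\Pi)) > 0$ even when $\Pi_p$ is $P_i$-ordinary — which is consistent with the lemma being stated only as a sufficient condition.
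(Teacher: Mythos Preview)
Your argument is correct and is exactly the computation the paper leaves implicit (it writes only ``From these formulae, the following is immediate''). One minor point: in the $i=2$ case your first justification for $v_p(\alpha_p\beta_p)\ge -(1+a)$ is garbled---the individual lower bounds on $\alpha_p,\beta_p$ only yield $\ge -2(1+a)$---but your parenthetical alternative is fine, since the paper records just above (\S2.6) that each Satake parameter has valuation in $[-1-a,\,1+a]$, so $v_p(\gamma_p)\le 1+a$ and hence $v_p(\alpha_p\beta_p)=-v_p(\gamma_p)\ge -(1+a)$.
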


Hence, if we order the Satake parameters so that $v_p(\alpha_p) \le v_p(\beta_p) \le v_p(\gamma_p)$, then $\Pi_p$ is $P_1$-ordinary if and only if $v_p(\alpha_p) = -1-a$ (the smallest possible value), in which case the unique ordinary $P_1$-refinement is $\sigma_p \times \sigma'_p$, where $\sigma_p$ is the unramified character with $\sigma_p(p) = \alpha_p$, and $\sigma'_p$ is the unramified $\GL_2$-representation with Satake parameters $\{\beta_p, \gamma_p\}$.

On the other hand, $\Pi_p$ is $P_2$-ordinary if and only if $v_p(\gamma_p) = 1 + a$ (the largest possible), and if so its unique ordinary $P_2$-refinement is $\sigma_p \times \sigma'_p$, where $\sigma_p$ has Satake parameters $\{\alpha_p, \beta_p\}$ and $\sigma'_p$ is the unramified character $p \mapsto \gamma_p$.

\begin{remark}
Note that $P_1$-ordinarity and $P_2$-ordinarity are equivalent for essentially self-dual representations (of prime-to-$p$ level), but not for general RACARs. Explicit examples which are $P_1$-ordinary but not $P_2$-ordinary can be found in the computations of \cite{GKTV97}.
\end{remark}

In this unramified setting, the quantity $r(\Pi, \alpha_p)$ of \cref{prop:P_1 conductor} is 0, and the corresponding group $\cU_{1, p}^{(P_1)}(1)$ is the parahoric subgroup associated to $P_1$; so the $P_1$-refined new vector is simply the unique normalised $(U_{p, 1} = p^{a + 1} \alpha_p$)-eigenvector in the parahoric invariants of $\Pi_p$.


\section{Symmetric spaces and Betti cohomology}

\subsection{Symmetric spaces}\label{sec:symmetric spaces}

For any split reductive group $J$ over $\Q$, we define a symmetric space for $J$ by $\uhp_J = J(\R) / K_{J, \infty}^{\circ} Z_{J, \infty}^\circ,$ where $(-)^\circ$ denotes the identity component, $K_{J,\infty}$ is a maximal compact subgroup of $J(\R)$, and $Z_\infty = Z_J(\R)$. For a neat open compact subgroup $\cU \subset J(\A\subf)$, we define
\[
Y^J(\cU) = J(\Q) \backslash \left[ J(\A\subf)/\cU \times \uhp_{J} \right].
\]

Note that for $J = \GL_n$, the components of $Y^J(\cU)$ are indexed by the double quotient
\[
Y^{\GL_1}(\det \cU) = \Q^{\times} \backslash \A^\times / \det(\cU) \R_{>0}^\times \cong \Zhat^\times / \det(\cU).
\]
Each component of $Y^J(\cU)$ is the quotient of $\uhp_J^\circ$ by an arithmetic subgroup of $\SL_n(\Q)$.

Note $\uhp_{\GL_2}$ is the union of two copies of the complex upper half-plane, and has dimension 2 as a real manifold; while $\uhp_{\GL_3}$ is a 5-dimensional real manifold, again with two components.

\begin{remark}
Our conventions match the usual ones for modular curves when $J = \GL(2)$. However, note that some papers (such as \cite{HT17}) use the slightly different quotient $J(\Q) \backslash J(\A) / U K_\infty Z_\infty^\circ$. For example, if $J = \GL(2)$ and $U = \{ \stbt {*}{*}{0}{1} \bmod N\}$, our $Y^J(U)$ is the usual modular curve $Y_1(N)$, while the space considered in \cite{HT17} is the quotient of $Y_1(N)$ by an anti-holomorphic involution.
\end{remark}

\subsection{Betti cohomology and Hecke operators}\label{sec:local systems}

Let $J, \cU$ be as above. Given an algebraic representation $V$ of $J$ (over $\Q$), we have three possible constructions of local systems on $Y^{J}(\cU)$:

\begin{itemize}

\item a local system $\sV_{\Q}$ of $\Q$-vector spaces, given by the locally constant sections of the projection
\[
J(\Q) \backslash\big[\big(J(\A\subf) \times \uhp_{J}\big) \times  V(\Q)\big]/\cU \longrightarrow Y^J(\cU),
\]
with action $\gamma \cdot [(g,z),v] \cdot u = [(\gamma gu, \gamma z), \gamma \cdot v]$. (The functor $V \mapsto \sV_{\Q}$ is Pink's ``canonical construction'' functor, \cite{pink90}).

\item a local system $\sV_{\infty}$ of $\R$-vector spaces, given by the locally constant sections of
\[ J(\Q) \backslash\big[\big(J(\A\subf) \times J(\R)\big) \times  V(\R)\big]/\cU K_{J,\infty}^{\circ} Z_{J,\infty}^{\circ} \to Y^J(\cU),\]
with action $\gamma \cdot [(g,z),v] \cdot u k_\infty z_{\infty} = [(\gamma gu, \gamma z), (k_\infty z_{\infty})^{-1} \cdot v]$.

\item a local system of $\Qp$-vector spaces $\sV_p$ for any finite prime $p$, given by the locally constant sections of
\begin{equation}\label{eq:p-adic local system}
	J(\Q) \backslash\big[\big(J(\A\subf) \times \uhp_{J}\big) \times V(\Qp)\big]/\cU \longrightarrow Y^{J}(\cU),
\end{equation}
with action $\gamma[(g,z),v]u = [(\gamma gu, \gamma z), u_p^{-1} \cdot v]$.
\end{itemize}

If $\cU' \subset \cU$, then the formation of $\sV_?$ is compatible with the natural projection map $Y^J(\cU') \to Y^J(\cU)$, and we get natural maps $\h^\bullet(Y^J(\cU), \sV_?) \to \h^\bullet(Y^J(\cU'), \sV_?)$, where $\h^\bullet$ denotes Betti cohomology.  We write
\[
\h^\bullet(Y^J, \sV_{?}) = \varinjlim_{\cU} \h^\bullet(Y^J(\cU), \sV_{?})
\]
where $\cU$ varies over open compact subgroups of $J(\A\subf)$. This direct limit is naturally a representation of $J(\A\subf)$, and $\h^\bullet(Y^J(\cU),\sV_?) = \h^\bullet(Y^J,\sV_?)^{\cU}.$  We have direct analogues of these statements for compactly-supported cohomology $\h^\bullet_c$.

It is standard that there are canonical isomorphisms of local systems
\begin{equation}\label{eq:local system isos}
\sV_{\Q} \otimes \Qp \cong \sV_p,  \qquad \sV_{\Q} \otimes \R \cong \sV_{\infty}.
\end{equation}
For instance, the first of these two is given on sections by $[(g,z), v] \mapsto [(g,z), g_p^{-1}\cdot v]$. Details on all of this can be found in \cite[\S1.2]{Urb11}.

The Betti cohomology of $Y^J(\cU)$ is equipped with a natural action of the Hecke algebra $\C[\cU \backslash \GL_3(\A\subf) / \cU]$. The isomorphisms on cohomology groups induced by the isomorphisms \eqref{eq:local system isos} of local systems are all equivariant under the Hecke operators \cite[\S1.2]{Urb11}.

\label{sec:hecke}

For later use, we describe the $P_1$-Hecke operator at $p$ in more detail. Let $\cU = \cU^{(p)}\cU_p \subset \GL_3(\A\subf)$ be an open compact subgroup, with $\cU_p$ admitting an Iwahori decomposition relative to $P_1$, with notation as in \cref{sec:U_p}.  Recall $\tau_i$ from \eqref{eq:tau}; for ease of notation, we set $\tau \defeq \tau_1$. Then we have maps
\begin{align*}
Y^{\GL_3}(\cU)\xleftarrow{\ \mathrm{pr}_{\cU}^{\cU \cap \tau \cU\tau^{-1}}\ }	Y^{\GL_3}(\cU \cap \tau \cU\tau^{-1})
&\xrightarrow{\ \ \tau\ \ }
Y^{\GL_3}(\tau^{-1} \cU \tau \cap \cU)\\
&\xrightarrow{\ \mathrm{pr}_{\cU}^{\tau^{-1} \cU \tau \cap \cU}\ } Y^{\GL_3}(\cU),
\end{align*}
where the middle map is induced by right-translation of $\tau$ on $\GL_3(\A\subf)$ and the outside maps are the natural projection maps. Passing from left-to-right and right-to-left respectively, we get associated (normalised) Hecke operators
\begin{align*}
U_{p,1}' &\defeq p^a \cdot \left(\mathrm{pr}_{\cU}^{\cU \cap \tau \cU\tau^{-1}}\right)^* \circ \tau_* \circ \left(\mathrm{pr}_{\cU}^{\tau^{-1} \cU \tau \cap \cU}\right)_*,\\
U_{p,1} &\defeq p^a \cdot \left(\mathrm{pr}_{\cU}^{\tau^{-1} \cU \tau \cap \cU}\right)^* \circ \tau^* \circ \left(\mathrm{pr}_{\cU}^{\cU \cap \tau \cU\tau^{-1}}\right)_*,
\end{align*}
on the cohomology $\h^\bullet_*(Y^{\GL_3}(\cU),\sV_?)$, for $* = \varnothing$ or $\mathrm{c}$ and $\sV_?$ as in \cref{sec:local systems}. As in \eqref{eq:normalised hecke 1}, the scalars $p^a$ are for integral normalisation. The operators $U_{p,1}$ and $U_{p,1}'$ are adjoint to each other under Poincar\'e duality, as we make precise in \S\ref{sec:cup product pairing}.

We also define \emph{ordinary projectors} $e_{\mathrm{ord},1} \defeq \lim_{n\to \infty} U_{p,1}^{n!}$ and $e_{\mathrm{ord},1}' \defeq \lim_{n \to \infty}(U_{p,1}')^{n!}$; by definition, $U_{p,1}$ is invertible on the image of $e_{\mathrm{ord},1}$.

\subsection{Automorphic cohomology classes}\label{sec:auto cohom classes}

Let $\Pi$ be a RACAR of $\GL_3(\A)$. We now realise $\Pi$ in the compactly-supported Betti cohomology of $Y^{\GL_3}$. Let $\cU \subset \GL_3$ be any neat open compact subgroup such that $\Pif^{\cU} \ne 0$, and write $\sV_{\lambda,\Q}^\vee$ for the $\Q$-local system attached to $V_\lambda^\vee$. Via the cuspidal cohomology, and the natural map $\h^\bullet_{\mathrm{cusp}} \hookrightarrow \h^\bullet_{\mathrm{c}}$ of \cite[p.123]{Clo90}, there is an injection
\[
\h^\bullet\big(\mathfrak{gl}_3,K_{3,\infty}^\circ; \Pi_\infty\otimes V_{\lambda}^{\vee}(\C) \big)\otimes \cW_\psi(\Pif)^{\cU} \longhookrightarrow \h^\bullet_{\mathrm{c}}\left(Y^{\mathrm{GL}_3}(\cU), \sV_{\lambda,\Q}^\vee(\C)\right),
\]
compatible with the Hecke action of $\C[\cU \backslash \GL_3(\A\subf) / \cU]$. Via the choice of $\zeta_\infty$ from \eqref{eq:omega infinity}, this yields a map
\begin{equation}\label{eq:pi to cohom}
\phi_\Pi^{\cU}: \mathcal{W}(\Pif)^{\cU}\longhookrightarrow \hct^2\left(Y^{\mathrm{GL}_3}(\cU), \sV_{\lambda,\Q}^\vee(\C)\right).
\end{equation}
The $\Q$-local systems, and \eqref{eq:pi to cohom}, are compatible with varying $\cU$, giving a $\GL_3(\A\subf)$-equivariant map
\begin{equation}\label{eq:phi_Pi}
\phi_\Pi : \mathcal{W}(\Pif) \longhookrightarrow \hct^2\left(Y^{\mathrm{GL}_3}, \sV_{\lambda,\Q}^\vee(\C)\right) = \varinjlim_{\cU} \hct^2\left(Y^{\mathrm{GL}_3}(\cU), \sV_{\lambda,\Q}^\vee(\C)\right).
\end{equation}
We shall denote the image of this map by $\hct^2(\Pi, \C)$.

\subsubsection{Periods and rationality}\label{sec:periods}

The representation $\hct^2(Y^{\mathrm{GL}_3}, \sV_{\lambda,\Q}^\vee(\C))$ has a natural $\Q$-structure given by the cohomology of $\sV_{\lambda,\Q}^\vee(\Q)$. Since the complex representation is admissible and contains $\Pif$ with multiplicity 1, it follows that there is a number field $E$, the \emph{field of definition} of $\Pi$, such that the $E$-linear representation
\[ \hct^2(\Pi, E) = \hct^2(\Pi, \C) \cap \hct^2\left(Y^{\mathrm{GL}_3}, \sV_{\lambda,\Q}^\vee(E)\right) \]
is non-zero and gives an $E$-structure on $\hct^2(\Pi,\C)$. (This is the same field $E$ as from \cref{sec:L-function}. By the strong multiplicity one theorem for $\GL_3$, we may take $E$ to be the field generated by the Hecke eigenvalues of $\Pi$ at the unramified primes, although we shall not need this.)

From the uniqueness of Whittaker models, $\mathcal{W}(\Pif)$ also has a canonical $E$-structure $\mathcal{W}(\Pif, E)$, given by the functions $W \in \mathcal{W}(\Pif)$ which take values in $E\Q^{\mathrm{ab}}$ and satisfy
\[ W(g)^{\sigma} = W\left(\sdthree{\kappa(\sigma)^2}{\kappa(\sigma)}{1} g\right) \quad\text{for all $\sigma \in \operatorname{Gal}(E \Q^{\mathrm{ab}} / E)$},\]
where $\kappa: \operatorname{Gal}(\Q^{\mathrm{ab}} / \Q) \to \widehat{\Z}^\times$ is the cyclotomic character. By \cite[Prop.\ 3.1]{Clo90}, there exists $\Theta_\Pi \in \C^\times$ such that
\[
\phi_\Pi\left(\mathcal{W}(\Pif, E)\right) = \Theta_\Pi \cdot \hct^2(\Pi, E).
\]

\begin{remark}
Having fixed $\zeta_\infty$, the period $\Theta_\Pi$ is uniquely determined modulo $E^\times$. We may, however, rescale $\zeta_\infty$ by an arbitrary non-zero complex scalar, which then also rescales $\Theta_\Pi$.

Note that for $\GL_n$ with $n$ odd, we obtain only a single Whittaker period, since \eqref{eq:infinite cohomology} is 1-dimensional. This differs from the case of $\GL_n$ for $n$ even, where \eqref{eq:infinite cohomology} is 2-dimensional and there are two Whittaker periods (one for each choice of sign at $\infty$).
\end{remark}

\subsubsection{Integral lattices}

Let $V_\mu^J$ be an algebraic representation of a split reductive group $J$, with highest weight vector $v_\mu \in V_\mu^J$. An \emph{admissible lattice} is a lattice $\cL \subset V_\mu$ such that: (1) the map $J_{/\Q} \to \GL(V_\mu)$ extends to a map of $\Z$-group schemes $J_{/\Z} \to \GL(\cL)$, and (2) the intersection of the highest weight space in $V_\mu$ with $\cL$ is $\Z\cdot v_\lambda$.

The set of admissible lattices in $V_\mu^J$ (for a fixed choice of $v_\mu$) is finite (cf.~\cite[\S 4.2]{LSZ17}), and there are uniquely-determined maximal and minimal admissible lattices. Since the quotient of these two lattices is a finite abelian group, the base-extensions to $\Z_p$ of these two lattices coincide for all sufficiently large $p$ (depending on $\mu$); for instance, if $J = \SL_2$, then the minimal and maximal lattices in the $k$-th symmetric power of the standard representation coincide over $\Zp$ for all $p > k$.

We adopt the convenient (but somewhat misleading) notation that $V_{\mu,\Z}^{\GL_2}$ denotes the \emph{minimal} such lattice in $V_\mu^{\GL_2}$, whilst $V_{\lambda,\Z}^{\GL_3}$ denotes the \emph{maximal} such lattice in $V_\lambda^{\GL_3}$. This ensures that under the branching laws of the next section, $V_{\star,\Z}^{\GL_2}$ is always mapped into $V_{\star,\Z}^{\GL_3}$.

\subsubsection{Integral structures on cohomology}\label{sec:integral periods}

Now let $p$ be a prime, and $v \mid p$ a prime of $E$. Completing $E$ at $v$, we may consider
\[
\phi_{\Pi}(\cW(\Pi_{\mathrm{f}},E)) /\Theta_\Pi \subset \hct^2(Y^{\GL_3},\sV_{\lambda,\Q}^\vee(E_v)).
\]

Note as above that $\sV_{\lambda,\Q}^\vee(E_v) \cong \sV_{\lambda,p}^\vee(E_v)$ on $Y^{\GL_3}(\cU)$. We can define an integral version using the lattice $V_{\lambda,\Z}$ above (or, more properly, the analogue $V_{\lambda,\Z}^\vee$, which can be described similarly); the $\cO_{E,v}$-points $V_{\lambda,\Z}(\cO_{E,v})$ of this representation carry an action of $\GL_3(\Zp)$, so we get an associated local system  $\sV_{\lambda,p}^\vee(\cO_{E,v})$ of $\Zp$-modules (defined exactly as in \eqref{eq:p-adic local system}), and where we henceforth drop the $\Z$ for convenience.

 Then, for any level $\cU$, the space $\hct^2\left(Y^{\mathrm{GL}_3}(\cU), \sV_{\lambda,p}^\vee(\mathcal{O}_{E, v})\right)$ is a finitely-generated $\mathcal{O}_{E, v}$-module; and the quotient of this module by its torsion subgroup is an $\cO_{E, v}$-lattice in $\hct^2\left(Y^{\mathrm{GL}_3}(\cU), \sV_{\lambda,p}^\vee(E_v)\right)$. 

For any finite subset $\{W_i\} \subset \cW(\Pi_{\mathrm{f}},E)$, any open compact $\mathcal{U}$ stabilizing all the $W_i$, and any finite extension $L / \Qp$ into which $E$ embeds, we have classes
\begin{equation}
	\phi_\Pi(W_i)/\Theta_\Pi \in \hc{2}(Y^{\GL_3}(\cU),\sV_\lambda^\vee(L)),
\end{equation}
depending on the choices of $\zeta_\infty$ (determined up to $\C^\times$) and the period $\Theta_\Pi$ (which, given the choice of $\zeta_\infty$, is determined up to $E^\times$). Via the embedding $\iota$, we may further rescale $\Theta_\Pi$ to ensure all the $\phi_\Pi(W_i)/\Theta_\Pi$ take values in the lattice $\hc{2}(Y^{\GL_3}(\cU),\sV_\lambda^\vee(\cO_L)) / \mathrm{\{torsion\}}$. Such a normalisation depends on $\{W_i\}$.

\begin{remark}\label{rem:L}
	Henceforth we fix a choice of a finite extension $L / \Qp$ and an embedding $\iota : E \hookrightarrow L$ (factoring through $E_v$ for some prime $v \mid p$), and only use the local systems $\sV_{\lambda,p}^\vee(L)$ or $\sV_{\lambda,p}^\vee(\cO_{L})$ constructed from the action of $\GL_3(\Zp)$. To ease notation we will typically drop the subscript $p$ and write $\sV_\lambda^\vee$.
\end{remark}

\subsection{The cup product pairing}\label{sec:cup product pairing}
Let $\cU = \cU^{(p)}\cU_p\subset \GL_3(\A\subf)$ be a neat open compact subgroup and let $R$ be a $\Zp$-algebra. There is a (perfect) Poincar\'e duality pairing
\begin{equation}\label{eq:poincare}
\langle -,-\rangle_{\cU}: \frac{\hc{2}\big(Y^{\GL_3}(\cU), \sV_\lambda^\vee(R)\big)}{\mathrm{(torsion)}}  \times \frac{\h^3(Y^{\GL_3}(\cU),\sV_\lambda(R))}{\mathrm{(torsion)}} \longrightarrow R
\end{equation}
given by composing cup product, the natural pairing $V_\lambda^\vee(R) \otimes V_\lambda (R) \to R$, and integration over the smooth (5-dimensional) real manifold $Y^{\GL_3}(\cU)$. If $N_{P_1}(\Zp) \subset \cU_p$ and $\cU_p$ has an Iwahori decomposition with respect to $P_1$, then from \cref{sec:hecke} we have Hecke operators $U_{p,1}$ and $U_{p,1}'$ acting on $\h^\bullet_*(Y^{\GL_3}(\cU),\sV_\lambda(R))$, where $* \in \{\varnothing, \mathrm{c}\}$. Note that under Poincar\'e duality, pullback is adjoint to pushforward; thus the operator $U_{p,1}$ (acting on either factor) is adjoint to $U_{p,1}'$ (acting on the other factor) under $\langle-,-\rangle_{\cU}$.

\section{The subgroup \texorpdfstring{$H$}{H}}
\label{sect:groups}

\begin{definition}
Let $H = \GLt\times\GL_1$, and let $\iota: H \hookrightarrow \GL_3$ be the embedding
\begin{equation}\label{eq:iota}
	\left[\smallmatrd{a}{b}{c}{d},z\right] \longmapsto \smallthreemat{a}{b}{}{c}{d}{}{}{}{z}.
\end{equation}
\end{definition}

\begin{definition}\label{def:nu}
Let $\nu_1, \nu_2: H \to \GL_1$ be the homomorphisms given by
\[ \nu_1(\gamma, z) = \tfrac{\det \gamma}{z}\qquad \nu_2(\gamma, z) = z. \]
\end{definition}

Note $(\nu_1, \nu_2)$ gives an isomorphism $H / H^{\mathrm{der}} \cong \GL_1 \times \GL_1$ (this parametrisation of $H / H^{\mathrm{der}}$ may seem slightly unnatural, but will give us nicer formulae later), and $\det \circ \mathop{\iota} = \nu_1\nu_2^2$.

\subsection{Symmetric spaces}\label{sec:symmetric space H}

It is important to note that the embedding $\iota: H \to \GL_3$ does not induce a map on symmetric spaces, since the inclusion $Z_{\GL_3} \subset \iota(Z_H)$ is strict. To transfer cohomology classes from $Y^H$ to $Y^{\GL_3}$, we instead define
\[
\widetilde{\uhp}_H = H(\R) / \left(K_{H, \infty}^{\circ} \cdot \iota^{-1}(Z_{G, \infty}^\circ)\right),
\]
which maps naturally to both $\uhp_H$ and $\uhp_G$. If $\cU \subset H(\A\subf)$ is open compact, let
\begin{equation}
\label{eq:tilde Y}
\widetilde{Y}^H(\cU) \defeq H(\Q)\backslash\left[H(\A\subf)/\cU \times \widetilde{\cH}^H\right].
\end{equation}
For any open compact $\cV \subset \GL_3(\A\subf)$, we then have a diagram of maps
\begin{equation}\label{eq:tilde maps}
Y^H(\cV \cap H) \leftarrow \widetilde{Y}^H(\cV \cap H) \xrightarrow{\ \iota\ } Y^G(\cV),
\end{equation}
where the right-hand map is induced by $\iota$. Pulling back under the leftward arrow and pushing forward under the rightward arrow gives a map from the cohomology $Y^H$ to that of $Y^{\GL_3}$.

If $\cV$ is small enough, the left arrow is a fibre bundle with fibres isomorphic to $Z_{H, \infty}^\circ / \iota^{-1}(Z_{G, \infty}^\circ) \cong \R$. The spaces $Y^H$ (resp.\ $\widetilde{Y}^H$) have dimension 2 (resp.\ 3) as real manifolds.

We get local systems on the modified spaces $\widetilde{Y}^H(\cU)$ from \eqref{eq:tilde Y}, defined identically to \cref{sec:local systems}.

\subsection{Branching laws}\label{sec:branching laws}

Note $V^H_{(r, s; t)} = V_{(r,s)}^{\GL_2} \otimes V_{(t)}^{\GL_1}$ is the $H$-representation of highest weight $(\stbt x {}{}y, z) \mapsto x^r \cdot  y^s \cdot z^t$. We let $V_{(r,s;t),\Z}^H$ denote the \emph{minimal} admissible lattice in $V_{(r,s;t)}^H$. The following is equivalent to the well-known branching law from $\GL_3$ to $\GL_2$, e.g.~\cite[\S8]{goodmanwallach}.

\begin{proposition}\label{prop:branching}
Let $\lambda = (a, 0, -a)$. The restriction of $V_\lambda$ to $H$ (embedded via $\iota$) is given by
\[
\iota^*\left(V_\lambda\right) \cong \bigoplus_{0 \le i,j \le a} V^H_{(j, -i; i-j)}.
\]
\end{proposition}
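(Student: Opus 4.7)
The plan is to derive the claim in two steps: first apply the classical branching rule from $\GL_3$ to its Levi subgroup $\GL_2$ (sitting in the upper-left block) to decompose $\iota^\ast(V_\lambda)$ as a $\GL_2$-representation; and second, use the fact that the center of $\GL_3$ acts trivially on $V_\lambda$ to pin down the action of the $\GL_1$-factor on each summand.

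For the first step, the Gelfand--Tsetlin / interlacing branching law for $\GL_3 \downarrow \GL_2$ (see e.g.\ \cite[\S8]{goodmanwallach}) says that, for $\lambda=(\lambda_1,\lambda_2,\lambda_3)$ dominant, restriction along the upper-left inclusion gives
\[
V_\lambda\big|_{\GL_2} \;\cong\; \bigoplus_{(\mu_1,\mu_2)} V^{\GL_2}_{(\mu_1,\mu_2)},
\]
summed over integer pairs with $\lambda_1 \ge \mu_1 \ge \lambda_2 \ge \mu_2 \ge \lambda_3$, each with multiplicity one. Specialising to $\lambda=(a,0,-a)$, the allowed $(\mu_1,\mu_2)$ are precisely $(j,-i)$ with $0\le i,j\le a$, so
\[
\iota^\ast(V_\lambda) \;\cong\; \bigoplus_{0\le i,j\le a} V^{\GL_2}_{(j,-i)}
\]
as $\GL_2$-representations.

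For the second step, I would note that the central character of $V_\lambda$ is $z\mapsto z^{\lambda_1+\lambda_2+\lambda_3}=z^0$, so the scalar matrices in $\GL_3$ act trivially on $V_\lambda$. Under $\iota$, the element $(zI_2,z)\in H$ equals $zI_3$, hence acts as the identity. Therefore $(I_2,z)\in H$ acts on each $\GL_2$-summand by the same scalar as $(z^{-1}I_2,1)$. The center of $\GL_2$ acts on $V^{\GL_2}_{(j,-i)}$ through the character $z\mapsto z^{j-i}$, so $(z^{-1}I_2,1)$ acts as $z^{-(j-i)}=z^{i-j}$. Combining with the $\GL_2$-type, the summand is $V^H_{(j,-i;\,i-j)}$ as claimed.

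I do not anticipate a serious obstacle here: the $\GL_3\downarrow\GL_2$ branching rule is textbook material, and the identification of the $\GL_1$-weight reduces entirely to the vanishing of the central character of $V_\lambda$, which is immediate from $\lambda_1+\lambda_2+\lambda_3=0$. The only mild point to get right is a bookkeeping one, namely matching sign conventions so that $(\mu_1,\mu_2)=(j,-i)$ is reparameterised with $0\le i,j\le a$ in the indexing set of the direct sum, and so that the resulting $\GL_1$-weight $i-j$ matches the third entry of $(j,-i;\,i-j)$.
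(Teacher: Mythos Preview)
Your proposal is correct and follows exactly the approach the paper indicates: the paper does not give a proof but simply states that the result ``is equivalent to the well-known branching law from $\GL_3$ to $\GL_2$, e.g.~\cite[\S8]{goodmanwallach}'', and your argument spells out precisely this reduction (interlacing branching for the $\GL_2$-factor, plus the trivial central character to pin down the $\GL_1$-weight).
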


For such $\lambda$, we shall fix choices of non-zero morphisms of $H$-representations over $\Q$
\[ \operatorname{br}^{[a,j]}: V^H_{(j, 0; -j)} \to \iota^*\left(V_{\lambda}\right)\]
for each $j$ as above. Note $V^H_{(j, 0; -j)} = V^H_{(0, -j; 0)} \otimes \|\nu_1^j\|$, hence our choice yields a pairing
\begin{equation}\label{eq:branching pairing}
\langle -,-\rangle_{a,j} : V_{\lambda}^\vee \times V_{(0,-j)}^{\GL_2} \to \mathbb{G}_a,\hspace{12pt} (\mu,v) \mapsto \mu\big( \mathrm{br}^{[a,j]}(\|\nu_1^j\| \otimes [v \otimes 1])\big).
\end{equation}

As in \cite[Prop 4.3.5]{LSZ17}, $\mathrm{br}^{[a,j]}$ maps the (minimal) admissible lattice $V^H_{(j,0;-j),\Z}$ into the (maximal) admissible lattice $V_{\lambda,\Z}$. Thus the pairing $\langle-,-\rangle_{a,j}$ also makes sense integrally.

%
%

\section{Eisenstein series and classes for \texorpdfstring{$\GLt$}{GL2}}

We recall the theory of Eisenstein series and classes attached to adelic Schwartz functions. In particular, we recall the motivic Eisenstein classes of Be\u{\i}linson (see e.g.\ \cite[\S7]{LSZ17}) and describe their Betti realisations via adelic Eisenstein series. In this section all symmetric spaces will be for $\GL_2$, so we write simply $Y(\cU)$ (resp.\ $Y$) for $Y^{\GL_2}(\cU)$ (resp.\ $Y^{\GL_2}$).

If $j\geq 0$, recall $V_{(0,-j)}^{\GL_2}$ denotes the $\GL_2$-representation of highest weight $(0, -j)$. Similarly, in this section only we will drop the superscript and denote this simply $V_{(0,-j)}$.

\subsection{Schwartz functions}\label{sec:schwartz}

For a field $K/\Q$, write $\cS(\A\subf^2,K)$ for the Schwartz space of locally-constant, compactly-supported functions on $\A\subf^2$ with values in $K$, and $\cS_0(\A\subf^2, K)$ for the subspace of $\Phi$ with $\Phi(0, 0) = 0$. We also let $\cS(\R^2,K)$ be the usual space of Schwartz functions on $\R^2$, write $\cS_0(\R^2,K)$ for the subspace with $\Phi(0,0) = 0$, and let $\cS(\A^2,K) = \cS(\A\subf^2,K) \times \cS(\R^2,K)$ (and similarly for $\cS_0(\A^2,K)$).  We will make specific choices at infinity, depending on an integer $j\geq 0$, and use the notation $\cS_{(0)}(\A\subf^2, K)$ to mean ``$\cS_{0}(\A\subf^2, K)$ if $j = 0$ or $\cS(\A\subf^2, K)$ otherwise''.

Let $\chi$ be a Dirichlet character, corresponding to a finite-order Hecke character $\chihat$. If $\Phif \in \cS(\A\subf^2,K)$, let $R_\chi(\Phif) \in \cS(\A\subf^2,K(\chi))$ be its projection to the $\chihat^{-1}$-isotypical component, given by
\begin{equation}\label{eq:R_chi}
R_{\chi}(\Phif)(x,y) \defeq \int_{a \in \widehat{\Z}^\times} \chihat(a)\  \Phif(ax,ay)\  \mathrm{d}^\times a.
\end{equation}
We emphasise that this is a projection operator, \emph{not} a twisting operator: if $\chi_1,\chi_2$ are distinct primitive Dirichlet characters, then $R_{\chi_1} R_{\chi_2} (\Phif) = 0$.

\subsection{Eisenstein series}\label{sec:eisenstein series}
We now review some standard definitions of Eisenstein series from Schwartz functions, both adelically and classically, and relate these definitions.

\subsubsection{Adelic Eisenstein series} \label{sec:Eis-GL2}
Let $\Phi = \Phi_\infty\cdot \Phif \in \cS(\A^2,\C)$ and $\chihat : \Q^\times\backslash \A^\times \to \C^\times$ be a Hecke character. For $g \in \GL_2(\A)$ and $\Re(s) > 0$ we define a \emph{Godement--Siegel section}
\begin{equation}\label{eq:f_Phi}
f_{\Phi}(g; \chihat, s) \defeq \|\det g\|^s \int_{\A^\times} \Phi\Big((0, a)g\Big) \chihat(a) \|a\|^{2s}\, \mathrm{d}^\times a.
\end{equation}
This admits meromorphic continuation to $\C$ and gives an element of the family of $\GL_2$-principal series representations $I(\|\cdot\|^{s-1/2}, \chihat^{-1}\|\cdot\|^{-s+1/2})$, that is, we have \[f_\Phi\left(\smallmatrd{a}{b}{}{d}g; \chihat,s\right) = \chihat^{-1}(d)\|a/d\|^{s}f_\Phi(g;\chihat, s).\]
It thus defines an element of $\cC^\infty(B_2(\Q)\backslash\GL_2(\A),\C)$, for $B_2 \subset \GL_2$ the upper-triangular Borel. Define (cf.~\cite[\S 19]{jacquet72})
\begin{equation}\label{eq:E_Phi}
E_\Phi(g; \chihat, s) \defeq  \sum_{\gamma \in B_2(\Q) \backslash \GL_2(\Q)} f_\Phi(\gamma g; \chihat, s),
\end{equation}
which converges absolutely and locally uniformly on some right half-plane (for $\Re(s) > 1$ if $\chihat$ is unitary), defining a function on the quotient $\GL_2(\Q) \backslash \GL_2(\A)$ that transforms under the centre by $\chihat^{-1}$. It has meromorphic continuation in $s$, analytic if $\Phi(0, 0) = \hat\Phi(0,0) = 0$ or if $\chihat$ is ramified at some finite place.

\subsubsection{Classical Eisenstein series} \label{sec:classical eisenstein}We now introduce classical Eisenstein series.

\begin{definition}\label{def:classical eisenstein}
\begin{itemize}\setlength{\itemsep}{0pt}
	\item[(i)] If $\Phif \in \cS_{(0)}(\A\subf^2,K)$ and $j \geq 0$, define
	\[
	\cE^{j+2}_\Phif(\tau;s) \defeq \frac{\Gamma(s+\tfrac{j+2}{2})}{(-2\pi i)^{j+2}\pi^{s-\tfrac{j+2}{2}}} \sum_{(m,n) \in \Q^2 \backslash (0,0)} \frac{\Phif(m,n) y^{s-\tfrac{j+2}{2}}}{(m\tau + n)^{j+2}|m\tau+n|^{2s-j-2}},
	\]
	where $s \in \C$ and $\tau \in \cH_{\GL_2}$. This is a classical real-analytic weight $j+2$ Eisenstein series.

	\item[(ii)] We extend this to a function on $\GL_2(\A_{\mathrm{f}}) \times \cH$ by setting
	\[
	\cE_{\Phif}^{j+2}(g\subf, \tau; s) \defeq \cE^{j+2}_{g\subf\cdot \Phif}(\tau;s).
	\]

	\item[(iii)] Finally, for a Dirichlet character $\chi$ we also define
	\[
	\cE_{\Phif}^{j+2}(\tau; \chi, s) \defeq \cE_{R_{\chi}(\Phif)}^{j+2}(\tau;s), \qquad \cE_{\Phif}^{j+2}(g\subf, \tau; \chi, s) \defeq \cE_{R_{\chi}(\Phif)}^{j+2}(g\subf, \tau;s).
	\]

\end{itemize}
We write $\cE^{j+2}_{\Phif}$ for each of these functions, but it will be clear from context which we use.
\end{definition}

\begin{remarks}\label{rem:eisenstein functional equation}
The function defined in (i) is denoted $E^{(j+2,\Phif)}(\tau;s)$ in \cite[Def.\ 7.1]{LPSZ19}. These series always converge absolutely for $\Re(s) \geq 1$ unless $j=0$ and $\chi$ is trivial, and even in this case they converge absolutely if $\widehat\Phi(0, 0) = 0$.

One has the functional equation $\cE_{\Phif}^{j+2}(\tau;s) = \cE_{\widehat{\Phi}\subf}^{j+2}(\tau;1-s)$, where $\widehat{\Phi}\subf$ is the Fourier transform (normalised as in \cite[\S8.1]{LPSZ19}); this explains the compatibility between our conventions and those of \cite[\S7]{LSZ17}.
The definition in (ii) ensures the association $\Phif \mapsto \cE_{\Phif}^{j+2}$ equivariant under $\GL_2(\A\subf)$.
\end{remarks}

Via the standard procedure (see \cite[\S I]{Wei71}), we now extend $\cE_{\Phif}^{j+2}$ to a function on $g = g\subf g_\infty \in \GL_2(\A)$. By Iwasawa decomposition, $g_\infty \in \GL_2(\R)$ can be written uniquely in the form $\smallmatrd{z}{}{}{z}\smallmatrd{y}{x}{0}{1}r(\theta),$ where $z \in \R^\times$ and $r(\theta) = \smallmatrd{\cos \theta}{\sin \theta}{-\sin \theta}{\cos \theta} \in \mathrm{SO}_2(\R)$. Then we define
\begin{align}\label{eq:classical on GL2(A)}
\cE_{\Phif}^{j+2}&(-;\chi,-) : \GL_2(\A) \times \C \longrightarrow \C,\\
(g, s) &\longmapsto \chihat_\infty^{-1}(z) \cdot |\det(g_\infty)|^{-s}\cdot y^{\tfrac{j+2}{2}} \cdot \mathrm{exp}[{i(j+2)\theta}] \cdot \cE_{\Phif}^{j+2}\left(g\subf, x+iy; \chi, s\right).\notag
\end{align}
This yields an automorphic form on $\GL_2(\A)$: the $y^{\tfrac{j+2}{2}}$ and $e^{i(j+2)\theta}$ are always present in extending from $\uhp_{\GL_2}$ to $\GL_2(\R)$ \cite{Wei71}, the $\chihat_\infty^{-1}(z)$ ensures the central character is correct, and $|\det(g_\infty)|^{-s}$ ensures $\cE_{\Phif}^{j+2}$ factors through $\GL_2(\Q)\backslash \GL_2(\A)$.

\subsubsection{Comparison of classical and adelic}\label{sec:comparison}
At infinity, for $j \geq 0$ we shall henceforth take
\begin{equation}\label{eq:schwartz infinity}
\Phi_\infty = \Phi_\infty^{j+2}(x, y) \defeq 2^{-1-j} (x + iy)^{j+2} \exp(-\pi(x^2 + y^2)).
\end{equation}
For this choice, by \cite[Prop.\ 10.1]{LPSZ19} we have:

\begin{proposition}\label{prop:compatibility}
If $\Phi = \Phi_\infty^{j+2} \cdot \Phif \in \cS(\A^2, \C)$, for some $j \ge 0$ and $\Phi_{\mathrm{f}} \in \cS(\A\subf^2, \C)$, and $\chi$ is a Dirichlet character, then the adelic and classical Eisenstein series are related by
\[
E_{\Phi}\Big(g\subf \stbt y x 0 1; \chihat, s\Big) = y^{\tfrac{j+2}{2}} \|\det g\subf\|^{s} \cdot \cE^{j+2}_{\Phi_{\mathrm{f}}}\Big(g\subf, x + iy; \chi, s\Big)
\]
for $g\subf \in \GL_2(\A\subf)$ and $x + iy \in \cH_{\GL_2}$.
\end{proposition}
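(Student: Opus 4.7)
The plan is a standard unfolding computation that factorises the adelic Eisenstein series into archimedean and non-archimedean pieces, reducing the archimedean part to a Mellin transform of a Gaussian.

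First, I would apply the unfolding trick. Since $B_2(\QQ)\backslash \GL_2(\QQ) \cong (\QQ^2\setminus 0)/\QQ^\times$ via $\gamma \mapsto [(0,1)\gamma]$, and $\chihat(a)\|a\|^{2s}$ is trivial on the image of the diagonal $\QQ^\times \hookrightarrow \A^\times$, the double sum-integral defining $E_\Phi$ collapses to
\[
E_\Phi(g; \chihat, s) = \|\det g\|^s \int_{\QQ^\times\backslash \A^\times} \sum_{v \in \QQ^2\setminus 0} \Phi(avg)\, \chihat(a)\|a\|^{2s}\, d^\times a.
\]
This identity is valid in the range of absolute convergence and extends to meromorphic $s$ by analytic continuation.

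Second, I would use the decomposition $\QQ^\times\backslash \A^\times \cong \Zhat^\times \times \R_{>0}$ to write $a = u\cdot a_\infty$. Substituting $\Phi = \Phi_\infty^{j+2}\cdot \Phif$ and $g = g\subf \stbt{y}{x}{0}{1}$, the integrand factorises. The finite piece, integrated over $\Zhat^\times$, is exactly $R_\chi(g\subf\cdot \Phif)(m,n)$ by the definition of $R_\chi$ in \eqref{eq:R_chi} together with the rule $(g\subf\cdot\Phif)(v) = \Phif(vg\subf)$.

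Third, for the archimedean piece, compute $(m,n)\stbt{y}{x}{0}{1} = (my, mx+n)$ and apply the key identity $my + i(mx+n) = i\cdot \overline{m\tau+n}$ to obtain
\[
\Phi_\infty^{j+2}(a_\infty my,\, a_\infty(mx+n)) = 2^{-1-j}\,i^{j+2}\,a_\infty^{j+2}\,\overline{m\tau+n}^{\,j+2}\, e^{-\pi a_\infty^2 |m\tau+n|^2}.
\]
The remaining Mellin integral $\int_0^\infty a_\infty^{j+2+2s}e^{-\pi a_\infty^2|m\tau+n|^2}\,d^\times a_\infty$ is evaluated, via the substitution $t = \pi a_\infty^2|m\tau+n|^2$, to $\tfrac{1}{2}\Gamma\bigl(s + \tfrac{j+2}{2}\bigr)\pi^{-s-(j+2)/2}|m\tau+n|^{-(j+2)-2s}$.

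Finally, the identity $\overline{m\tau+n}^{\,j+2} = |m\tau+n|^{2(j+2)}/(m\tau+n)^{j+2}$ brings each summand into the shape of \cref{def:classical eisenstein}(i). The accumulated constants $2^{-1-j}$, $i^{j+2}$, $\tfrac{1}{2}$, and the archimedean powers of $\pi$ then cancel precisely against the normalising factor $\Gamma(s+\tfrac{j+2}{2})/\bigl((-2\pi i)^{j+2}\pi^{s-(j+2)/2}\bigr)$ in the classical definition, while $y^s/y^{s-(j+2)/2} = y^{(j+2)/2}$ produces the displayed prefactor. The step I expect to require the most care is this final bookkeeping of constants: it shows \emph{why} the normalisation $2^{-1-j}(x+iy)^{j+2}e^{-\pi(x^2+y^2)}$ of $\Phi_\infty^{j+2}$ is precisely calibrated so that every extraneous factor vanishes, and any small slip in signs or powers of $2\pi i$ would propagate through.
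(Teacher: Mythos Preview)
Your proof is correct. The paper itself does not prove this proposition but simply cites \cite[Prop.~10.1]{LPSZ19}; your direct unfolding computation is exactly the standard argument that lies behind that citation, and your bookkeeping of the constants (in particular the identity $i^{2(j+2)} = (-1)^{j+2}$ that makes $2^{-2-j}i^{j+2}\pi^{-s-(j+2)/2}$ match $(-2\pi i)^{-(j+2)}\pi^{-s+(j+2)/2}$) checks out.
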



\begin{corollary}\label{cor:compatibility}
When $\Phi_\infty = \Phi_\infty^{j+2}$, for $g \in \GL_2(\A)$ we have
\[
E_\Phi(g; \chihat, s) = \|\det(g)\|^s \cE_{\Phif}^{j+2}(g;\chi, s).
\]
\end{corollary}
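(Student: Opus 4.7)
The plan is to reduce to Proposition~\ref{prop:compatibility} using two elementary right-equivariance properties of $E_\Phi$, and then to match factors against the defining formula \eqref{eq:classical on GL2(A)} of $\cE_{\Phif}^{j+2}$ on $\GL_2(\A)$.

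First I would record the two equivariances. For $z \in Z(\GL_2(\A))$, identified with $\A^\times$, substituting $a \mapsto a/z$ in the integral \eqref{eq:f_Phi} gives $f_\Phi(gz;\chihat,s) = \chihat(z)^{-1} f_\Phi(g;\chihat,s)$, and summing over $B_2(\Q)\backslash\GL_2(\Q)$ yields $E_\Phi(gz;\chihat,s) = \chihat(z)^{-1} E_\Phi(g;\chihat,s)$. For $r(\theta) \in \mathrm{SO}_2(\R)$, if $(u',v') = (u,v)r(\theta)$ then $u'+iv' = (u+iv)e^{i\theta}$ while $u'^2+v'^2 = u^2+v^2$, so the explicit choice $\Phi_\infty = \Phi_\infty^{j+2}$ in \eqref{eq:schwartz infinity} gives $\Phi_\infty^{j+2}((u,v)r(\theta)) = e^{i(j+2)\theta}\Phi_\infty^{j+2}(u,v)$; this propagates through the integrand in \eqref{eq:f_Phi} (at the Archimedean place only) to give $E_\Phi(gr(\theta);\chihat,s) = e^{i(j+2)\theta} E_\Phi(g;\chihat,s)$.

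Next, using the Iwasawa decomposition $g_\infty = \stbt{z}{0}{0}{z}\stbt{y}{x}{0}{1}r(\theta)$ from the paragraph preceding \eqref{eq:classical on GL2(A)}, and letting $z$ also denote the element of $Z(\GL_2(\A))$ that is trivial at finite places, the two equivariances above combined with Proposition~\ref{prop:compatibility} applied to $g\subf \stbt{y}{x}{0}{1}$ yield
\[ E_\Phi(g;\chihat,s) \;=\; \chihat_\infty(z)^{-1}\, e^{i(j+2)\theta}\, y^{(j+2)/2}\, \|\det g\subf\|^s\, \cE^{j+2}_{\Phif}(g\subf, x+iy;\chi,s). \]

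Finally I would expand the target $\|\det g\|^s \cE^{j+2}_{\Phif}(g;\chi,s)$ using $\|\det g\|^s = \|\det g\subf\|^s \cdot |\det g_\infty|^s$ with $|\det g_\infty| = z^2 y$, together with the definition \eqref{eq:classical on GL2(A)}; the two factors $|\det g_\infty|^{\pm s}$ cancel and one obtains precisely the right-hand side of the displayed formula above. Nothing here is hard: the Archimedean prefactors $\chihat_\infty^{-1}(z)$, $|\det g_\infty|^{-s}$, $y^{(j+2)/2}$, $e^{i(j+2)\theta}$ were introduced in \eqref{eq:classical on GL2(A)} exactly so that this compatibility with the adelic Eisenstein series goes through, and the proof reduces to a bookkeeping check that they do.
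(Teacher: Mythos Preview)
Your proof is correct and follows essentially the same route as the paper's: both arguments use the central-character equivariance $E_\Phi(gz;\chihat,s)=\chihat_\infty(z)^{-1}E_\Phi(g;\chihat,s)$ together with the $\mathrm{SO}_2(\R)$-equivariance $E_\Phi(gr(\theta);\chihat,s)=e^{i(j+2)\theta}E_\Phi(g;\chihat,s)$ (the paper cites \cite[\S3,(7.35)]{Bump} for the latter), reduce via Iwasawa to Proposition~\ref{prop:compatibility}, and match against the defining formula~\eqref{eq:classical on GL2(A)}. Your write-up is simply more explicit about verifying the two equivariances and tracking the $|\det g_\infty|^{\pm s}$ cancellation than the paper's terse ``the result follows.''
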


\begin{proof}
We have $E_\Phi(g  r(\theta), \chihat, s) = e^{i(j+2)\theta}E_{\Phi}(g,\chihat, s)$ (cf.\ \cite[\S3,(7.35)]{Bump}); and $E_{\Phi}$ left-translates as $\chihat^{-1}_\infty(z)$ under $\smallmatrd{z}{}{}{z} \in \GL_2(\R)$. The result follows from \cref{prop:compatibility}.
\end{proof}

\subsubsection{The special value $s = -j/2$}\label{sec:special value j/2}
We shall be chiefly interested in the special value $s = -j/2$ (cf.\ \cref{thm:motivic classes} below). Classically, this has the particularly nice form
\[
\cE^{j+2}_\Phif(\tau,-\tfrac{j}{2}) = \frac{(j+1)!}{(-2\pi i)^{j+2}}\sum_{(m,n) \in \Q^2 \backslash (0,0)} \frac{\widehat{\Phi}\subf(m,n)}{(m\tau + n)^{j+2}},
\]
where $\widehat{\Phi}\subf$ is the Fourier transform of $\Phi\subf$. This function is denoted $F^{j+2}_{\Phif}$ in \cite{LSZ17}. Passing to the adeles, this motivates the definitions
\begin{align}
\cE_{\Phif}^{j+2} : \GL_2(\A) &\longrightarrow \C, \hspace{12pt} g \longmapsto \cE_{\Phif}^{j+2}(g; -\tfrac{j}{2}),\notag \\
\cE_{\Phif}^{j+2,\chi} : \GL_2(\A) &\longrightarrow \C, \hspace{12pt} g \longmapsto \cE_{\Phif}^{j+2}(g;\chi, -\tfrac{j}{2}). \label{eq:classical E j chi}
\end{align}
Via \cref{cor:compatibility}, we are led to also consider
\begin{align}
\label{eq:E j chi}  E_{\Phi}^{j+2,\chi} : \GL_2(\A) &\longrightarrow \C, \hspace{12pt} g \longmapsto \|\det g\|^{j/2} E_\Phi\left(g;\chihat, -\tfrac{j}{2}\right).
\end{align}
Then by \cref{cor:compatibility}, we have
\begin{equation}\label{eq:compatibility}
\cE_{\Phif}^{j+2,\chi}(g) = E_{\Phif}^{j+2,\chi}(g).
\end{equation}

\begin{remark}
The functions $\cE_{\Phif}^{j+2,\chi}(g) = E_{\Phif}^{j+2,\chi}(g)$ depend $\GL_2(\A\subf)$-equivariantly on $\Phi$ and transform as elements of the global principal series representation
\begin{equation}\label{eq:I_j(chi)}
	I_j(\chihat) \defeq I\Big(\|\cdot\|^{-\tfrac{1}{2}}, \chihat^{-1}\|\cdot\|^{j+\tfrac{1}{2}}\Big) = \sideset{}{'}\bigotimes_{v < \infty} I_j(\chihat_v).
\end{equation}
Note $I_j(\chihat)$ is irreducible if $j > 0$. For $j = 0$ it does not even have finite length, since infinitely many of the local factors $I_0(\widehat\chi_v)$ are reducible.
\end{remark}

\subsection{Betti--Eisenstein classes}\label{sec:Betti-Eisenstein}
\subsubsection{Local systems and realisation maps}$\qquad$

\emph{Betti local systems:}
For $j \ge 0$, we let $\LSkn = \LSk$ denote the local system on the $\GL_2$ symmetric space associated to the $\GL_2$-representation $V_{(0,-j)}$ of highest weight $(0, -j)$. Note that if $a \in \Q_{> 0}$, $\stbt a 0 0 a$ acts on $\h^1(Y, \LSkn)$ as multiplication by $a^{-j}$.

\emph{De Rham local systems:} \label{sec:dR}To compare our (Betti)-Eisenstein classes to the classical Eisenstein classes of Harder, we go through a comparison to de Rham cohomology. The local system $\LSkn(\C)$ comprises the flat sections of a vector bundle $\sV_{(0,-j),\mathrm{dR}}$ with respect to the Gauss--Manin connection (see \cite[\S 1.18]{pink90}). This is defined over $\Q$, and we have a comparison isomorphism
\begin{equation}\label{eq:betti de Rham comparison}
C_{\mathrm{B,dR}} : \h^1\left(Y(\cU), \LSkn(\Q)\right) \otimes_{\Q} \C \isorightarrow \h^1_{\mathrm{dR}}\left(Y(\cU), \sV_{(0,-j),\mathrm{dR}}\right) \otimes_{\Q}\C.
\end{equation}
The pullback of $\sV_{(0,-j),\mathrm{dR}}$ to the upper half-plane is $\mathrm{Sym}^j\otimes \det^{-j}$ of the relative de Rham cohomology of $\C/(\Z+\tau\Z)$, so has a canonical section $(2 \pi i \mathrm{d}z)^{\otimes j}$, for $z$ a co-ordinate on $\C$.

\emph{Motivic local systems:}
Our Betti--Eisenstein classes will be the Betti realisation of Be\u{\i}linson's motivic Eisenstein classes. For any level $\cU$, there is a relative Chow motive $\sV_{(0,-j),\mathrm{mot}}(\Q)$ over $Y(\cU)$ attached to the representation $V_{(0,-j)}$ of $\GL_2/\Q$; this gives a coefficient system for motivic cohomology. Moreover, the Chow motive has a $\GL_2(\A\subf)$-equivariant structure, so its cohomology has a natural Hecke action.

Then we have realisations
\begin{align*}
r_{\mathrm{B}} : \h^1_{\mathrm{mot}}\left(Y(\cU), \sV_{(0,-j),\mathrm{mot}}(1)\right) &\longrightarrow \h^1\left(Y(\cU), \sV_{(0,-j)}(\Q)(1)\right)\\
&\hspace{40pt} \isorightarrow \h^1\left(Y(\cU), \sV_{(0,-j)}(\Q)\right), \\
r_{\mathrm{dR}} : \h^1_{\mathrm{mot}}\left(Y(\cU), \sV_{(0,-j),\mathrm{mot}}(1)\right) &\longrightarrow \h^1_{\mathrm{dR}}\left(Y(\cU), \sV_{(0,-j),\mathrm{dR}}\right).
\end{align*}
Base-changing to $\C$-coefficients, these are related by
\begin{equation}\label{eq:comparison}
C_{\mathrm{B,dR}}\circ r_{\mathrm{B}} = (2\pi i)^{-j-1} \cdot r_{\mathrm{dR}}
\end{equation}
(see e.g. \cite[\S2.2]{KLZ20} or \cite[pf.\ of Prop.\ 5.2]{LW18}, noting $T_j(j+1)$ \emph{op.\ cit.} is $\sV_{(0,-j)}(1)$ here).

\subsubsection{Betti--Eisenstein classes}
The main input in our construction is Be\u{\i}linson's family of motivic Eisenstein classes \cite{Bei86}, which we briefly summarise (cf.\ \cite[Thm.\ 7.2.2]{LSZ17}).

\begin{theorem}[Be\u{\i}linson] \label{thm:motivic classes}
For any $j \ge 0$, and any level $\cU$, there is a canonical $\GL_2(\A\subf)$-equivariant map, the \emph{motivic Eisenstein symbol},
\[
\cS_{(0)}(\A\subf^2, \Q)^{\cU}\to \h^1_{\mathrm{mot}}\left(Y(\cU), \sV_{(0,-j),\mathrm{mot}}(1)\right),\qquad \Phif \mapsto \Eis^j_{\mathrm{mot},\Phif},
\]
compatible with Hecke operators and changing $\cU$, such that the pullback to the upper half-plane of $r_{\mathrm{dR}}(\mathrm{Eis}_{\mathrm{mot},\Phif}^j)$ is the differential form $-\cE^{j+2}_\Phif(\tau;-\tfrac{j}{2}) \cdot (2\pi i\mathrm{d}z)^{\otimes j} \cdot 2\pi i\mathrm{d}\tau$ (cf.\ \cref{sec:dR}).
\end{theorem}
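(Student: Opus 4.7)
The plan is to invoke Be\u{\i}linson's Eisenstein symbol \cite{Bei86}, repackaged in the form stated as in \cite[Thm.\ 7.2.2]{LSZ17}; I would follow the latter reference almost verbatim. The statement has two distinct parts: the construction of the $\GL_2(\Af)$-equivariant motivic map with its functoriality properties, and the identification of its de Rham realisation.

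For the construction, fix $N \ge 1$ large enough that $\Phif$ factors through $(N^{-1}\ZZ / \ZZ)^2$ and $\cU \supset \cU_1(N)$ at level $N$. Write $\pi^{(j)} : \mathcal{E}^j \to Y_1(N)$ for the $j$-fold self-fibre product of the universal elliptic curve (the case $j = 0$ being trivial). Be\u{\i}linson identifies $\sV_{(0,-j),\mathrm{mot}}$ as the image of an idempotent for $\Sigma_j \ltimes \{\pm 1\}^j$ acting on $R^j \pi^{(j)}_* \QQ(j)$; applying Leray then yields a natural push-forward
\[
H^{j+1}_{\mathrm{mot}}\bigl(\mathcal{E}^j, \QQ(j+1)\bigr) \longrightarrow H^1_{\mathrm{mot}}\bigl(Y_1(N), \sV_{(0,-j),\mathrm{mot}}(1)\bigr).
\]
On the source, the characteristic function of a point $(a,b) \in (N^{-1}\ZZ/\ZZ)^2 \setminus \{(0,0)\}$ determines an $N$-torsion section $t_{a,b} : Y_1(N) \to \mathcal{E}$, and its $j$-fold product $t_{a,b}^{\times j}$ gives rise to a canonical motivic class via the ``horospherical map'' construction of \cite[\S3]{Bei86} (built from Siegel units when $j = 0$, and from their higher-weight $K$-theoretic analogues when $j \ge 1$). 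Extending $\QQ$-linearly defines $\Eis^j_{\mathrm{mot},\Phif}$ on characteristic functions, and hence on all of $\cS_{(0)}(\Af^2, \QQ)^{\cU}$. Independence of the choice of $N$, $\GL_2(\Af)$-equivariance, Hecke-compatibility, and compatibility with varying $\cU$ all reduce to a direct check on the characteristic-function building blocks, using that pullback under an isogeny of $\mathcal{E}$ realises the natural $\GL_2(\Af)$-action on $\Af^2$.

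The main obstacle is the de Rham computation. Following \cite[\S7.3]{LSZ17}, I would compute the absolute Hodge realisation first (where the polylogarithm formulae are explicit) and then project to $r_{\mathrm{dR}}$. Pulling back to the upper half-plane and applying the symmetrisation idempotent reduces the problem to identifying a specific $(\mathrm{Sym}^j \sH^1_{\mathrm{dR}})$-valued holomorphic $1$-form with $-\cE^{j+2}_{\Phif}(\tau;-j/2) \cdot (2\pi i\,\mathrm{d}z)^{\otimes j} \cdot 2\pi i\,\mathrm{d}\tau$; this is carried out by matching $q$-expansions, essentially a direct (if slightly tedious) calculation of Be\u{\i}linson--Levin type for the elliptic polylogarithm. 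The most error-prone aspect is the bookkeeping of the $(2\pi i)$-factors, the overall sign, and the fact that the ``motivic'' specialisation is at $s = -j/2$ rather than $s = 1 + j/2$; these must all be matched carefully against the comparison isomorphism \eqref{eq:comparison} and the Gamma-factor normalisation in \cref{def:classical eisenstein}. With care, the stated formula emerges.
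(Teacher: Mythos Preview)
The paper does not prove this theorem: it is stated as a known result of Be\u{\i}linson, with a pointer to \cite{Bei86} and the formulation in \cite[Thm.\ 7.2.2]{LSZ17}, and no further argument is given. Your proposal follows exactly those same sources, so there is no discrepancy in approach; you are simply supplying a sketch where the paper is content to cite.
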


\begin{corollary}\label{cor:betti-eisenstein}
For any $j \ge 0$, and any level $\cU$, there is a canonical $\GL_2(\A\subf)$-equivariant map, the \emph{Betti--Eisenstein symbol},
\[
\cS_{(0)}(\A\subf^2, \Q)^{\cU}\to \h^1\left(Y(\cU), \LSkn(\Q)\right),\qquad \Phif \mapsto \Eis^j_{\Phif} \defeq r_{\mathrm{B}}(\mathrm{Eis}_{\mathrm{mot},\Phif}^j),
\]
compatible with Hecke operators and changing $\cU$, such that $C_{\mathrm{B,dR}}(\mathrm{Eis}_\Phif^j)$ is the differential form
$-\cE^{j+2}_\Phif(g) \cdot (\mathrm{d}z)^{\otimes j} \cdot \mathrm{d}\tau$ on $Y(\cU)$.
\end{corollary}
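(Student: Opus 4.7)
I would define $\mathrm{Eis}^j_\Phif \defeq r_{\mathrm{B}}(\mathrm{Eis}^j_{\mathrm{mot},\Phif})$. The $\GL_2(\Af)$-equivariance, Hecke-compatibility, and compatibility with change of level of the assignment $\Phif \mapsto \mathrm{Eis}^j_\Phif$ then follow immediately from the corresponding statements for the motivic symbol in \cref{thm:motivic classes}, together with the functoriality of the Betti realisation $r_{\mathrm{B}}$. Thus the only substantive content of the corollary is the identification of the image of $\mathrm{Eis}^j_\Phif$ under $C_{\mathrm{B,dR}}$ with the explicit form.

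For this, I would first invoke the comparison formula \eqref{eq:comparison} to write
\[
C_{\mathrm{B,dR}}(\mathrm{Eis}^j_\Phif) = C_{\mathrm{B,dR}} \circ r_{\mathrm{B}}(\mathrm{Eis}^j_{\mathrm{mot},\Phif}) = (2\pi i)^{-j-1} \cdot r_{\mathrm{dR}}(\mathrm{Eis}^j_{\mathrm{mot},\Phif}).
\]
By \cref{thm:motivic classes}, the pullback of $r_{\mathrm{dR}}(\mathrm{Eis}^j_{\mathrm{mot},\Phif})$ to the upper half-plane is the differential form $-\cE^{j+2}_\Phif(\tau;-\tfrac{j}{2}) \cdot (2\pi i\, \mathrm{d}z)^{\otimes j} \cdot 2\pi i\, \mathrm{d}\tau$. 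Multiplying by $(2\pi i)^{-j-1}$ cancels exactly the $(2\pi i)^{j+1}$ built into the de Rham form, leaving $-\cE^{j+2}_\Phif(\tau;-\tfrac{j}{2}) \cdot (\mathrm{d}z)^{\otimes j} \cdot \mathrm{d}\tau$. Finally, by the convention introduced in \eqref{eq:classical E j chi}, the symbol $\cE^{j+2}_\Phif$ with no $s$-argument denotes the special value at $s = -j/2$, and the passage from the function $\cE^{j+2}_\Phif$ on $\GL_2(\A)$ to a form on $Y(\cU)$ is precisely the descent built into \cref{def:classical eisenstein}(ii). This matches the claimed form.

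The main obstacle — Be\u{\i}linson's construction of the motivic Eisenstein symbol with the specified de Rham realisation — is already packaged into \cref{thm:motivic classes}. What remains is essentially a bookkeeping exercise in normalisations; the only delicate point is that the factor $(2\pi i)^{-j-1}$ appearing in \eqref{eq:comparison} matches exactly the total $(2\pi i)^{j+1}$ coming from the $j$ copies of $2\pi i\, \mathrm{d}z$ together with the single $2\pi i\, \mathrm{d}\tau$ in the de Rham form. That this matches is precisely the reason the Betti normalisation of the Eisenstein symbol is cleaner, and is the only check required.
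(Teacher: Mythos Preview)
Your proposal is correct and follows essentially the same approach as the paper. The only place where the paper is slightly more careful is the final step: having identified the pullback of $C_{\mathrm{B,dR}}(\mathrm{Eis}^j_\Phif)$ to the upper half-plane as $-\cE^{j+2}_{\Phif}(\tau;-\tfrac{j}{2}) \cdot (\mathrm{d}z)^{\otimes j} \cdot \mathrm{d}\tau$, one must still explain why this pins down the form on all of $Y(\cU)$, not just on one fibre. The paper does this by noting that the extension of the coefficient function to $\GL_2(\Af)\times\cH$ is \emph{unique} subject to $\GL_2(\Af)$-equivariance (this is what \cref{def:classical eisenstein}(ii) was set up to satisfy), and the further extension to $\GL_2(\A)$ in \eqref{eq:classical on GL2(A)} is the unique automorphic one; hence the form on $Y(\cU)$ must have coefficient $\cE^{j+2}_{\Phif}(g;-\tfrac{j}{2}) = \cE^{j+2}_{\Phif}(g)$. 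Your phrase ``the passage \dots\ is precisely the descent built into \cref{def:classical eisenstein}(ii)'' gestures at this, but runs in the wrong direction (you speak of descending from $\GL_2(\A)$ to $Y(\cU)$, whereas the argument is about uniquely extending from $\cH$ to $Y(\cU)$) and cites only the extension to $\GL_2(\Af)\times\cH$, omitting the further step \eqref{eq:classical on GL2(A)}. This is a presentational rather than mathematical issue; the substance of your argument matches the paper's.
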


\begin{proof}
By \eqref{eq:comparison}, the pullback to the upper half-plane of $C_{\mathrm{B,dR}}(\mathrm{Eis}_\Phif^j)$ is $-\cE^{j+2}_{\Phif}(\tau;-\tfrac{j}{2}) \cdot (\mathrm{d}z)^{\otimes j} \cdot \mathrm{d}\tau$. The extension of $\cE_{\Phif}^{j+2}$ to $\GL_2(\A\subf) \times \cH$ in \cref{def:classical eisenstein}(ii) is the unique one preserving $\GL_2(\A\subf)$-equivariance, and the further extension of this to $\GL_2(\A)$ in \eqref{eq:classical on GL2(A)} is the unique one that is automorphic; it follows that the extension of the differential to $Y(U)$ has form $-\cE^{j+2}_{\Phif}(g;-\tfrac{j}{2})\otimes (\mathrm{d}z)^j \otimes \mathrm{d}\tau$. But $\cE^{j+2}_{\Phif}(g) = \cE^{j+2}_{\Phif}(g;-\tfrac{j}{2})$ by definition.
\end{proof}

\subsubsection{Integrality}\label{sec:Eisenstein integrality}
In general these Eisenstein symbols do not take values in the integral cohomology. However, we can work around this as follows. Let $c > 1$ be coprime to $6$, and let $\cU$ be a level of the form $\cU^{(c)} \times \prod_{\ell \mid c} \GL_2(\Z_\ell)$. We write ${}_c \cS(\A\subf^2, \Z)$ for the $\Z$-valued Schwartz functions of the form $\Phi\subf^{(c)} \times \prod_{\ell \mid c} \operatorname{ch}(\Z_\ell^2)$, and similarly ${}_c \cS_0(\A\subf^2, \Z)$.

\begin{theorem}\label{thm:integral betti eisenstein}
There exist homomorphisms
\[
{}_c \cS_{(0)}(\A\subf^2, \Z)^{\cU}\to \h^1\left(Y(\cU), \LSkn({\Z})\right),\qquad \Phif \mapsto {}_c\Eis^j_{\Phif},
\]
compatible with Hecke operators and changing $\cU$, such that after base-extending to $\Q$ we have
\[ {}_c\Eis^j_{\Phif} = \left(c^2 - c^{-j} \stbt{c}{0}{0}{c}^{-1}\right)\Eis^j_{\Phif} \]
where $\stbt{c}{0}{0}{c}$ is understood as an element of $\GL_2(\A\subf^{(c)})$.
\end{theorem}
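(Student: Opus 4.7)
The plan is to $c$-smooth Be\u{\i}linson's Eisenstein symbol, following the approach of Kato and Kings. The construction splits naturally into two cases according to the weight.

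For $j=0$, I would work with Siegel units directly. To $\Phif \in {}_c\cS_0(\Af^2,\ZZ)^{\cU}$ one attaches a canonical $c$-smoothed Siegel unit ${}_c u_\Phif \in \cO(Y(\cU))^\times$ in a $\GL_2(\Af)$-equivariant way; the crucial point is that this is a genuine unit (not merely an element of $\cO(Y(\cU))^\times \otimes \QQ$), thanks to the $c$-smoothing combined with the assumption that $\Phif$ is the characteristic function of $\ZZ_\ell^2$ at primes $\ell \mid c$. The integral class ${}_c\Eis^0_\Phif$ is then the image of ${}_c u_\Phif$ under the Kummer map $\cO(Y(\cU))^\times \to H^1(Y(\cU), \ZZ(1)) \cong H^1(Y(\cU), \sV_{(0,0)}(\ZZ))$, and the identity ${}_c\Eis^0_\Phif = (c^2 - [c]^*)\Eis^0_\Phif$ after $\otimes\QQ$ is the classical distribution relation for Siegel units.

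For $j \geq 1$, I would bootstrap from the $j=0$ case via the Eisenstein polylogarithm. Concretely, the Siegel-unit classes ${}_c\Eis^0_{\Phif_n}$, for a compatible tower $\{\Phif_n\}$ of Schwartz functions obtained by raising the level at primes dividing $c$, assemble into an inverse-limit class valued in a sheaf of integral Iwasawa algebras over $Y(\cU)$. The weight-$j$ moment map is defined integrally and Hecke-equivariantly, and descends this inverse limit to a class ${}_c\Eis^j_\Phif \in H^1(Y(\cU), \LSkn(\ZZ))$. Comparison with Be\u{\i}linson's rational Eisenstein symbol (\cref{thm:motivic classes}) then yields the stated formula, the factor $c^{-j}$ accounting for the twist introduced by the moment map between the universal Iwasawa sheaf and $\LSkn$.

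Hecke- and level-compatibilities are inherited from their analogues for Siegel units and for the moment map. The main obstacle is establishing integrality of the higher-weight classes in the Betti setting: a priori Be\u{\i}linson's symbol only produces rational classes. One resolves this either via Kings' integral \'etale Eisenstein classes (invoked prime-by-prime and then patched, using the fact that $\left(c^2 - c^{-j}\stbt{c}{0}{0}{c}^{-1}\right)\Eis^j_\Phif$ is a well-defined rational class independent of the auxiliary prime, hence integral locally everywhere), or by the direct Iwasawa-theoretic construction via the Eisenstein polylogarithm sketched above.
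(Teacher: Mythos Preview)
The paper states this theorem without proof, treating it as a known input from the literature (the construction goes back to Kato for $j=0$ and to Kings for higher weights; see e.g.\ \cite[\S7--9]{LSZ17} for an account in the present notation). Your outline matches the standard argument and is correct in its broad strokes: Siegel units plus the Kummer map for $j=0$, then the polylogarithm/moment-map machinery to propagate integrality to higher $j$.

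One point needs correction. You describe the tower $\{\Phi_{\mathrm{f},n}\}$ as ``obtained by raising the level at primes dividing $c$'', but this is not how the construction works: the $c$-smoothing is a fixed operation, and at primes $\ell \mid c$ the Schwartz function remains $\operatorname{ch}(\ZZ_\ell^2)$ throughout. The tower underlying the moment map is instead built at an auxiliary prime $p$ (as in \cref{sec:Eisenstein interpolation} of the paper), or --- in the geometric version --- from the multiplication-by-$N$ maps on the universal elliptic curve. This is not merely cosmetic: as stated, your tower would not be norm-compatible in the required sense.

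Your final paragraph correctly identifies the genuine difficulty, namely that the Iwasawa/moment-map construction is $p$-adic and gives $\Zp$-integrality for one prime at a time, whereas the theorem asserts $\ZZ$-integrality. The patching argument you sketch (the rational class $(c^2 - c^{-j}[c]^{-1})\Eis^j_{\Phif}$ is independent of the auxiliary prime, and is $\Zp$-integral for every $p$, hence lies in the image of integral cohomology) is valid and is one route taken in the literature. The alternative, which avoids patching, is Kings' direct construction of the integral motivic polylogarithm on the universal elliptic curve.
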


\subsubsection{$p$-adic interpolation}\label{sec:Eisenstein interpolation}

Let us now fix a prime $p$, an open compact $\cU^{(p)} \subseteq \GL_2(\A\subf^{(p)})$, and a prime-to-$p$ Schwartz function $\Phi^{(p)} \in \cS(\A\subf^{(p)},\Z)^{\cU^{(p)}}$. We suppose $c$ is coprime to $p$, and that $\cU$ and $\Phi^{(p)}$ are unramified at the primes dividing $c$. After tensoring with $\Zp$, we can take the local systems $\sV_{(0,-j)}^{\GL_2}$ (and the class ${}_c \mathrm{Eis}_{\Phif}^j$) to have $\Zp$-coefficients.

For $t \geq 0$, let $\Phi_{p,t} \defeq  \operatorname{ch}[ (0, 1) + p^t\Zp^2]$, a Schwartz function at $p$. The Schwartz functions $\Phift \defeq \Phi^{(p)}\cdot \Phi_{p,t}$, for $t \ge 1$, are stable under the group $\cU_1(p^t) = \cU^{(p)} \cdot \{ \stbt{*}{*}{0}{1} \bmod p^t\}$, and compatible with the trace maps for varying $t$. Extending from $\Z$- to  $\Zp$-coefficients, we obtain an \emph{Eisenstein--Iwasawa class}
\[ {}_c \mathcal{EI}_{\Phi^{(p)}} \defeq ({}_c\mathrm{Eis}^0_{\Phi_{\mathrm{f},t}})_t \in \varprojlim_t \h^1\left(Y(\cU_1(p^t)), \Zp\right) \eqcolon \h^1_{\mathrm{Iw}}\left(Y(\cU_1(p^\infty)), \Zp\right) .\]

As explained in \cite[\S 9.1]{LSZ17}, there are natural ``moment'' maps
\[
\mom^j_t : \h^1_{\mathrm{Iw}}\left(Y(\cU_1(p^\infty)), \Zp\right) \to \h^1\left(Y(\cU_1(p^t)), \LSkn({\Zp})\right)
\]
for each $j \ge 0$ and $t \ge 0$; and if $t \ge 1$, we have
\[
\mom^j_t\left({}_c \mathcal{EI}_{\Phi^{(p)}}\right)= {}_c \Eis^j_{\Phift}.
\]
This is true by definition for $j = 0$; that it also holds for $j > 0$ is a deep theorem due to Kings. There is a similar statement for $t = 0$, but we need to replace the group $\cU_1(p^t)$ with the $\GL_2$ Iwahori subgroup $\cU_0(p)$. All of the above structures are compatible with the action of $\GL_2(\A\subf^{(pc)})$.

Note that the Iwasawa cohomology group containing ${}_c \mathcal{EI}_{\Phi^{(p)}}$ is a module over the Iwasawa algebra $\Lambda = \Zp[\![\Zp^\times]\!]$, and the moment map $\mom^j_t$ factors through the quotient where $(1 + p^t \Zp)^\times$ acts via $x \mapsto x^j$. If $\Phi^{(p)}$ belongs to the $\chihat$-eigenspace for the action of the centre, for some Dirichlet character $\chi$ of prime-to-$p$ conductor (taking values in a finite extension $\cO$ of $\Zp$), then the element
\[ \mathcal{EI}_{\Phi^{(p)}} \defeq \left(c^2 - c^{-\mathbf{j}} \chi(c) \right)^{-1} \otimes {}_c \mathcal{EI}_{\Phi^{(p)}} \in \h^1_{\mathrm{Iw}} \otimes \operatorname{Frac}(\Lambda) \]
is independent of $c$, where $\mathbf{j}$ is the universal character $\Zp^\times \hookrightarrow \Lambda^\times$. In particular, if $\chi$ is non-trivial, then we can choose $c$ so that the above factor is invertible in $\Lambda \otimes \Qp$, and hence define $\mathcal{EI}_{\Phi^{(p)}}$ as an element of $\h^1_{\mathrm{Iw}} \otimes_{\Zp} \Qp$. (We can even work integrally if $\chi \neq \mathbf{1} \newmod{p}$).


\section{Constructing the measure}\label{sec:construction}


We fix henceforth a prime $p$, allowing $p = 2$.
We now construct a $p$-adic measure interpolating pushforwards (to $\GL_3$) of Betti--Eisenstein classes. For this, we use the norm compatibility of these classes to systematically control their denominators after pushforward. Crucially, the measure we construct will be valued in the dual to degree 2 compactly supported cohomology for $\GL_3$.

\begin{notation}\label{not:construction}
We work with a fixed RACAR $\Pi$ in mind, of weight $\lambda = (a,0,-a)$, with $a \in \Z_{\geq 0}$. Our $p$-adic $L$-function will interpolate the critical $L$-values in the left half $\Crit_p^-(\Pi)$ of the critical strip. This will correspond to privileging the parabolic $P_1 \subset \GL_3$, with associated Hecke operator $U_{p,1}$, and considering $P_1$-refinements as in \cref{sec:ordinarity}. Let $\sigma_p$ be a $P_1$-refinement of $\Pi$; via \cref{prop:nearly-ordinary vs ordinary}, replacing $\Pi$ by a character twist if necessary, we may suppose that $\sigma_p$ is unramified. Our local level at $p$ will be
\begin{equation}\label{eq:U_p measure}
\cU_p \defeq \cU_{1,p}^{(P_1)}(p^{r(\Pi,\alpha_p)}) \subset \GL_3(\Zp)
\end{equation}
(see \eqref{eq:U_p(Pi)}, for $r(\Pi,\alpha_p)$ as in \cref{prop:P_1 conductor}, recalling $\alpha_p = \sigma_p(p)$).

Let $\Phi^{(p)} \in \cS(\A\subf^{(p),2},\Z)$. We fix a tame level $\cU^{(p)} \subset \GL_3(\A\subf^{(p)})$ such that $\Phi^{(p)}$ is fixed by $\cU^{(p)}\cap H$. We also fix an integer $c > 1$ coprime to $6p$ and to the levels of $\cU^{(p)}$ and $\Phi^{(p)}$. Let $\cU = \cU^{(p)}\cU_p$, which -- up to shrinking $\cU^{(p)}$ -- we may assume is neat.
\end{notation}

\subsection{Definition of the classes}\label{sec:define classes}

\subsubsection{Branching laws and Eisenstein classes}\label{sec:construction branching}
Recall $H = \GL_2 \times \GL_1$. For numerology purposes, our pushforward maps will be from $\widetilde{Y}^H$ to $Y^{\GL_3}$, so we now transfer our Eisenstein classes from $Y^{\GL_2}$ to $\widetilde{Y}^H$. If $\cU^H \subset H(\A\subf)$ is open compact, we have a natural composition
\[
\mathrm{pr}_{\GL_2} : \widetilde{Y}^H(\cU^H) \longrightarrow Y^H(\cU^H) \longrightarrow Y^{\GL_2}(\cU^{H}\cap \GL_2),
\]
where the first map is from \eqref{eq:tilde maps} and the second map is induced by projection $H \to \GL_2$.
Pullback gives a map
\[
\mathrm{pr}_{\GL_2}^* : \h^1\left(Y^{\GL_2}(\cU^{\GL_2}), \LSk\right) \longrightarrow \h^1\left(\widetilde{Y}^H(\cU^H), \sV^H_{(0,-j;0)}\right).
\]
For appropriate $\cU^H$ we freely identify ${}_c\mathrm{Eis}^j_\Phif$ with its image under this map.

Let $j \in \Z$ with $0 \le j \le a$. Recall the characters $\nu_1,\nu_2$ on $H$ from \cref{def:nu}. We have an isomorphism
\[
V_{(0,-j;0)}^H \otimes \|\nu_1^j\| \isorightarrow V_{(j,0;-j)}^H
\]
of $H$-representations. Passing this twist outside the cohomology inverts it, so for appropriate $\cU^H$ we may thus consider
\begin{align*}
\mathrm{tw}_j({}_c\mathrm{Eis}_\Phif^j) \defeq {}_c\mathrm{Eis}_\Phif^j \otimes \| \nu_1^{-j}\| \in &\h^1\left(\widetilde{Y}^H(\cU^H),\sV^H_{(0,-j;0)}\right)\left[\|\nu_1^{-j}\|\right]\\
& \cong \h^1\left(\widetilde{Y}^H(\cU^H),\sV^H_{(j,0;-j)}\right).
\end{align*}

Recall the morphisms $\operatorname{br}^{[a,j]} :  V^H_{(j, 0; -j)}\to \iota^*(V_\lambda)$ defined in \cref{sec:branching laws}. We may (and do) suppose that these are integrally defined, and pushing forward we obtain an $H(\A\subf)$-equivariant map
\[
\mathrm{br}^{[a,j]}_\star : \h^1\big(\widetilde{Y}^H(\cU^H), \sV_{(j,0;-j)}^H(\Zp)\big) \to \h^1\left(\widetilde{Y}^H(\cU^{H}), \iota^*\sV_{\lambda}(\Zp)\right).
\]

\subsubsection{Towers of level groups} \label{sec:level tower} We now pushforward under $\iota$.  Let $r = r(\Pi,\alpha_p)$ from \cref{prop:P_1 conductor}.

\begin{definition}\label{def:U_n}
For $n \ge 1$, let $\cU_n$ denote the group
\[
\cU^{(p)} \times \left\{ g \in \GL_3(\Zp) : g = 1 \bmod
\smallthreemat{\star}{p^n}{p^{n}}{\star}{\star}{\star} {p^{r}}{p^r}{p^r}\right\}.
\]
Write $u$ for the element $\smallthreemat{1}{0}{1}{}{1}{0}{}{}{1}\smallthreemat{1}{}{}{}{}{-1}{}{1}{} \in \GL_3(\Zp)$.
\end{definition}

We shall let $t \geq \mathrm{max}(n,r)$ and consider the morphism
\[
\iota_{n, t}: \widetilde{Y}^H(\cU_{1}^H(p^t) \cap u \cU_n u^{-1}) \xrightarrow{\ \iota\ } Y^{\GL_3}(u \cU_n u^{-1}) \xrightarrow{\ \ u\ \ } Y^{\GL_3}(\cU_n),
\]
where
\[
\cU_{1}^H(p^t) = \left(\cU^{(p)} \cap H(\A\subf^{(p)})\right) \times \{ (\stbt \star\star 0 1, \star) \bmod p^t\}.
\]

\begin{proposition}\label{prop:intersection level}
If $t \ge \mathrm{max}(n,r)$, then $\cU_{1}^H(p^t) \cap u \cU_n u^{-1}$ consists of all $(\stbt a b c d, z) \in \cU_{1}^H(p^t)$ with $b = 0\bmod p^{n}$, $a = z  \bmod p^n$. Its index in $\cU_1^H(p^t)$ is $p^{2n-1}(p-1)$.
\end{proposition}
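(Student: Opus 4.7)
The plan is to reduce the question to an explicit matrix computation modulo $p^n$ and $p^r$. First I would compute $u^{-1}\iota(h)u$ for a general element $h = (\stbt{a}{b}{c}{d}, z) \in H(\Zp)$. Using the definition of $u$ from \cref{def:U_n} (for which $u^{-1} = \smallthreemat{1}{}{-1}{}{}{1}{}{-1}{}$), a direct matrix multiplication gives
\[
u^{-1}\,\iota(h)\,u \;=\; \begin{pmatrix} a & a-z & -b \\ 0 & z & 0 \\ -c & -c & d \end{pmatrix}.
\]

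Next, I would read off the congruence conditions that this matrix must satisfy to lie in $\cU_n$, comparing with the matrix $\smallthreemat{\star}{p^n}{p^n}{\star}{\star}{\star}{p^r}{p^r}{p^r}$ appearing in \cref{def:U_n}. The first row contributes $a \equiv z$ and $b \equiv 0 \pmod{p^n}$ (from the $(1,2)$ and $(1,3)$ entries), while the third row contributes $c \equiv 0$ and $d \equiv 1 \pmod{p^r}$. The hypothesis $t \ge r$ makes the latter two automatic from membership in $\cU_1^H(p^t)$, which already imposes $c \in p^t\Zp$ and $d \in 1 + p^t\Zp$; and since $u$ is trivial away from $p$, no further conditions arise at primes $\ell \ne p$. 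This yields the stated description of the intersection.

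For the index, I would reduce mod $p^n$, using $t \ge n$ so that the $\cU_1^H(p^t)$-conditions on $c,d$ become $c \equiv 0$ and $d \equiv 1 \pmod{p^n}$. The $p$-component of $\cU_1^H(p^t)$ then maps surjectively onto the set of triples $(\stbt{a}{b}{0}{1}, z)$ with $a, z \in (\Zp/p^n)^\times$ and $b \in \Zp/p^n$, a set of cardinality $p^{3n-2}(p-1)^2$; the subgroup reduces onto the diagonally-embedded torus $\{(\stbt{z}{0}{0}{1}, z) : z \in (\Zp/p^n)^\times\}$ of cardinality $p^{n-1}(p-1)$. Since both groups are the full preimages of their reductions mod $p^n$, the index equals the ratio, namely $p^{2n-1}(p-1)$. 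The only step that demands genuine care is getting the conjugation right; after that the rest is bookkeeping, and I do not anticipate any serious obstacle.
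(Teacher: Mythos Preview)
Your proof is correct and follows essentially the same strategy as the paper's: a direct matrix computation of the conjugate, followed by reading off congruence conditions and a straightforward index count. The only cosmetic difference is the direction of conjugation—you compute $u^{-1}\iota(h)u$ and check membership in $\cU_n$, whereas the paper starts from $g\in\cU_n$, computes $ugu^{-1}$, and checks when it lies in $\iota(\cU_1^H(p^t))$; these are equivalent. Your index argument via the surjection onto $\{(\stbt{\bar a}{\bar b}{0}{1},\bar z)\}\subset H(\Zp/p^n)$ is a clean way to package what the paper states in one line (``the restrictions on $b$ and $z$ introduce $p^n$ and $p^{n-1}(p-1)$ to the index'').
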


\begin{proof}
Let $g = \smallthreemat{A}{B}{C}{D}{E}{F}{G}{H}{I} \in \cU_n$. Then suppose there exists $\left(\smallmatrd{a}{b}{c}{d}, z\right)\in \cU_{1}^H(p^t)$ with
\[
\smallthreemat{a}{b}{0}{c}{d}{0}{0}{0}{z} = \iota\Big(\smallmatrd{a}{b}{c}{d}, z\Big) = ugu^{-1} = \smallthreemat{A+D}{-(C+F)}{(B+E) - (A+D)}{-G}{I}{G-H}{D}{-F}{-D+E}.
\]
Then $D=F=0$, so $b = -(C+F) = -C \equiv 0 \newmod{p^n}$, and $z = -D+E = E$. Moreover $(B+E)-(A+D) = 0$ implies $a= A+D = A \equiv E = z \newmod{p^n}$, as required. Since $t \geq r$, the congruence conditions $\bmod p^r$ satisfied by $G,H$ and $I$ in the definition of $\cU_n$ impose no further restrictions on $c \equiv 0 \newmod{p^t}$ and $d \equiv 1 \newmod{p^t}$. The restrictions on $b$ and $z$ introduce $p^n$ and $p^{n-1}(p-1)$ to the index respectively.
\end{proof}

\subsubsection{Construction at fixed level $n$}
Recall the map $\nu = (\nu_1,\nu_2): H \to \GL_1 \times \GL_1$ given by $(\gamma, z) \mapsto (\tfrac{\det \gamma}{z}, z)$. By \cref{prop:intersection level}, we have $\nu_1(h_p) = 1\newmod{p^n}$ for all $h \in \cU_{1}^H(p^t) \cap u \cU_n u^{-1}$. Thus $\nu_1$ induces a locally constant map
\begin{equation}\label{eq:nu_n}
\nu_{1, (n)}: \widetilde{Y}^H\left(\cU_{1}^H(p^t) \cap u \cU_n u^{-1}\right) \twoheadrightarrow \Delta_n \defeq (\Z/p^n)^\times.
\end{equation}

\begin{remark}\label{rem:dirichlet convention}
Here we have identified $\Delta_n$ with a quotient of $\Q^\times \backslash \A^\times$, in such a way that a uniformiser at a prime $\ell \ne p$ in $\A\subf^\times$ is mapped to $\ell \bmod p^n$. This is consistent with our convention for Dirichlet characters elsewhere in the text (but its restriction to $\Zp^\times \subset \A^\times$ is the inverse of the obvious reduction map).
\end{remark}
Composing with the canonical map $\Delta_n \to \Zp[\Delta_n]^\times$, $\delta \mapsto [\delta]$, we obtain a locally constant function $[\nu_{1, (n)}]$ on $\widetilde{Y}^H\left(\cU_{1}^H(p^t) \cap u \cU_n u^{-1}\right)$ with values in the group ring $\Zp[\Delta_n]$, i.e.~an element of $\h^0\left(\widetilde{Y}^H\left(\cU_{1}^H(p^t) \cap u \cU_n u^{-1}\right), \Zp\right) \otimes_{\Zp} \Zp[\Delta_n]$.

\begin{notation}\label{not:eta_2}  For technical reasons (see \cref{prop:get rid of c}), we introduce a second auxiliary character. Let $\eta_2$ be an even Dirichlet character of prime-to-$p$ conductor, which (for notational convenience) we suppose to be $\Zp$-valued. Shrinking $\cU^{(p)}$ if necessary, we may suppose $\cU_{1}^H(p^t) \cap u \cU_n u^{-1} \subseteq \ker(\widehat{\eta}_{2})$, and thus regard $\widehat{\eta}_{2} \circ \nu_2$ as a class in $\h^0\left(\widetilde{Y}^H\left(\cU_{1}^H(p^t) \cap u \cU_n u^{-1}\right), \Zp\right)$.
\end{notation}

\begin{definition}\label{def:zaj}
For any $t \geq \mathrm{max}(n,r)$, we define
\begin{align*}
	{}_cz_{n}^{[a,j]}(\cU^{(p)}, \Phi^{(p)}) &\defeq p^{an} \left[ \big(\iota_{n,t, \star} \circ \operatorname{br}^{[a,j]}\circ \operatorname{tw}_i\big) \otimes 1\right]\left({}_c \Eis^{j}_{\Phift} \cup [\nu_{1, (n)}] \cup (\widehat{\eta_2} \circ \nu_2)\right)\\
	& \in   \h^3\left(Y^{\GL_3}(\cU_n), \sV_\lambda(\Zp)\right) \otimes_{\Zp} \Zp[\Delta_n].
\end{align*}
Let $z_{n}^{[a,j]}(-) \in \h^3\left(Y^{\GL_3}(\cU_n), \sV_\lambda(\Qp)\right) \otimes_{\Zp} \Zp[\Delta_n]$ be the analogue defined instead using $\Eis_{\Phi\subf^t}^j$ (without the $c$ factor).
\end{definition}

These classes are independent of $t$ for $t \ge \mathrm{max}(n,r)$, since the maps $\iota_{n, t}$ for varying $t$ are compatible with the natural trace maps. The normalisation term $p^{-an}$ will later ensure compatibility with the normalised Hecke operator $U_{p,1}$ (see \eqref{eq:tau}).

\begin{remark}\label{rem:connected components}
Philosophically,  this process formally `spreads out' the cohomology over (unions of) connected components.  More precisely, let $D = \mathrm{cond}(\eta_2)$. Then we have a map
\[
\nu_{1,(n)} \times \nu_{2,D} : \widetilde{Y}^H\left(\cU_{1}^H(p^t) \cap u \cU_n u^{-1}\right) \twoheadrightarrow \Delta_n \times (\Z/D)^\times,
\]
and
\[
\widetilde{Y}^H\left(\cU_{1}^H(p^t) \cap u \cU_n u^{-1}\right) = \bigsqcup_{(x,y) \in \Delta_n \times (\Z/D)^\times} \widetilde{Y}^H_{x,y},
\]
where $\widetilde{Y}^H_{x,y} \defeq [\nu_{1,(n)} \times \nu_{2,D}]^{-1}(x,y)$ is a union of connected components. Let $i_{x,y} : \widetilde{Y}^H_{x,y} \hookrightarrow \widetilde{Y}^H\left(\cU_{1}^H(p^t) \cap u \cU_n u^{-1}\right)$ be the inclusion. Then
\[
{}_c \mathrm{Eis}_{\Phift}^{j} \cup [\nu_{1,(n)}] \cup (\widehat{\eta_2}\circ\nu_2)= \sum_{x \in\Delta_n} \left[\bigg(\sum_{y\in(\Z/D)^\times} \eta_2(y) \cdot i_{x,y}^*\left({}_c \mathrm{Eis}_{\Phift}^{j}\right)\bigg) \otimes [x]\right].
\]
\end{remark}

Our chief interest is in a modified version of these classes.

\begin{definition}\label{def:V_n}
Let $\tau \defeq \tau_1 \in \GL_3(\Zp)$ from \eqref{eq:tau}, and let
\[
\cV_n \defeq \tau^{-n} \cU_n \tau^n =  \cU^{(p)} \times \left\{ g \in \GL_3(\Zp) : g = 1 \bmod
\smallthreemat {\star}{\star}{\star} {p^n}{\star}{\star} {p^{n+r}}{p^r}{p^r}\right\}.
\]
\end{definition}
Translation by $\tau^n$ gives a map $Y^G(\cU_n) \to Y^G(\cV_n)$. Crucially, $\cV_n \subset \cU_p$ from \eqref{eq:U_p measure}. We use this to define a pushforward map $(\tau^n)_\star$ on cohomology with coefficients in $\sV_\lambda$, scaling by $p^a$ (as in our definition of Hecke operators) in order to obtain a map on the integral lattices $\sV_\lambda(\Zp)$.

\begin{definition}\label{def:xi aj}
We set
\[
{}_c\xi^{[a,j]}_n(-) = \left[\tau^n_\star \otimes 1\right] \left({}_c z_{n}^{[a,j]}(-)\right) \in   \h^3(Y^{\GL_3}(\cV_n), \sV_\lambda(\Zp)) \otimes_{\Zp} \Zp[\Delta_n].
\]
Similarly, define $\xi^{[a,j]}_n(-)$ (with $\Qp$ coefficients) to be the analogue using $z_{n}^{[a,j]}(-)$.
\end{definition}

The dependence on $c \in \Z$ is given by
\begin{equation}
\label{eq:c-dependence}
{}_c\xi_{n}^{[a,j]}(\cU^{(p)}, \Phi^{(p)}) = \left(c^2 - c^{-j} \eta_2(c)^{-1}  \langle c \rangle \otimes [c]^{-1}\right) \cdot \xi_{n}^{[a,j]}\left(\cU^{(p)}, \Phi^{(p)}\right),
\end{equation}
where $[c]$ is the class of $c$ in $\Delta_n = (\Z/p^n)^\times$, and $\langle c \rangle$ denotes pullback by $\operatorname{diag}(c, c, c)^{-1} \in Z_G(\Zhat^{(c)})$.
\subsubsection{Summary}
The following diagram summarises the construction of ${}_c \xi_n^{[a,j]}$. For notational convenience,  we write $\cU^H_{n} \defeq \cU^H_1(p^n)\cap u\cU_nu^{-1}$.

\begin{equation}\label{eq:main diagram}
\xymatrix@C=1mm@R=5mm{
	{}_c\mathrm{Eis}_{\Phift}^{j} \in \ar@{|-->}[ddddddd] 	&&&&&&&  \hbox to 3em{\hss$\h^1\left(Y^{\GL_2}(\cU_{n}^H\cap \GL_2),  \sV_{(0,-i)}^{\GL_2}(\Zp)\right)$\hss}\ar[d]^{\mathrm{pr}_{\GL_2}^*}  &\\
	&&&&&&&  \hbox to 3em{\hss$\h^1\left(\widetilde{Y}^H(\cU^H_{n}),\sV^H_{(0,-j;0)}(\Zp)\right)$\hss}	 \ar@{=}[d]^{\mathrm{tw}_j}&\\
	&&&&&&&  \hbox to 3em{\hss$\h^1\left(\widetilde{Y}^H(\cU^H_{n}),\sV^H_{(j,0;-j)}(\Zp)\right)$\hss}	 \ar[d]^{\cup [\nu_{1, (n)}] \cup (\hat\eta_2 \circ \nu_2)}&\\
	&&&&&&& \hbox to 3em{\hss$\h^1\left(\widetilde{Y}^H(\cU^H_{n}),\sV^H_{(j,0;-j)}(\Zp)\right) \otimes \Zp[\Delta_n]$\hss}\ar[d]^-{\mathrm{br}^{[a,j]} \otimes 1}& \\
	&&&&&&& \hbox to 3em{\hss$\h^1\left(\widetilde{Y}^H(\cU^H_{n}),\sV_\lambda(\Zp)\right) \otimes \Zp[\Delta_n]$\hss} \ar[rdd]^-{\iota_{n,t,*} \otimes 1} &\\
	&&&&&&&&\\
	&&&&&&&&	\h^3\left(Y^{\GL_3}(\cU_n),\sV_\lambda(\Zp)\right) \otimes \Zp[\Delta_n]\ar[d]^-{\tau^n_\star \otimes 1} \\
	p^{-an} {}_c \xi_n^{[a,j]}  \ \ \in
	&&&&&&& & \h^3\left(Y^{\GL_3}(\cV_n),\sV_\lambda(\Zp)\right) \otimes \Zp[\Delta_n].
}
\end{equation}

\subsubsection{Varying $n$} If $\cU' \subset \cU \subset \GL_3(\A\subf)$, let $\mathrm{pr}_{\cU}^{\cU'}$ denote the natural projection map $Y^{\GL_3}(\cU') \to Y^{\GL_3}(\cU)$. We get associated pullback/pushforward maps on cohomology. Also let $\mathrm{norm}^{\Delta_{n+1}}_{\Delta_n}$ denote the natural projection map $\Zp[\Delta_{n+1}] \to \Zp[\Delta_n]$.

\begin{theorem}[Norm relation]\label{thm:norm relation}
For any $n \ge 1$ we have
\[
\left[\left(\mathrm{pr}^{\cV_{n+1}}_{\cV_n}\right)_\star \otimes \operatorname{norm}_{\Delta_n}^{\Delta_{n+1}}\right]\left(\xi_{n+1}^{[a,j]}(-)\right) = \left[U_{p,1}' \otimes 1\right] \cdot \xi_{n}^{[a,j]}(-)
\]
as elements of $\h^3(Y^{\GL_3}(\cV_n), \sV_\lambda(\Zp)) \otimes_{\Zp} \Zp[\Delta_n].$
\end{theorem}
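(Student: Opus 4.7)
My plan is to reduce \cref{thm:norm relation} to a coset identity on $\GL_3(\Qp)$ linking the natural trace map $(\mathrm{pr}^{\cV_{n+1}}_{\cV_n})_*$ to the Hecke operator $U_{p,1}'$, combined with a distribution relation for the character $[\nu_{1,(n)}]$. Since the classes $z_n^{[a,j]}$ are independent of $t$ for $t \ge \max(n,r)$, I would first fix $t \ge n+1+r$ so that both $\xi_n^{[a,j]}$ and $\xi_{n+1}^{[a,j]}$ can be realised using the \emph{same} Eisenstein class $\Eis^j_{\Phift}$, pushed forward via $\iota_{n,t,*}$ and $\iota_{n+1,t,*}$ from the nested levels $\cU^H_{n+1} \subset \cU^H_n$ respectively.

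The key geometric input is the matrix identity $\tau \cV_{n+1}\tau^{-1} = \cV_n \cap \tau \cV_n \tau^{-1}$, verified by a direct calculation from \cref{def:U_n,def:V_n}. This identifies $Y^{\GL_3}(\cV_{n+1})$, after right-translation by $\tau^{-1}$, with the intermediate space $Y^{\GL_3}(\cV_n \cap \tau \cV_n \tau^{-1})$ appearing in the definition of $U_{p,1}'$. Under this identification, $\xi_{n+1}^{[a,j]} = (\tau^{n+1})_* z_{n+1}^{[a,j]}$ corresponds to $(\tau^n)_* z_{n+1}^{[a,j]}$ at the intermediate level, placing the two sides of the asserted norm relation on the same geometric footing inside $H^3(Y^{\GL_3}(\cV_n), \sV_\lambda(\Zp)) \otimes \Zp[\Delta_n]$.

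The heart of the argument is then a coset count. On one hand, the LHS decomposes as a sum over cosets of $\cU^H_{n+1}$ in $\cU^H_n$, whose cardinality is $p^2$ by a calculation parallel to \cref{prop:intersection level} (the ratio of the indices $p^{2(n+1)-1}(p-1)/p^{2n-1}(p-1)$ inside $\cU_1^H(p^t)$). On the other hand, $U_{p,1}'$ is governed by the double coset $\cV_n \tau \cV_n$, and an explicit reduction using the computation in the previous paragraph gives $[\cV_n : \cV_n \cap \tau^{-1}\cV_n \tau] = p^2$ as well, by counting residues of the $(1,2)$ and $(1,3)$ entries modulo $p$. The combinatorial matching between these two $p^2$-element sets is furnished by the openness and density of $\overline{P}_1(\Zp) \cdot u \cdot H(\Zp)$ in $\GL_3(\Zp)$ noted in \cref{sec:p-adic interpolation}: the sphericity of $\GL_3/H$ along the $P_1$-direction ensures that the $H$-orbit of $u$ meets each relevant $P_1$-coset transversally. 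The factor $p^a$ in $U_{p,1}'$ then cancels precisely against the shift from $p^{an}$ to $p^{a(n+1)}$ in \cref{def:zaj}, while the distribution relation $\sum_{\gamma \in \cU^H_n / \cU^H_{n+1}} \gamma^* [\nu_{1,(n+1)}] = \operatorname{norm}_{\Delta_n}^{\Delta_{n+1}}[\nu_{1,(n)}]$ is immediate from the description of $\nu_{1,(n)}$ as the mod-$p^n$ reduction of $\nu_1$; and the factor $\widehat\eta_2 \circ \nu_2$ passes through unaffected, since $\nu_2$ is insensitive to the level change at $p$.

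The main obstacle is making the coset-matching in the previous paragraph fully rigorous and $H(\Af)$-equivariantly compatible with the pushforward $\iota_{n,t,*}$. Without the $u$-twist in \cref{def:U_n}, the image $\iota(H) \cap P_1$ would contain the entire Levi $\GL_2 \times \GL_1$, and the intersection would be degenerate; conjugating by $u$ puts $\iota(H)$ into ``general position'' with respect to $P_1$. In practice, the verification is a concrete matrix check: one picks explicit coset representatives of $\cU^H_n/\cU^H_{n+1}$, applies $u\tau^n$, and checks that the resulting elements differ from a set of coset representatives of $\cV_n \tau \cV_n/\cV_n$ by elements of $\cV_n \cap \iota(H(\Zp))$ -- which is the combinatorial engine of this ``spherical'' norm relation, and is the same mechanism as in the analogous norm relations in \cite{spherical_II}.
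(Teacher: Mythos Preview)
Your proposal is correct in substance and identifies all the essential ingredients: the independence of $t$, the identity $\cU_{n+1} = \cU_n \cap \tau\cU_n\tau^{-1}$ (equivalently your $\tau\cV_{n+1}\tau^{-1} = \cV_n \cap \tau\cV_n\tau^{-1}$), the matching of the two indices of order $p^2$, the cancellation of the normalising factor $p^a$ against the shift $p^{a(n+1)}/p^{an}$, and the fact that $\operatorname{norm}^{\Delta_{n+1}}_{\Delta_n}[\nu_{1,(n+1)}]$ is the pullback of $[\nu_{1,(n)}]$. You are also right that the $u$-twist is what puts $\iota(H)$ into general position relative to $P_1$, and that this is the combinatorial core.

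The paper, however, packages the ``coset-matching'' step more cleanly. Rather than choosing explicit representatives of $\cU^H_n/\cU^H_{n+1}$ and of $\cV_n\tau\cV_n/\cV_n$ and checking by hand that they line up modulo $\cV_n \cap \iota(H(\Zp))$, the paper works at the $\cU_n$-level and observes that the square
\[
\begin{array}{ccc}
\widetilde{Y}^H(\cU_1^H(p^t)\cap u\cU_{n+1}u^{-1}) & \xrightarrow{\ \iota_{n+1,t}\ } & Y^{\GL_3}(\cU_{n+1})\\
\downarrow & & \downarrow\\
\widetilde{Y}^H(\cU_1^H(p^t)\cap u\cU_{n}u^{-1}) & \xrightarrow{\ \iota_{n,t}\ } & Y^{\GL_3}(\cU_{n})
\end{array}
\]
is \emph{Cartesian}: the horizontal maps are injective (this is exactly what the $u$-twist buys), and both vertical maps have degree $p^2$ by \cref{prop:intersection level}. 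One then invokes proper base change: since ${}_c\Eis^j_{\Phift}\cup[\nu_{1,(n)}]$ at the top-left is the pullback from the bottom-left, its pushforward along the top and then down the right equals its pushforward along the bottom and then pullback up the right. This yields the intermediate relation $[1\otimes\operatorname{norm}](z_{n+1}) = p^a[(\operatorname{pr}_{\cU_n}^{\cU_{n+1}})^*\otimes 1](z_n)$ directly, and the passage to $\xi_n$ and $U_{p,1}'$ is then pure formal manipulation with $\tau_*$.

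So your explicit matching would work, but the Cartesian-diagram formulation replaces it with a single functorial step and makes the role of injectivity transparent. The two arguments are equivalent in content; the paper's is shorter to write down and generalises more readily (indeed it is an instance of \cite[Prop.~4.5.2]{loeffler-parabolics}).
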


\begin{proof}
This is an instance of \cite[Prop.\ 4.5.2]{loeffler-parabolics}, elaborated in \S4.6 `\emph{The Betti setting}' and \S5.2.3 \emph{op.\ cit}. It is an analogue of \cite[Thm.\ 3.13]{LW18}. As this result is key to our construction, for the convenience of the reader we sketch the proof in this special case, translating the notation of \cite{loeffler-parabolics} into our setting. We shall drop the indices $[a,j]$ and $c$ here for brevity.

We first prove an analogue for the elements $z_n$ (as in \cite[Prop.\ 4.5.1]{loeffler-parabolics}). We claim that
\begin{equation}\label{eq:z_n norm relation}
	\left[ 1 \otimes \operatorname{norm}_{\Delta_n}^{\Delta_{n+1}}\right](z_{n+1}) = p^{a}\left[\left(\operatorname{pr}_{\cU_n}^{\cU_{n+1}}\right)^* \otimes 1\right](z_n)
\end{equation}
as elements of $\h^3(Y^{\GL_3}(\cU_{n+1}), \sV_\lambda(\Zp)) \otimes_{\Zp} \Zp[\Delta_n]$.	To prove this, one fixes $t \geq n+1$ and checks that the horizontal maps in the diagram
\begin{equation}\label{eq:cartesian}
	\xymatrix@C=20mm{
		\widetilde{Y}^H(\cU_{1}^H(p^t) \cap u \cU_{n+1} u^{-1})\ar[d] \ar[r]^-{\iota_{n+1,t}} & Y^G(\cU_{n+1})\ar[d] \\
		\widetilde{Y}^H(\cU_{1}^H(p^t) \cap u \cU_n u^{-1}) \ar[r]^-{\iota_{n,t}} & Y^G(\cU_n)
	}
\end{equation}
are injective. (Note: this injectivity is the crucial reason for introducing the twisting map $u$; cf.\ \cite[Rem.\ 4.12]{LW18}). By injectivity, the diagram is Cartesian as both vertical maps have degree $p^2$ (using \cref{prop:intersection level}). Now, from the definitions we can write
\begin{equation}\label{eq:cartesian 2}
	\left[1 \otimes \operatorname{norm}_{\Delta_n}^{\Delta_{n+1}}\right](z_{n+1}) = p^{a(n+1)}\big[\big(\iota_{n+1, t,\star} \circ \operatorname{br}\circ \operatorname{tw}\big) \otimes 1\big] \left({}_c \Eis^{j}_{\Phift} \cup [\nu_{(n)}]\right).
\end{equation}
The class ${}_c \Eis_{\Phift}^j \cup [\nu_{(n)}]$ (at the top left corner) is a pullback from level $\cU_{1}^H(p^t) \cap u \cU_n u^{-1}$ (that is, the bottom left corner). Thus the right-hand side of \eqref{eq:cartesian 2} is obtained by passing from the bottom-left to top right of \eqref{eq:cartesian} along the left and top arrows, and \eqref{eq:z_n norm relation} (where the right-hand side is obtained along the bottom and right arrows) follows from the compatibility of pushforward and pullback in Cartesian diagrams.

Now recall that on cohomology at level $\cU_n$, we have	$U_{p,1}' = p^{a} \big(\mathrm{pr}^{\tau^{-1} \cU_n \tau \cap \cU_n}_{\cU_n}\big)_\star\circ \tau_\star \circ \big(\mathrm{pr}_{\cU_n}^{\cU_n \cap \tau \cU_n\tau^{-1}} \big)^*.$
One easily sees $\cU_{n+1} = \cU_n \cap \tau \cU_n\tau^{-1}$. Applying the map $\left[\left((\mathrm{pr}^{\tau^{-1} \cU_n \tau \cap \cU_n}_{\cU_n}\big)_\star\circ \tau_\star \right)\otimes 1]\right]$ to \eqref{eq:z_n norm relation}, we see
\begin{equation}\label{eq:z_n norm relation 2}
	\left[(\mathrm{pr}^{\tau^{-1} \cU_n \tau \cap \cU_n}_{\cU_n}\big)_\star\circ \tau_\star \otimes  \operatorname{norm}_{\Delta_n}^{\Delta_{n+1}}\right] (z_{n+1}) = \left[U_{p,1}' \otimes 1\right] \cdot z_n
\end{equation}
as elements of $\h^3(Y^{\GL_3}(\cU_{n}), \sV_\lambda(\Zp)) \otimes_{\Zp} \Zp[\Delta_n].$ One checks that
\begin{align*}
	(\tau^n)_\star \circ 	\left(\mathrm{pr}^{\tau^{-1} \cU_n \tau \cap \cU_n}_{\cU_n}\right)_\star \circ \tau_\star &= 	\left(\mathrm{pr}^{\tau^{-n-1} \cU_n \tau^{n+1} \cap \tau^{-n}\cU_n\tau^n }_{\tau^{-n}\cU_n\tau^n}\right)_\star \circ (\tau^{n+1})_\star\\
	&= \left(\mathrm{pr}_{\cV_n}^{\cV_{n+1}}\right)_\star \circ (\tau^{n+1})_\star,
\end{align*}
whilst a simple check on single coset representatives shows that $(\tau^n)_\star \circ U_{p,1}' = U_{p,1}' \circ (\tau^n)_\star$. Since $\xi_n = (\tau^n)_\star(z_n)$ by definition, the result follows by applying $[(\tau^n)_\star \otimes 1]$ to \eqref{eq:z_n norm relation 2}.
\end{proof}

\subsection{Pairing with a \texorpdfstring{$\GL_3$}{GL3}-eigenclass}\label{sec:pairing with cusp form}


Recall that in Notation \ref{not:construction}, we fixed a level subgroup $\cU^{(p)} \subset \GL_3(\A\subf)$ fixing a Schwartz function $\Phi^{(p)} \in \cS(\A\subf^{(p),2},\Z)$. Let $L/\Qp$ be a finite extension with ring of integers $\cO_L$, and
\[
\phi \in \hc{2}\left(Y^{\GL_3}(\cU),\sV_\lambda^\vee(\cO_L)\right)
\]
be any cohomology class. Let $\cU_n$ be as in \cref{def:U_n}, $\cV_n \defeq \tau^{-n}\cU_n\tau^n$, and let $\phi_n$ be the image of $\phi$ in the cohomology of $Y^{\GL_3}(\cV_n)$, recalling that $\cV_n\subset\cU$. Recall the pairing $\langle-,-\rangle_{\cV_n}$ from \cref{sec:cup product pairing}. Let $\mathrm{d}h\subf$ denote the unramified Haar measure on $H(\A\subf)$, which defines a volume $\mathrm{vol}(\cU^{(p)}\cap H)$.

\begin{proposition}\label{cor:ind of U}
The element of $\cO_L[\Delta_n]$ defined by
\begin{equation}\label{eq:measure finite level}
	{}_c\Xi_n^{[a,j]}(\phi, \Phi^{(p)}) \defeq \operatorname{vol}(\cU^{(p)} \cap H) \cdot \left\langle \phi_n,\  {}_c \xi_{n}^{[a,j]}\left( \cU^{(p)}, \Phi^{(p)}\right) \right\rangle_{\cV_n}
\end{equation}
is independent of $\cU^{(p)}$ (for suitably chosen $c$).
\end{proposition}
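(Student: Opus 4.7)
The strategy is to show that for any $\cU^{\prime(p)} \subseteq \cU^{(p)}$ both satisfying the hypotheses (stabilising $\varphi\subf$ and $\Phi^{(p)}$ respectively), the elements computed from \eqref{eq:measure finite level} using the two choices coincide. The general case of comparing two arbitrary tame levels $\cU^{(p)}_1, \cU^{(p)}_2$ follows by passing to a common refinement, e.g.\ their intersection. The clause ``for suitably chosen $c$'' reflects the need to pick a single smoothing integer $c$ coprime to the conductors of both levels, so that the integral Eisenstein constructions work simultaneously at both.

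With such a pair, write $\cV_n'$ and $(\cU^H_n)'$ for the corresponding smaller level groups, with natural projection maps $\mathrm{pr}^G: Y^{\GL_3}(\cV_n') \to Y^{\GL_3}(\cV_n)$ and $\mathrm{pr}^H: \widetilde{Y}^H((\cU^H_n)') \to \widetilde{Y}^H(\cU^H_n)$. Since $\varphi\subf$ is stable under $\cU^{(p)}$, its projection $(\phi_{\varphi\subf})_n$ to level $\cV_n'$ is $\mathrm{pr}^{G,*}$ of the projection to level $\cV_n$. On the other hand, since $\Phi^{(p)}$ is $\cU^{(p)} \cap H$-invariant and the Betti--Eisenstein symbol is compatible with change of level (\cref{cor:betti-eisenstein}), together with the analogous compatibilities for the twist $\mathrm{tw}_j$, the branching map $\mathrm{br}^{[a,j]}$, and the cup products with $[\nu_{1,(n)}]$ and $\widehat{\eta_2} \circ \nu_2$, the entire input to $\iota_{n,t,*}$ at the smaller level is $\mathrm{pr}^{H,*}$ of the corresponding input at the larger level.

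The remaining step compares the pushforwards under $\iota_{n,t}$ (and the translation by $\tau^n$, which commutes with level change). A direct computation using the adelic description of the symmetric spaces yields a base change identity
\[
\iota'_{n,t,*}\bigl(\mathrm{pr}^{H,*}\alpha\bigr) = c(\cU^{(p)}, \cU^{\prime(p)}) \cdot \mathrm{pr}^{G,*}\bigl(\iota_{n,t,*}\alpha\bigr),
\]
where the factor $c$ reflects the failure of the natural square to be Cartesian: the image of $\widetilde{Y}^H((\cU^H_n)')$ inside the fibre product $\widetilde{Y}^H(\cU^H_n) \times_{Y^{\GL_3}(\cU_n)} Y^{\GL_3}(\cU_n')$ is only a distinguished subset of the components, enumerated by the double-coset set $\iota(\cU^H_n) \backslash \cU_n / \cU_n'$. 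Combining the above with the standard adjointness $\langle \mathrm{pr}^{G,*} a, \mathrm{pr}^{G,*} b\rangle_{\cV_n'} = [\cV_n:\cV_n'] \cdot \langle a, b \rangle_{\cV_n}$ for Poincar\'e duality shows that the unnormalised pairing at level $\cV_n'$ equals $[\cU^{(p)} \cap H : \cU^{\prime(p)} \cap H]$ times the pairing at level $\cV_n$; but the volume $\mathrm{vol}(\cU^{(p)} \cap H)$ scales by exactly this index under refinement, so the full expression is level-independent.

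The main technical obstacle is verifying the base change factor $c(\cU^{(p)}, \cU^{\prime(p)})$. Since the diagram is not Cartesian, one factors $\iota'_{n,t,*} \circ \mathrm{pr}^{H,*}$ through the fibre product, applying proper base change on the Cartesian subsquare and the projection formula on the residual piece. The conceptually cleanest perspective is that $\mathrm{vol}(\cU^{(p)} \cap H) \cdot \langle \phi_{\varphi\subf}, {}_c\xi_n^{[a,j]} \rangle_{\cV_n}$ is a Riemann-sum approximation to a fixed adelic expression on $H(\Q)\backslash H(\A)$, the volume factor providing exactly the normalisation needed for stability under refinement, much as in the usual proof of level independence of automorphic period integrals.
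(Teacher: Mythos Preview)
Your overall strategy---reduce to a nested pair $\cU'^{(p)} \subset \cU^{(p)}$, compare the two pairings, and conclude via a common refinement---is exactly the paper's approach. The ``Riemann-sum'' intuition at the end is also the right heuristic. However, the intermediate step is overcomplicated, and the base-change identity you invoke does not hold as stated.

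You write $\iota'_{n,t,*}(\mathrm{pr}^{H,*}\alpha) = c(\cU^{(p)},\cU'^{(p)}) \cdot \mathrm{pr}^{G,*}(\iota_{n,t,*}\alpha)$ for a scalar $c$ determined by component multiplicities. But the square in question is not Cartesian, and the components of the fibre product are indexed by the double-coset set $\iota(\cU^H_n)\backslash \cU_n / \cU_n'$; the right-hand side $\mathrm{pr}^{G,*}\iota_*\alpha$ is a \emph{sum} over these cosets (each contributing a different translate of the Eisenstein input), while $\iota'_*\mathrm{pr}^{H,*}\alpha$ corresponds only to the identity coset. There is no reason for the non-identity cosets to contribute scalar multiples of the identity term, so no such scalar relation exists in general. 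Consequently your derivation of the index $[\cU^{(p)}\cap H : \cU'^{(p)}\cap H]$ from $c \cdot [\cV_n : \cV_n']$ is not justified (and indeed would force $c$ to be a non-integer).

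The paper avoids this entirely by working with pushforward rather than pullback on the $\GL_3$ side. The relevant identity is
\[
\big(\mathrm{pr}^{\cV_n'}_{\cV_n}\big)_* \, \xi_n^{[a,j]}(\cU'^{(p)},\Phi^{(p)}) = [\cU^{(p)}\cap H : \cU'^{(p)}\cap H] \cdot \xi_n^{[a,j]}(\cU^{(p)},\Phi^{(p)}),
\]
which follows immediately from functoriality: the commuting square gives $\mathrm{pr}^G_* \circ \iota'_* = \iota_* \circ \mathrm{pr}^H_*$, and then $\mathrm{pr}^H_* \circ \mathrm{pr}^{H,*}$ is multiplication by the degree $[\cU^{(p)}\cap H : \cU'^{(p)}\cap H]$. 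Pairing with $(\phi_{\varphi\subf})_n' = \mathrm{pr}^{G,*}(\phi_{\varphi\subf})_n$ and using the adjunction $\langle \mathrm{pr}^*a, b\rangle_{\cV_n'} = \langle a, \mathrm{pr}_* b\rangle_{\cV_n}$ (rather than the pullback-pullback relation you use) gives the index relation directly, with no component analysis required.
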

\begin{proof}
If $\cU' \subseteq \cU \subset \GL_3(\A\subf^{(p)})$ are two levels, with $\cU \cap H$ fixing $\Phi^{(p)}$, then one may check that
\begin{equation}\label{eq:change of U}
	\big(\operatorname{pr}^{\cU}_{\cU'}\big)_\ast\left({}_c\xi_{n}^{[a,j]}\big(\cU, \Phi^{(p)}\big)\right) = [\cU \cap H : \cU' \cap H] \cdot  {}_c\xi_{n}^{[a,j]}\left(\cU', \Phi^{(p)}\right).
\end{equation}
Let $\cU_1^{(p)}, \cU^{(p)}_2$ be arbitrary. Then their intersection $\cU_3^{(p)}$ also satisfies the conditions, and we see the result by applying \eqref{eq:change of U} to $\cU_3^{(p)} \subset \cU_1^{(p)}, \cU_2^{(p)}$.
\end{proof}

For finite-slope eigenclasses, we can also vary $n$.

\begin{corollary}
Suppose  $U_{p,1}\phi = \alpha\phi$, with $0 \neq \alpha \in \cO_L$.
	\begin{itemize}
		\item[(i)]
The element
\[
{}_c\Xi^{[a,j]}(\phi, \Phi^{(p)}) \defeq \left(\alpha^{-n} \cdot {}_c\Xi_n^{[a,j]}(\phi, \Phi^{(p)})\right)_{n \ge 1} \in L[\![ \Zp^\times]\!]
\]
is a well-defined distribution on $\Zp^\times$ (a linear functional on locally-constant functions). 

\item[(ii)]If $v_p(\alpha)=0$, then ${}_c\Xi^{[a,j]}(\phi, \Phi^{(p)}) \in \cO_L[\![\Zp^\times]\!]$ is a measure.
\end{itemize}
\end{corollary}
\begin{proof}
By the above remarks, for (i) it suffices to show that
\begin{equation}\label{eq:measure relation}
	\mathrm{norm}^{\Delta_{n+1}}_{\Delta_n}\left( \alpha^{-(n+1)}\cdot 	{}_c\Xi_{n+1}^{[a,j]}\big(\phi, \Phi^{(p)}\big)\right) = \alpha^{-n} \cdot 	{}_c\Xi_n^{[a,j]}\left(\phi, \Phi^{(p)}\right).
\end{equation}
By definition, we have $\phi_{n+1} = (\mathrm{pr}_{\cV_n}^{\cV_{n+1}})^*(\phi_n).$ Since $c, [a,j], \Phi^{(p)}$ and $\cU^{(p)}$ are fixed, we drop them from notation for clarity, and see
\begin{align}
	\mathrm{norm}^{\Delta_{n+1}}_{\Delta_n}&\left(\alpha^{-(n+1)} \Big\langle \phi_n, \xi_{n+1} \Big\rangle_{\cV_{n+1}} \right) \notag\\
	& = \alpha^{-(n+1)}\left\langle (\mathrm{pr}_{\cV_n}^{\cV_{n+1}})^*\phi_n, \left[1 \otimes \mathrm{norm}^{\Delta_{n+1}}_{\Delta_n}\right]\xi_{n+1} \right\rangle_{\cV_{n+1}} \notag\\
	& = \alpha^{-(n+1)}\left\langle \phi_n, \left[ (\mathrm{pr}_{\cV_n}^{\cV_{n+1}})_* \otimes \mathrm{norm}^{\Delta_{n+1}}_{\Delta_n}\right]\xi_{n+1} \right\rangle_{\cV_n} \notag\\
	&= \alpha^{-(n+1)}\left\langle \phi_n,\  U_{p,1}'\cdot \xi_{n} \right\rangle_{\cV_n}\label{eq:norm relation apply}\\
	&=\alpha^{-(n+1)}\left\langle U_{p,1}\cdot\phi_n, \  \xi_{n} \right\rangle_{\cV_n} = \alpha^{-n}\big\langle \phi_n, \xi_{n} \big\rangle_{\cV_n}. \notag
\end{align}
In \eqref{eq:norm relation apply} we use \cref{thm:norm relation}, in the penultimate step use that $U_{p,1}$ and $U_{p,1}'$ are adjoint under $\langle-,-\rangle$, and the final equality uses $U_{p,1}\phi= \alpha\phi$. We conclude (i) after rescaling by $\operatorname{vol}(\cU^{(p)} \cap H)$.

Part (ii) is immediate as $\alpha^{-n}\cdot {}_c\Xi_n^{[a,j]}(\phi,\Phi^{(p)}) \in \cO_L[\Delta_n]$ for all $n$. 
\end{proof}

\subsection{Getting rid of \texorpdfstring{$c$}{c}} \label{sec:getting rid of c}

We fix $\Phi^{(p)}$ and $\cU^{(p)}$ and, for now, drop them from notation. We also fix $\phi$ as above, with $U_{p,1}\phi = \alpha\phi$, with $v_p(\alpha) = 0$.

Define `non-smoothed' analogues $\Xi_n^{[a,j]}$ and $\Xi^{[a,j]}$ of ${}_c\Xi_n^{[a,j]}$ and ${}_c\Xi^{[a,j]}$ by using $\xi_n^{[a,j]}$ instead of ${}_c \xi_n^{[a,j]}$. Since the non-smoothed Eisenstein classes are $\Qp$- rather than $\Zp$-valued, these are ostensibly only distributions in $L[\![\Zp^\times]\!]$. We now show that they actually lie in $\cO_L[\![\Zp^\times]\!]$.

\begin{proposition}\label{prop:get rid of c}
\begin{itemize}\setlength{\itemsep}{0pt}
	\item[(i)]	We have, for each $n$,
	\[
		{}_c \Xi_n^{[a,j]}(\phi) = \big(c^2 - c^{-j}\eta_2(c)^{-1}\omega_{\Pi}(c)^{-1} [c^{-1}]\big) \Xi^{[a,j]}_n(\phi).\]
	\item[(ii)] If $\eta_2$ is chosen so that $\eta_2 \omega_{\Pi}$ is not congruent mod $p$ to any character of $p$-power conductor, then $\Xi^{[a,j]}(\phi) \in \cO_L[\![\Zp^\times]\!]$, i.e.\ it is a measure.
\end{itemize}
\end{proposition}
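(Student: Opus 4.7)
My plan for (i) is to substitute the identity \eqref{eq:c-dependence} between ${}_c\xi_n^{[a,j]}$ and $\xi_n^{[a,j]}$ into the pairing \eqref{eq:measure finite level} defining ${}_c\Xi_n^{[a,j]}$, and simply track what each factor does under the pairing. The only non-trivial step is the operator $\langle c\rangle$, i.e.\ pullback by $\operatorname{diag}(c,c,c)^{-1}$: by adjointness under Poincar\'e duality this transfers to pullback by the central element $\operatorname{diag}(c,c,c)$ on $\phi_{\varphi\subf}$, which on the $\Pi$-isotypic subspace is multiplication by the central Hecke character at $c$. Under the convention of \S\ref{sect:dirichlet} that the adelic character restricts to $\widehat{\Z}^\times$ as the inverse of the associated Dirichlet character, this factor is exactly $\omega_\Pi(c)^{-1}$. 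The group-algebra element $[c]^{-1}=[c^{-1}]$ passes through the pairing unchanged, yielding (i).

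For (ii), write $F(c) = c^2 - c^{-j}\eta_2(c)^{-1}\omega_\Pi(c)^{-1}[c^{-1}] \in \cO_L[\![\Zp^\times]\!]$; passing (i) to the limit in $n$ gives ${}_c\Xi^{[a,j]} = F(c)\cdot\Xi^{[a,j]}$ as distributions in $L[\![\Zp^\times]\!]$. Since ${}_c\Xi^{[a,j]}$ is already a measure, it suffices to find $c$ (subject to the standing constraints) for which $F(c)$ is a unit in the Iwasawa algebra $\cO_L[\![\Zp^\times]\!]$. This ring is semi-local, with residue fields indexed by continuous characters $\overline{\chi}\colon\Zp^\times\to\overline{\F}_p^\times$; since $1+p\Zp$ is pro-$p$, every such $\overline{\chi}$ factors through $\F_p^\times = \Zp^\times/(1+p\Zp)$.

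The key observation is that the hypothesis on $\eta_2$ forces $\overline{\eta_2\omega_\Pi}$ to be non-trivial on $\ker(\widehat{\Z}^\times \to \F_p^\times)$: characters of $\widehat{\Z}^\times$ of $p$-power conductor reduce mod $\mathfrak{m}_L$ to characters of $\F_p^\times$ inflated along $\widehat{\Z}^\times\to\Zp^\times\to\F_p^\times$, so non-congruence to any such character is exactly non-triviality on this kernel. Dirichlet's theorem on primes in arithmetic progressions then produces a prime $c$, coprime to $6$ and to all relevant auxiliary conductors, satisfying $c\equiv 1\pmod p$ and $\overline{\eta_2\omega_\Pi}(c)\not\equiv 1\pmod{\mathfrak{m}_L}$. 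For such $c$ and any $\overline{\chi}$ as above, the factors $c^2$, $c^{-j}$ and $\overline{\chi}(c^{-1})$ all reduce to $1$ in $\overline{\F}_p$, so the image of $F(c)$ collapses to $1-\overline{\eta_2\omega_\Pi}(c)^{-1}\neq 0$; hence $F(c)$ is a unit, and $\Xi^{[a,j]} = F(c)^{-1}\cdot{}_c\Xi^{[a,j]}\in\cO_L[\![\Zp^\times]\!]$. The main delicate point is the clean translation of the hypothesis into non-triviality on $\ker(\widehat{\Z}^\times\to\F_p^\times)$; everything else is a standard Iwasawa-algebra/Dirichlet argument.
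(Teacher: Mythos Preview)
Your proof is correct and follows essentially the same approach as the paper. For (i) the arguments are identical: apply \eqref{eq:c-dependence} and use that the adjoint of $\langle c\rangle$ under Poincar\'e duality acts on the $\Pi$-isotypic part as $\omega_\Pi(c)^{-1}$.

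For (ii) there is a minor packaging difference worth noting. You fix a single $c\equiv 1\pmod p$ and argue that $F(c)$ is a unit in the full Iwasawa algebra $\cO_L[\![\Zp^\times]\!]$ by checking all residue fields at once. The paper instead works level by level: for each $n$ it chooses $c\equiv 1\pmod{p^n}$ (so $[c^{-1}]=1$ in $\Delta_n$ and the factor becomes a scalar in $\cO_L$), deduces $\Xi_n^{[a,j]}\in\cO_L[\Delta_n]$, and then passes to the limit. Both arguments rest on the same translation of the hypothesis---that $\overline{\eta_2\omega_\Pi}$ is nontrivial on $\ker(\widehat{\Z}^\times\to\F_p^\times)$, hence (since the reduction is automatically trivial on $1+p\Zp$) nontrivial on $\prod_{\ell\ne p}\Z_\ell^\times$---together with Dirichlet's theorem to produce a suitable $c$. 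Your version is slightly slicker (one $c$ suffices); the paper's is marginally more elementary (no appeal to the semi-local structure of the Iwasawa algebra).
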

\begin{proof}
(i) This follows from the formula \eqref{eq:c-dependence} above relating ${}_c \xi_n^{[a,j]}$ and the non-$c$-version, since the transpose of $\langle c \rangle$ with respect to Poincar\'e duality will be $\langle c^{-1} \rangle$, which acts on any vector in $\Pi$ as multiplication by $\omega_{\Pi}(c)^{-1}$.

(ii)  Choose $c$ with $c = 1 \bmod p^n$ and  $(\omega_{\Pi} \eta_2)(c) \ne 1 \bmod p$. It follows that the factor relating ${}_c \Xi_n^{[a,j]}$ and $\Xi^{[a,j]}_n$ is invertible in $\Zp[\Delta_n]$. Thus $\Xi^{[a,j]}_n$ is integral for all $n$ and $\Xi^{[a,j]} $ is a measure.
\end{proof}

The auxiliary character $\eta_2$ was introduced solely for \cref{prop:get rid of c}. We must now carry it through all notation, but its only contribution in the rest of the paper will be to the periods $\Omega_\Pi^\pm$.

\subsection{The Manin relations: compatibility in varying \texorpdfstring{$j$}{j}}\label{sec:vary j}
\begin{theorem}\label{theorem:compatibility}
For $0 \leq j \leq a$ and $f : \Zp^\times \to \overline{\Q}_p$, we have a compatibility
\[
\int_{\Zp^\times} f(x) \cdot \mathrm{d}\Xi^{[a,j]}(\phi)(x) = \int_{\Zp^\times} f(x)x^{j} \cdot \mathrm{d} \Xi^{[a,0]}(\phi)(x).
\]

\end{theorem}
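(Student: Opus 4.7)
The strategy is to realize all the smoothed measures $\{{}_c\Xi^{[a,j]}(\tilde\Pi)\}_{j=0}^a$ as specializations of a single $p$-adic family on $\Zp^\times$, after which the desired relation becomes an identity between moments. I would work first with the smoothed measures ${}_c\Xi^{[a,j]}$; the non-smoothed statement then follows because the smoothing factor of \cref{prop:get rid of c} is the specialization at $x \mapsto x^j$ of the Iwasawa-theoretic smoothing factor from \cref{sec:Eisenstein interpolation}, hence is compatible with the claimed Tate-twist relation.

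The core of the argument is to assemble the construction into an Iwasawa-theoretic class. On the Eisenstein side, Kings's theorem provides a single Eisenstein--Iwasawa class ${}_c\mathcal{EI}_{\Phi^{(p)}}$ whose $j$-th moment recovers ${}_c\mathrm{Eis}^j_{\Phift}$; pulling this back to $\widetilde{Y}^H$ and cupping with $[\nu_{1,(n)}]\cdot(\widehat{\eta_2}\circ\nu_2)$ produces a class in $\h^1_{\mathrm{Iw}}(\widetilde{Y}^H) \otimes \Zp[\Delta_n]$ whose $j$-th moment is the input to the branching step of \eqref{eq:main diagram}. On the branching side, the composite $\mathrm{br}^{[a,j]} \circ \mathrm{tw}_j$ extracts the $\nu_1^{-j}$-isotypic contribution of $\iota^*V_\lambda$; since $\nu_{1,(n)}$ is the mod-$p^n$ reduction of $\nu_1$, these operators assemble as $j$ varies into a single $\Lambda$-equivariant morphism of Iwasawa sheaves. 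This last step is handled by \cite{spherical_II}: the open orbit $\overline{P}_1(\Zp)\cdot u \cdot H(\Zp) \subset \GL_3(\Zp)$ makes $(\GL_3, H, u)$ into a spherical datum, and the multiplicity-one branching of \cref{prop:branching} lifts uniquely to an interpolated morphism of Iwasawa sheaves encoding all the $\mathrm{br}^{[a,j]} \circ \mathrm{tw}_j$ at once.

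Combining these two ingredients, pushing forward under $\iota_{n,t,\star}$, applying $\tau^n_\star$, pairing with $\phi_{\varphi_\mathrm{f}}$, and passing to the limit in $n$ via the $P_1$-ordinary projector, one obtains a single measure ${}_c\widetilde{\Xi}$ on $\Zp^\times$ whose evaluation against $f(x)x^j$ recovers $\int_{\Zp^\times} f \,\mathrm{d}{}_c\Xi^{[a,j]}$. Comparing the $j=0$ and general $j$ specializations and dividing through by the uniform smoothing factor then yields
\[
\int_{\Zp^\times} f(x)\,\mathrm{d}\Xi^{[a,j]}(\tilde\Pi)(x) = \int_{\Zp^\times} f(x)\,x^j\,\mathrm{d}\Xi^{[a,0]}(\tilde\Pi)(x).
\]

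I expect the main obstacle to be the $p$-adic interpolation of the branching laws across $j$. The maps $\mathrm{br}^{[a,j]}$ are defined individually in \cref{sec:branching laws}, and verifying that their twisted versions fit into a $\Lambda$-coherent family --- with all the required $H(\Zp)$-equivariance, compatibility with $\iota$-pushforward, and intertwining with $U_{p,1}$ --- is where \cite{spherical_II} does the essential work. Once that machinery is invoked, the rest is formal manipulation of moment maps and Iwasawa cohomology, parallel in spirit to the bookkeeping in the proof of \cref{thm:norm relation}.
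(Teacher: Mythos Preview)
Your proposal is correct and follows essentially the same approach as the paper: reduce to a mod-$p^n$ moment identity for the smoothed classes ${}_c\xi_n^{[a,j]}$, use Kings's theory for the Eisenstein--Iwasawa input on the $H$ side, and invoke \cite{spherical_II} to interpolate the branching maps. One technical point the paper flags that you should be aware of: the result is \emph{not} quite a direct consequence of the main theorem of \cite{spherical_II}, because the weight $\lambda = (a,0,-a)$ is not induced from a one-dimensional character of the Levi $L_1$ and the level group need not contain $L_1(\Zp)$; the workaround is that the normalized action of $\tau^n$ on $\sV_\lambda(\ZZ/p^n)$ kills all torus eigenspaces except the highest relative weight space for $Z(L_1)$, so one only needs to compare the images of ${}_c z_n^{[a,j]}$ and $\mom^j_{\Delta_n}({}_c z_n^{[a,0]})$ in that single weight space, where the branching vectors $\mathrm{br}^{[a,j]}$ can be normalized to have a common value at $u^{-1}$.
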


\begin{proof}
We shall prove this by adapting the methods of \cite{spherical_II}. However, the result we seek is not quite a direct consequence of the main theorem of \emph{op.cit.}, since the weight $\lambda = (a, 0, -a)$ of our coefficient sheaf is not induced from a 1-dimensional character of the Levi $L_1$ of $P_1$, and our level group does not always contain $L_1(\Zp)$. So we shall briefly indicate the modifications needed to the theory of \emph{op.cit.} in order to prove the theorem at hand.

Since our classes are built up from the norm-compatible families ${}_c \xi^{[a, j]}_n$, it suffices to prove a compatibility for ${}_c \xi^{[a, j]}_n$ and ${}_c \xi^{[a, 0]}_n$ modulo $p^n$, regarded as elements of the module \[\h^3(Y^{\GL_3}(\cV_n), \sV_\lambda(\Z/p^n)) \otimes (\Z/p^n)[\Delta_n].\] More precisely, we shall prove the following: the map $\operatorname{mom}^j_{\Delta_n}: (\Z/p^n)[\Delta_n] \to (\Z/p^n)[\Delta_n]$, defined on group elements by sending $[a]$ to $a^j[a]$, maps ${}_c \xi^{[a, 0]}_n$ to ${}_c \xi^{[a, j]}_n$.

We have ${}_c \xi^{[a, j]}_n = [(\tau^n)_\star \otimes 1]\left({}_c z^{[a, j]}_n\right)$, where $\tau$ denotes $\sdthree{p}{1}{1}$. The action of $\tau_\star$ on our coefficients is given by a map of $\cU_n$-representations
\[ (\tau^n)_{\sharp}: V_\lambda(\Zp) \to \tau_n^\star\left( V_\lambda(\Zp)\right)\]
which is just $p^{an}$ times the action of $\tau^{-n}$ on $V_\lambda(\Qp)$; the factor $p^{an}$ is the image of $\tau^n$ under the the character $\lambda$ (compare Definition 2.5.1 of \cite{spherical_II}).

Since $V_\lambda(\Zp)$ is an admissible lattice, it is a direct sum of eigenspaces for the diagonal torus, corresponding to the weights of $V_\lambda$. The map $(\tau^n)_\sharp$ acts on each of these by a non-negative power of $p^n$; so on the mod $p^n$ coefficient sheaf $\sV_\lambda(\Z/p^n)$, it is zero on all eigenspaces except the highest relative weight space for the torus $Z(L_1)$. So it suffices to prove that the classes ${}_c z^{[a, j]}_n$ and $\mom^j_{\Delta_n}\left( {}_c z^{[a, 0]}_n\right)$ have the same image in the highest relative weight space.

Exactly as in \cite{spherical_II}, the homomorphism $\operatorname{br}^{[a, j]}$ is determined by the image of the highest-weight vector of $V_{(j, 0; -j)}^H$; let us call this vector $f^{a, j}$. If we model $V_\lambda$ as a space of functions on $\bar{N}_1 \backslash G$ taking values in the weight $\lambda$ representation of $L_1$, and satisfying $f(\bar{n} \ell g) = \ell \cdot f(g)$, then the value $f(u)$ determines $f^{a, j}$ uniquely on $\overline{P}_1 u^{-1} Q_H^0$, which is open in $G$; so $f^{a, j}$ is uniquely determined by $f^{a, j}(u^{-1})$, and this value lies in a one-dimensional subspace of the weight $\lambda$ $L_1$-representation which is independent of $j$ (in fact it is the \emph{lowest} weight space). So we may suppose that the $f^{a, j}$ are normalised so that $f^{a, j}(u^{-1})$ is independent of $j$.

Then, exactly as in \cite{spherical_II}, we obtain a compatibility between moment maps for $G$ and for $H$ under pushforward, where $H = \GL_2 \times \GL_1$ and $G = \GL_3 \times \GL_1$. The moment map for $G$ gives the twisting operator $\operatorname{mom}^j_{\Delta_n}$; and the twisting operator on $H$ maps the weight 0 Eisenstein class to its weight $j$ equivalent, by Kings' theory of $\Lambda$-adic Eisenstein classes (see \cite{KLZ17} for a summary in our present notations).
\end{proof}

\section{Values of the distribution as global Rankin--Selberg integrals}\label{sec:measure to integral}

Let $\Pi$ be a RACAR of $\GL_3(\A)$ of weight $\lambda = (a,0,-a)$, and let $\sigma_p$ be a finite-slope unramified $P_1$-refinement (as in \S\ref{sec:ordinarity}). Recall $\sigma_p$ is determined by $\alpha_p = \sigma_p(p) \neq 0$. We now apply the previous section to attach distributions to appropriate eigenforms in $\Pi_f$, and show, in \cref{prop:measure to integral}, that they compute global $\GL_3 \times \GL_2$ Rankin--Selberg integrals for $\Pi$ with Eisenstein series.

 Let $\varphi\subf = \otimes \varphi_\ell \in \Pi\subf$ such that:
\begin{itemize}\setlength{\itemsep}{0pt}
\item[(a)] $\varphi_p$ is fixed by $\cU_{1,p}^{(P_1)}(p^{r(\Pi,\alpha_p)})$ and $U_{p,1}\varphi_p = p^{a+1}\alpha_p \cdot \varphi_p$ (cf.\ \cref{prop:P_1 conductor}); and
\item[(b)] $W_{\varphi\subf}$ is algebraic, so that we get a class $\phi_{\varphi\subf} \defeq \phi_\Pi(W_{\varphi\subf})/\Theta_\Pi \in \hc{2}(Y^{\GL_3},\sV_\lambda^\vee(\cO_L))$, for $L/\Qp$ sufficiently large and $\phi_\Pi, \Theta_\Pi$ as in \S\ref{sec:auto cohom classes}, normalised as in \S\ref{sec:integral periods}.
\end{itemize}

We fix tame data $\Phi^{(p)} \in \cS(\A\subf^{(p),2},\Z)$ and $\cU^{(p)} \subset \GL_3(\A\subf^{(p)})$ such that $\varphi\subf$ (resp.\ $\Phi^{(p)}$) is fixed by $\cU^{(p)}$ (resp.\ $\cU^{(p)}\cap H$). These choices fix $\Phi_{\mathrm{f},n}$ and $\cV_n$, but we otherwise drop them from notation throughout this section.

By \cref{prop:get rid of c}, attached to all this data, and a suitable fixed auxiliary character $\eta_2$, is a distribution $\Xi^{[a,j]}(\phi_{\varphi\subf}) \in L[\![\Zp^\times]\!]$; and when $\sigma_p$ is $P_1$-ordinary, $\Xi^{[a,j]}(\phi_{\varphi\subf}) \in \cO_L[\![\Zp^\times]\!]$ is a measure.

\subsection{Values of the distribution}\label{sec:first values}
Let $\eta$ be a Dirichlet character of conductor $p^{n_1}$, and let $n = \mathrm{max}(n_1,1)$. Consider $\eta$ as a character on $\Delta_n = (\Z/p^n\Z)^\times$, and lift it it a character on $\Zp^\times$ under the natural surjection. We integrate this against our distribution. This factors through $\Delta_n$, and writing $x$ for the variable on $\Zp^\times$, we have
\begin{align}\label{eq:measure 1}
\int_{\Zp^\times} \eta(x) \  \cdot \ &\mathrm{d}\Xi^{[a,j]}\big(\phi_{\varphi\subf}\big)(x) = 	(p^{a+1}\alpha_p)^{-n} \int_{\Delta_n} \eta(x)  \cdot \mathrm{d}\Xi_n^{[a,j]}\Big(\phi_{\varphi\subf}\Big)(x)\notag\\
&= (p^{a+1}\alpha_p)^{-n}\big[1 \otimes \eta\big] \left( \operatorname{vol}(\cU^{(p)} \cap H) \cdot \left\langle \left(\phi_{\varphi\subf}\right)_n,  \xi_{n}^{[a, j]} \right\rangle_{\cV_n}\right).
\end{align}
We consider this value in a finite extension of $\Qp$, but by construction (via the embedding $\iota$) it actually lies in a number field. For the rest of \S\ref{sec:measure to integral}, we will forget this algebraicity and, abusing notation, consider it just as a complex number.

To study \eqref{eq:measure 1}, we use the description and notation of \cref{rem:connected components}. Set $t = n$, and for $x \in \Delta_n$, let $i_{x} : \widetilde{Y}^H_{x} \hookrightarrow \widetilde{Y}^H(\cU_{n}^H)$ be the natural inclusion. By the remark, the coefficient of $[x]$ in $ \zeta_n^{[a,j]}$ is
\[
\sum_{y \in (\Z/D)^\times} \eta_2(y) \cdot p^{an} \ \tau_\star^n \circ u_\star \circ \iota_{\star} \circ \mathrm{br}^{[a,j]}_\star \circ \mathrm{tw}_{j} \circ i_{x,y}^\star (\mathrm{Eis}_{\Phi\subf^{n}}^{j}).
\]
Substituting, we see that \eqref{eq:measure 1} is equal to
\begin{align}
&= \tfrac{ p^{an}\operatorname{vol}(\cU^{(p)} \cap H)}{(p^{a+1}\alpha_p)^n}	\sum_{x \in \Delta_n} \eta(x)\\ &\hspace{10pt}\times \sum_{y \in (\Z/D)^\times} \eta_2(y)\cdot \left\langle \left(\phi_{\varphi\subf}\right)_n, \	\tau_\star^n  u_\star \iota_{\star} \mathrm{br}^{[a,j]}_\star  \mathrm{tw}_{j} i_{x,y}^\star(\mathrm{Eis}_{\Phi\subf^{2n}}^{j}) \right\rangle_{\cV_n}\notag\\
&= \tfrac{  \operatorname{vol}(\cU^{(p)} \cap H)}{p^{n}\alpha_p^n}	\sum_{x \in \Delta_n} \eta(x)\label{eq:measure 2}\\
&\hspace{10pt}\times \sum_{y \in (\Z/D)^\times} \eta_2(y) \cdot \left\langle \mathrm{br}^{[a,j],\star}  \iota^{\star}  u^\star \tau^{n,\star}[\left(\phi_{\varphi\subf}\right)_n], \ 	  \mathrm{tw}_{j}  i_{x,y}^\star(\mathrm{Eis}_{\Phi\subf^{2n}}^{j}) \right\rangle_{\widetilde{Y}^H_{x}}, \notag
\end{align}
via adjointness of pushforward and pullback under the Poincar\'e duality pairing.

\subsection{Cup products as integrals}\label{sec:cup product integral}
We now express these cup products as integrals. This follows a standard, if technical, procedure (cf.\ \cite[Lem.\ 1.3]{Mah98}, \cite[Lem.\ 3.2]{Mah00}). This technicality will manifest itself exclusively in a local zeta  integral at infinity.

\subsubsection{The differential form for $\Pi$}\label{sec:diff form pi}
Recall $\zeta_\infty \in \h^2\big(\mathfrak{gl}_3,K_{3,\infty}^\circ; \Pi_\infty \otimes V_{\lambda}^{\vee}(\C)\big)$ from \eqref{eq:omega infinity}.
We have
\[
\h^i\big(\mathfrak{gl}_3,K_{3,\infty}^\circ; \Pi_\infty \otimes V_{\lambda}^{\vee}\big) \subset \bigwedge{}^i\left(\mathfrak{gl}_3/\mathfrak{K}_{3,\infty}^\circ\right)^\vee \otimes \Pi_\infty \otimes V_\lambda^\vee(\C),
\]
where $\mathfrak{K}_{3,\infty}^\circ = \mathrm{Lie}(K_{3,\infty}^\circ)$. Choose (arbitrary) bases $\{\delta_1',...\delta_5'\}$ of $\left(\mathfrak{gl}_3/\mathfrak{K}_{3,\infty}^\circ\right)^\vee$ and $\{v_\alpha\}$ of $V_\lambda^\vee(\C)$. Given these choices, there exist vectors $\varphi_{\infty,r,s,\alpha} \in \Pi_\infty$  such that
\[
\zeta_\infty = \sum_{r,s = 1,...,5} \sum_{\beta} \big[\delta_r' \wedge \delta_s'\big] \otimes \varphi_{\infty,r,s,\alpha} \otimes v_\alpha.
\]
Letting $\varphi_{r,s,\alpha} \defeq \varphi_{\infty,r,s,\alpha} \otimes \varphi\subf \in \Pi$, we see the differential form associated to $\phi_{\varphi\subf} = \phi_\Pi(\varphi\subf)/\Theta_\Pi$ is
\begin{equation}\label{eq:diff form pi}
\Theta_\Pi^{-1}\sum_{r,s = 1,...,5} \sum_{\beta} \big[\delta_r' \wedge \delta_s'\big] \otimes \varphi_{r,s,\alpha} \otimes v_\alpha \in \bigwedge{}^i\left(\mathfrak{gl}_3/\mathfrak{K}_{3,\infty}^\circ\right)^\vee \otimes \Pi \otimes V_\lambda^\vee(\C).
\end{equation}

\subsubsection{The Eisenstein differential}
By \cref{cor:betti-eisenstein}, the differential on $Y^{\GL_2}(\cU_{n}^{H}\cap\GL_2)$ associated to $\mathrm{Eis}_{\Phif}^{j}$ is $-\cE_{\Phif}^{j+2}(g) \cdot (\mathrm{d}z)^{\otimes j} \cdot \mathrm{d}\tau$, recalling $\cE_{\Phif}^{j+2}(g) \in I(\|\cdot\|^{-1/2}, \|\cdot\|^{j+1/2})$ \eqref{eq:classical E j chi}.

For comparison with the $\GL_3$ setting, and with \cite{Mah98,Mah00}, it is convenient to rephrase this in the (inexplicit) language of \cref{sec:diff form pi}. Combining the above discussion, we see we may choose bases $\{\delta_1,\delta_2\}$ of $(\mathfrak{gl}_2/\mathfrak{K}_{2,\infty}^\circ)^\vee$ (corresponding to $\mathrm{d}\tau$) and $\{w_\beta^{[j]}\}$ of $V_{(0,-j)}^{\GL_2}(\C)$ (corresponding to $(\mathrm{d}z)^{\otimes j}$) such that as a differential form on $Y^{\GL_2}$, we may describe $\mathrm{Eis}_{\Phif}^{j}$ as
\begin{equation}\label{eq:diff form Eis}
\Big[\delta_1 + \delta_2\Big] \otimes \cE^{j+2}_{\Phif} \otimes \Big[\sum_\beta w_\beta^{[j]}\Big] \in \left(\mathfrak{gl}_2/\mathfrak{K}_{2,\infty}^\circ\right)^\vee \otimes I\Big(\|\cdot\|^{-\tfrac{1}{2}}, \|\cdot\|^{j+\tfrac{1}{2}}\Big) \otimes V_{(0,-j)}^{\GL_2}(\C)
\end{equation}
(where $\cE_{\Phif}^{j+2}(g) \in I \big(\|\cdot\|^{-1/2}, \|\cdot\|^{j+1/2}\big) $ by \cref{cor:compatibility} and \eqref{eq:I_j(chi)}, with $\eta = 1$).

\subsubsection{The cup product on components}
We pass to $H$. We identify the basis elements $\{\delta_1,\delta_2\}$ with their image under the natural pullback $\left(\mathfrak{gl}_2/\mathfrak{K}_{2,\infty}^\circ\right)^\vee \to \left( \mathrm{Lie}(H(\R))/\mathrm{Lie}(K_{H,\infty}^\circ \iota^{-1}(Z_{G,\infty}^\circ))\right)^\vee$, and then extend to a basis $\{\delta_1,\delta_2,\delta_3\}$ of this latter space. By construction, the pullback of $\mathrm{Eis}_{\Phif}^{j}$ to $\widetilde{Y}^H$ corresponds to a differential in which only $\delta_1$ and $\delta_2$ appear.

Since the basis $\{\delta_i'\}$ of $\left(\mathfrak{gl}_3/\mathfrak{K}_{3,\infty}^\circ\right)^\vee$ was arbitrary, we may rescale so that under the map
\[
\iota^\ast : \left(\mathfrak{gl}_3/\mathfrak{K}_{3,\infty}^\circ\right)^\vee \longrightarrow \left( \mathrm{Lie}(H(\R))/\mathrm{Lie}(K_{H,\infty}^\circ \iota^{-1}(Z_{G,\infty}^\circ))\right)^\vee,
\]
we have $\iota^\ast(\delta_i') = \delta_i$ for $i = 1,2,3$ and $\iota^\ast(\delta_i') = 0$ for $i = 4,5$ (cf.\ \cite[p.277]{Mah00}).

Recall $\langle-,-\rangle_{a,j}$ from \eqref{eq:branching pairing}. By definition,
\begin{multline*}
\Big\langle \mathrm{br}^{[a,j],\star}  \iota^{\star}  u^\star  \tau^{n,\star}[\left(\phi_{\varphi\subf}\right)_n], \ 	  \mathrm{tw}_{j}  i_{x}^\star(\mathrm{Eis}_{\Phi\subf^{2n}}^{j}) \Big\rangle_{\widetilde{Y}^H_{x,y}}\\
= \int_{\widetilde{Y}^H_{x,y}} \bigg[ \Theta_\Pi^{-1}\sum_{r,s,t = 1}^3 \sum_{\alpha,\beta} \left\langle v_\alpha, w_\beta^{[j]}\right\rangle_{a,j} \cdot \varphi_{r,s,\alpha}\Big(\iota(h)u\tau^{n}\Big)  \\
\times \cE_{\Phif}^{j+2} \big(\mathrm{pr}_1(h)\big) \cdot \|\nu_1^{-j}(h)\|\cdot  \big[\delta_r\wedge \delta_s \wedge \delta_t\big]\bigg].
\end{multline*}
Since only $\delta_1$ and $\delta_2$ arise in the Eisenstein differential, the only non-zero terms in the sum over $r,s,t$ are $rst = 132,231$. As $\widetilde{\cH}_H$ is 3-dimensional, we identify $\delta_1\wedge \delta_2\wedge \delta_3$ with a fixed choice of Haar measure $\mathrm{d}h_\infty$ on $H(\R)$. The expression becomes
\begin{equation}\label{eq:cup product component}
= \Theta_\Pi^{-1}\sum_{r,s,t} \varepsilon_{rst} \sum_{\alpha,\beta} \left\langle v_\alpha, w_\beta^{[j]}\right\rangle_{a,j}\int_{\widetilde{Y}_{x,y}^H}  \varphi_{r,s,\alpha}\Big(\iota(h)u\tau^{n}\Big)   \cE_{\Phif}^{j+2} \big(\mathrm{pr}_1(h)\big) \cdot \|\nu_1^{-j}(h)\| \cdot \mathrm{d}h,
\end{equation}
where $\varepsilon_{231} = 1$, $\varepsilon_{132} = -1$, and $\varepsilon_{rst} = 0$ otherwise; and $\mathrm{d}h = \mathrm{d}h\subf\mathrm{d}h_\infty$, recalling $\mathrm{d}h\subf$ from \cref{sec:pairing with cusp form}.

\subsection{Passing to the Rankin--Selberg integral}
By definition, on $\widetilde{Y}_{x,y}^{H}$ we have $x = \nu_1(h), y =\nu_2(h)$. Moreover, the characters $\eta$ on $\Zp^\times$ and $\eta_2$ on $(\Z/D)^\times$ lift to the (finite order) Hecke characters $\etahat$  and $\widehat{\eta_2}$ (as in \cref{sect:dirichlet}). Combining \eqref{eq:measure 1}, \eqref{eq:measure 2} and \eqref{eq:cup product component} now gives
\begin{multline*}
\int_{\Zp^\times} \eta(x)  \cdot \mathrm{d}\Xi^{[a,j]}(\phi_{\varphi\subf})(x,y) =
\tfrac{\operatorname{vol}(\cU^{(p)} \cap H)}{p^{n}\alpha_{p}^n\Theta_\Pi} \sum_{r,s,t} \varepsilon_{rst} \sum_{\alpha,\beta} \left\langle v_\alpha, w_\beta^{[j]}\right\rangle_{a,j}\\
\int_{\widetilde{Y}^H(\cU_{n}^H)}  \varphi_{r,s,\alpha}\Big(\iota(h)u\tau^{n}\Big)   \cE_{\Phif}^{j+2} \big(\mathrm{pr}_1(h)\big) \cdot \etahat\|\cdot\|^{-j}(\nu_1(h))\cdot  \widehat{\eta_2}(\nu_2(h)) \mathrm{d}h.
\end{multline*}
Up to renormalising our choices at infinity (and hence $\mathrm{d}h_\infty$), we may take $\mathrm{vol}(K_{H,\infty}^\circ) = 1$. Also cancelling the $\mathrm{vol}(\cU^{(p)}\cap H)$ -- which, at $p$, introduces $\mathrm{vol}(\cU_{n,p}^H)$ -- we thus obtain
\begin{multline*}
= \tfrac{\operatorname{vol}(\cU_{n,p}^H)^{-1}}{{p^{n}\alpha_{p}^n} \Theta_\Pi} \sum_{r,s,t} \varepsilon_{rst} \sum_{\alpha,\beta} \langle v_\alpha, w_\beta^{[j]}\rangle_{a,j}
\int_{H(\Q)\backslash H(\A)/\R_{>0}}  \varphi_{r,s,\alpha}\Big(\iota(h)u\tau^{n}\Big)   \\ \times \cE_{\Phif}^{j+2} \big(\mathrm{pr}_1(h)\big) \cdot \etahat\|\cdot\|^{-j}(\nu_1(h)) \cdot \widehat{\eta_2}(\nu_2(h))   \mathrm{d}h,
\end{multline*}
where $z \in \R_{>0}$ embeds as $[\smallmatrd{z}{}{}{z}, z]$. Write $h = (\gamma, z) \in \GL_2(\A) \times \GL_1(\A)$, so $\nu_1(h) = \tfrac{\det(\gamma)}{z}$, and
\begin{align*}
&= \tfrac{\operatorname{vol}(\cU_{n,p}^H)^{-1}}{{p^{n}\alpha_{p}^n} \Theta_\Pi} \sum_{r,s,t} \varepsilon_{rst} \sum_{\alpha,\beta} \langle v_\alpha, w_\beta^{[j]}\rangle_{a,j}
\int_{H(\Q)\backslash H(\A)/\R_{>0}}  \varphi_{r,s,\alpha}\Big(\iota(\gamma,z)u\tau^{n}\Big)   \\
&\hspace{120pt}\times \cE_{\Phif}^{j+2} \big(\gamma \big) \cdot \etahat\|\cdot\|^{-j}\left(\tfrac{\det(\gamma)}{z}\right)\cdot \widehat{\eta_2}(z)  \mathrm{d}(\gamma,z)\\
&= \tfrac{\operatorname{vol}(\cU_{n,p}^H)^{-1}}{p^{n}\alpha_{p}^n \Theta_\Pi} \sum_{r,s,t} \varepsilon_{rst} \sum_{\alpha,\beta} \langle v_\alpha, w_\beta^{[j]}\rangle_{a,j}
\int_{H(\Q)\backslash H(\A)/\R_{>0}}  \varphi_{r,s,\alpha} \Big(\iota\left(\tfrac{\gamma}{z},1\right)u\tau^{n}\Big)\omegahat_\Pi(z)   \\
&\hspace{120pt}\times \cE_{\Phif}^{j+2} \big(\gamma \big) \cdot \etahat\|\cdot\|^{-j}\Big(\det\left(\tfrac{\gamma}{z}\right) \Big) \cdot \etahat\widehat{\eta_2}\|\cdot\|^{-j}(z)  \mathrm{d}(\gamma,z),
\end{align*}
noting $\tfrac{\det(\gamma)}{z} = z\cdot \det\big(\tfrac{\gamma}{z}\big)$. Now make the change of variables $g = \gamma/z$: this identifies $H(\A)$ with itself, but now $\R_{>0}$ embeds as $z \mapsto [\smallmatrd{1}{}{}{1}, z]$, so the domain becomes
\[
\GL_2(\Q)\backslash \GL_2(\A) \times \Q^\times\backslash\A^\times/\R_{>0} = \big[\GL_2(\Q)\backslash \GL_2(\A)\big] \times \widehat{\Z}^{\times}.
\]
By translation-invariance of Haar measures, we have $dg = d\gamma$. As $\|z\|^{-j} = 1$ for $z \in \widehat{\Z}^\times$, we get
\begin{align}
&= \tfrac{\operatorname{vol}(\cU_{n,p}^H)^{-1}}{p^{n}\alpha_{p}^n \Theta_\Pi} \sum_{r,s,t} \varepsilon_{rst} \sum_{\alpha,\beta} \langle v_\alpha, w_\beta^{[j]}\rangle_{a,j}
\int_{[\GL_2(\Q)\backslash \GL_2(\A)] \times \widehat{\Z}^{\times}}  \varphi_{r,s,\alpha} \Big(\iota\left(g,1\right)u\tau^{n}\Big)  \notag  \\
&\hspace{120pt}\times \cE_{\Phif}^{j+2} \big(g z\big)  \cdot \omegahat_\Pi\etahat\widehat{\eta_2}(z)  \cdot \etahat\|\cdot\|^{-j}\big(\det\left(g\right) \big) \mathrm{d}(g,z),\label{eq:measure 3}
\end{align}

\begin{lemma}\label{lem:twist Eisenstein}
Let $k \geq 0$ and $\chi$ be a Dirichlet character. Then we have
\[
\int_{\widehat{\Z}^\times} \cE_{\Phif}^{j+2}(gz)\chihat(z)\mathrm{d}z = E_{\Phi}^{j+2,\chi}(g) = \|\det g\|^{\tfrac{j}{2}} E_\Phi\big(g; \chihat, -\tfrac{j}{2}\big).
\]
\end{lemma}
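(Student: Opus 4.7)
\medskip
\noindent\emph{Proof proposal.} The second equality is immediate from the definition \eqref{eq:E j chi} of $E^{j+2,\chi}_\Phi$, so the content lies in the first. The plan is to exchange the adelic integral over $\widehat{\Z}^\times$ with the linear association $\Phif \mapsto \cE^{j+2}_\Phif$, recognise the resulting Schwartz function as $R_\chi(\Phif)$, and then invoke the adelic-classical comparison \eqref{eq:compatibility} to close the argument.

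Concretely, for $z \in \widehat{\Z}^\times$ viewed as the scalar matrix $\stbt{z}{}{}{z} \in \GL_2(\Af)$, the $\GL_2(\Af)$-equivariance of $\Phif \mapsto \cE^{j+2}_{\Phif}$ built into \cref{def:classical eisenstein}(ii) (see also \cref{rem:eisenstein functional equation}) reads
\[
\cE^{j+2}_{\Phif}(g\subf z, \tau; s) = \cE^{j+2}_{z \cdot \Phif}(g\subf, \tau; s),
\]
where $(z \cdot \Phif)(x,y) = \Phif(zx, zy)$ is the natural right-translation action of $\stbt{z}{}{}{z}$ on the Schwartz datum. Since right-multiplying $g$ by $z \in \widehat{\Z}^\times$ leaves the Archimedean factor $g_\infty$ unchanged, the prefactor in the extension \eqref{eq:classical on GL2(A)} is unaffected, and the identity lifts to the adelic function: $\cE^{j+2}_\Phif(gz) = \cE^{j+2}_{z \cdot \Phif}(g)$.

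Integrating against $\chihat(z)\,\mathrm{d}z$, using linearity of the Eisenstein symbol together with the definition \eqref{eq:R_chi} of $R_\chi$, I obtain
\[
\int_{\widehat{\Z}^\times} \cE^{j+2}_\Phif(gz)\chihat(z)\,\mathrm{d}z \;=\; \cE^{j+2}_{R_\chi(\Phif)}(g) \;=\; \cE^{j+2,\chi}_\Phif(g),
\]
the last equality being \cref{def:classical eisenstein}(iii). Applying \eqref{eq:compatibility} then identifies $\cE^{j+2,\chi}_\Phif(g)$ with $E^{j+2,\chi}_\Phi(g)$, completing the argument. The whole proof is essentially a single Fubini step combined with the Schwartz-theoretic equivariance of the Eisenstein symbol, so no serious obstacle is anticipated; the only point requiring care is ensuring that the Archimedean normalisations in \eqref{eq:classical on GL2(A)} match between the unadorned and $\chi$-twisted versions, but this is automatic because $z \in \widehat{\Z}^\times$ has trivial Archimedean component, so no sign or $\chihat_\infty$ factor is picked up by the translation.
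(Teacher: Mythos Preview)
Your proposal is correct and follows essentially the same approach as the paper: both use the $\GL_2(\Af)$-equivariance $\cE^{j+2}_\Phif(gz)=\cE^{j+2}_{z\cdot\Phif}(g)$, integrate to obtain $\cE^{j+2}_{R_\chi(\Phif)}(g)=\cE^{j+2,\chi}_\Phif(g)$ via \cref{def:classical eisenstein}(iii), and then invoke \eqref{eq:compatibility} and \eqref{eq:E j chi}. Your additional remark that the Archimedean factor is untouched because $z\in\widehat{\Z}^\times$ has trivial infinite component is a helpful clarification the paper leaves implicit.
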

\begin{proof}
By definition, for $z \in \widehat{\Z}^\times$ we have $\cE_{\Phif}^{j+2}(gz) = \cE_{z\cdot \Phif}^{j+2}(g)$, where $z$ acts as $\smallmatrd{z}{}{}{z}$. Substituting the definitions, we see that the left-hand side is $\cE_{R_\chi(\Phif)}^{j+2}(g) = \cE_{\Phif}^{j+2,\chi}(g)$, where the equality follows from \cref{def:classical eisenstein}(iii). We conclude by \eqref{eq:compatibility} (giving the first equality) and \eqref{eq:E j chi} (the second).
\end{proof}

Taking $\chi = \omega_\Pi\eta\eta_2$, and collapsing \eqref{eq:measure 3} with \cref{lem:twist Eisenstein}, we conclude
\begin{multline*}
\int_{\Zp^\times} \eta(x)  \cdot \mathrm{d}\Xi^{[a,j]}(\phi_{\varphi\subf})(x) = \tfrac{\operatorname{vol}(\cU_{n,p}^H)^{-1}}{p^{n}\alpha_{p}^n \Theta_\Pi} \sum_{r,s,t} \varepsilon_{rst} \sum_{\alpha,\beta} \langle v_\alpha, w_\beta^{[j]}\rangle_{a,j}\\
\int_{\GL_2(\Q)\backslash \GL_2(\A)}  \varphi_{r,s,\alpha} \Big(\iota(g,1)u\tau^{n}\Big)
\cdot E_{\Phi}\big(g; \omegahat_\Pi\etahat\widehat{\eta_2}, -\tfrac{j}{2}\big) \cdot  \etahat(\det g) \|\det g\|^{-\tfrac{j}{2}}  \mathrm{d}g.
\end{multline*}

\begin{definition}\label{def:global zeta integral}
Let $\Pi$ be a unitary automorphic representation of $\GL_3(\A)$, and $\chi_1$ and $\chi_2$ be Dirichlet characters. For $\varphi \in \Pi$ and $\Phi \in \cS(\A^2,\C)$, define
\begin{multline*}
	\cZ\big(\varphi, \Phi; \chi_1,\chi_2,s_1,s_2\big) \defeq \\
	\int_{\GL_2(\Q) \backslash \GL_2(\A)} \varphi\Big(\iota(g,1)
	\Big)\cdot E_\Phi(g; \chihat_2, s_2)\cdot \chihat_1(\det g)\|\det g\|^{s_1-\tfrac{1}{2}} \mathrm{d}g.
\end{multline*}
\end{definition}

Summarising all of the above, we have shown:

\begin{proposition}\label{prop:measure to integral}
Let $\eta$ be a Dirichlet character of conductor $p^n$. Then
\begin{multline*}
	\int_{\Zp^\times} \eta(x)  \cdot \mathrm{d}\Xi^{[a,j]}(\phi_{\varphi\subf})(x) =  \frac{\operatorname{vol}(\cU_{n,p}^H)^{-1}}{p^{n}\alpha_{p}^n \Theta_\Pi} \sum_{r,s,t} \varepsilon_{rst}\\
	\sum_{\alpha,\beta} \langle v_\alpha, w_\beta^{[j]}\rangle_{a,j} \cZ\Big(u\tau^n\cdot \varphi_{r,s,\alpha}, \Phi; \eta, \omega_\Pi\eta\eta_2, \tfrac{1-j}{2},-\tfrac{j}{2}\Big).
\end{multline*}
\end{proposition}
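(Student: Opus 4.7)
The proposition is the endpoint of the chain of computations already laid out in \cref{sec:first values,sec:cup product integral}, so the plan is to chain these together carefully and recognise the result in the form of \cref{def:global zeta integral}.

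First, I would unwind $\int_{\Zp^\times} \eta\cdot\mathrm{d}\Xi^{[a,j]}(\tilde\Pi)$ at finite level: since $\eta$ has conductor $p^n$, it factors through $\Delta_n$, so the integral is $[1\otimes\eta]$ applied to the level-$\cV_n$ class, and the fact that $\phi_{\varphi\subf}$ is a $U_{p,1}$-eigenform with eigenvalue $p^{a+1}\alpha_p(p)$ extracts the prefactor $(p^{a+1}\alpha_p(p))^{-n}$. Combining with the connected-component decomposition of \cref{rem:connected components} and moving the pushforwards across Poincar\'e duality by adjointness produces \eqref{eq:measure 2}.

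Next, I would convert the pairing on each $\widetilde Y^H_{x,y}$ into an explicit integral of automorphic forms. For $\phi_{\varphi\subf}$, I would expand the generator $\zeta_\infty$ from \eqref{eq:omega infinity} in a basis $\{\delta_i'\}$ of $(\mathfrak{gl}_3/\mathfrak{K}_{3,\infty}^\circ)^\vee$ and a basis $\{v_\alpha\}$ of $V_\lambda^\vee(\C)$, producing vectors $\varphi_{r,s,\alpha}\in\Pi$ and the differential form \eqref{eq:diff form pi}. For the Eisenstein class, \cref{cor:betti-eisenstein} supplies the analogous expansion \eqref{eq:diff form Eis} in terms of $\cE^{j+2}_\Phif$. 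The crucial normalisation, following \cite[p.277]{Mah00}, is to choose $\{\delta_i'\}$ so that $\iota^*(\delta_i') = \delta_i$ for $i=1,2,3$ and $\iota^*(\delta_i')=0$ for $i=4,5$; this kills most of the $5\times 5$ sum and leaves only $(r,s,t)\in\{(2,3,1),(1,3,2)\}$, yielding \eqref{eq:cup product component}.

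Finally, I would change variables from $h\in H(\A)$ to $(g,z)=(\gamma/z,z)\in\GL_2(\A)\times\GL_1(\A)$, which splits the domain as $[\GL_2(\Q)\backslash\GL_2(\A)]\times\widehat{\Z}^\times$. \cref{lem:twist Eisenstein} applied with $\chi=\omega_\Pi\eta\eta_2$ collapses the $\widehat{\Z}^\times$-integral into the adelic Eisenstein series $E_\Phi(g;\widehat{\omega_\Pi\eta\eta_2},-j/2)$, and the resulting global integral is precisely $\cZ\bigl(u\tau^n\cdot\varphi_{r,s,\alpha},\Phi;\eta,\omega_\Pi\eta\eta_2,\tfrac{1-j}{2},-\tfrac{j}{2}\bigr)$. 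The main obstacle is bookkeeping of normalisations: the $p^{an}$ from \cref{def:zaj} combines with $(p^{a+1}\alpha_p(p))^{-n}$ to give the $(p^n\alpha_p(p)^n)^{-1}$ prefactor; the tame volume $\mathrm{vol}(\cU^{(p)}\cap H)$ is split into $\mathrm{vol}(\cU_{n,p}^H)^{-1}$ once one passes to the adelic quotient $H(\Q)\backslash H(\A)/\R_{>0}$ and normalises $\mathrm{vol}(K_{H,\infty}^\circ)=1$; and the twist $\|\nu_1^{-j}\|$ introduced in \cref{sec:construction branching} is the source of the shift $s_1=(1-j)/2$ and of the $\|\det g\|^{-j/2}$ appearing inside $\cZ$.
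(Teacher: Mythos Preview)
Your proposal is correct and follows essentially the same approach as the paper: the proposition is precisely the cumulative output of \cref{sec:first values,sec:cup product integral} and the subsequent change of variables, and you have identified all the key steps (finite-level unwinding, adjointness, differential-form expansion with the Mahnkopf basis normalisation, the substitution $g=\gamma/z$, and \cref{lem:twist Eisenstein}). Your bookkeeping of the prefactors $(p^n\alpha_p^n)^{-1}$, $\operatorname{vol}(\cU_{n,p}^H)^{-1}$, and the origin of $s_1=(1-j)/2$ from the twist $\|\nu_1^{-j}\|$ also matches the paper.
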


Exactly as in \cite[Prop.\ 3.1]{Mah98} (following the proof of \cite[Prop.\ 3.3]{JS_EulerProducts_II}), for $\Re(s) \gg 0$ the integral $\cZ(-)$ has an Eulerian factorisation in terms of local zeta integrals, namely
\begin{equation}\label{eq:global to local}
\cZ\big(\varphi, \Phi; \chi_1,\chi_2,s_1,s_2\big) = \prod_v Z_v\big(W_{\varphi,v}, \Phi_v; \chihat_{1,v},\chihat_{2,v},s_1,s_2\big).
\end{equation}
We will define and study the local integrals $Z_v(-)$ in the next section (\cref{sec:local zeta integrals}).

We emphasise that in \cref{prop:measure to integral}, $\varphi\subf$ is the finite part of each the $\varphi_{r,s,\alpha}$ (i.e.\ the $\varphi_{r,s,\alpha}$ differ \emph{only} at infinity). This will allow us to move both sums into the local zeta integral at infinity.


\section{Local zeta integrals}\label{sec:local zeta integrals}

Throughout this section we work locally at a place $v$ of $\Q$, and largely drop $v$ from notation.

\subsection{\texorpdfstring{$\GL_2$}{GL(2)} principal series}
Given $\Phi \in \cS(\Q_v^2,\C)$, continuous $\chi : \Q_v^\times \to \C^\times$, and $g \in \GL_2(\Q_v)$, we define a local Godement--Siegel section $f_\Phi(g;\chi,s)$, exactly as in \cref{sec:Eis-GL2} (but with the integral over $\Q_v^\times$ rather than $\A^\times$). We let $W_\Phi(g;\chi,s)$ be its Whittaker transform as in \cite[\S 8.1]{LPSZ19}, defined by analytic continuation of an integral convergent for $\Re(s) \gg 0$; the function $W_{\Phi}$ is entire, while $f_{\Phi}$ has the same poles as $\Phi(0, 0) \cdot L(\chi, 2s)$.
Note the maps $\Phi \to f_{\Phi}, \Phi \to W_\Phi$ have the equivariance property
\begin{align*}
f_{h\Phi}(g; \chi, s) &= |\det h|^{-s} f_{\Phi}(gh; \chi, s),\\
W_{h\Phi}(g; \chi, s) &= |\det h|^{-s} W_{\Phi}(gh; \chi, s).
\end{align*}

For a fixed $s$ where the (local) $L$-factor $L(\chi, 2s) \ne \infty$, the space of functions $f_{\Phi}(-; \chi, s)$ for varying $\Phi$ is exactly the induced representation $I\big(|\cdot|^{s-\tfrac{1}{2}}, |\cdot|^{\tfrac{1}{2}-s}\chi^{-1}\big)$, while the space of $W_{\Phi}(-; \chi, s)$ is the Whittaker model of this representation (with respect to $\psi^{-1}$, not $\psi$).

If $L(\chi, 2s) = \infty$, the $f_{\Phi}$ may have poles, and $I\big(|\cdot|^{s-\tfrac{1}{2}}, |\cdot|^{\tfrac{1}{2}-s}\chi^{-1}\big)$ is non-generic: it has a 1-dimensional subrepresentation spanned by the residues of the $f_{\Phi}$, and the $W_{\Phi}$ lie in the Whittaker model of $I\big( |\cdot|^{\tfrac{1}{2}-s}\chi^{-1}, |\cdot|^{s-\tfrac{1}{2}}\big)$ instead.

\subsection{A two-parameter zeta integral}\label{sec:two-parameter}

For \S\S\ref{sec:two-parameter} and \ref{sec:second zeta integral} only, let $\pi$ be any generic representation of $\GL_3(\Q_v)$, and let $\chi_1, \chi_2$ be two smooth characters of $\Q_v^\times$.

\begin{definition}\label{def:local zeta Z}
For $s_1, s_2$ complex numbers, $W \in \cW_{\psi}(\pi)$, and $\Phi \in \cS(\Q_v^2,\C)$, we set
\[Z(W, \Phi; \chi_1, \chi_2, s_1, s_2) = 
	\int_{(N_2 \backslash \GL_2)(\Q_v) } W(\iota(g, 1)) W_{\Phi}(g; \chi_2, s_2) \chi_1(\det g)|\det g|^{s_1 - \tfrac{1}{2}}\ \mathrm{d}g,
\]
which is convergent for $\Re(s_1) \gg 0$ (for fixed $s_2$) and has meromorphic continuation to all $s_1$ and $s_2$, as a rational function in $\ell^{(\pm s_1 \pm s_2)}$ if $v = \ell$ is a finite place.
\end{definition}

\begin{theorem}[Jacquet, Piatetski-Shapiro, Shalika] \label{thm:jpss}
	\begin{itemize}
\item[(i)] The function
\[
\widetilde{Z}(W, \Phi; \chi_1, \chi_2, s_1, s_2) \defeq \frac{Z(W, \Phi; \chi_1, \chi_2, s_1, s_2)}{L(\pi \times \chi_1, s_1 + s_2 - \tfrac{1}{2}) L(\pi \times \chi_1 \chi_2^{-1}, s_1 - s_2 + \tfrac{1}{2})}
\]
is entire as a function of the $s_i$, and is a polynomial in $\ell^{(\pm s_1 \pm s_2)}$ if $v = \ell$ is a finite place. 

\item[(ii)] The ideal generated by these functions for varying $(W, \Phi)$ is the unit ideal. In particular, if $v$ is a finite place, there exist finite collections $\{W_i\}_{i\in I}$ and $\{\Phi_i\}_{i\in I}$ defined over $\overline{\Q}$ such that $\sum_i\widetilde{Z}(W_i,\Phi_i; \chi_1,\chi_2,s_1,s_2) = 1$ for all $s_1,s_2$. 

\item[(iii)]If $\pi$ and the $\chi_i$ are unramified, and $W$ and $\Phi$ are the normalised spherical data, then $\widetilde{Z}(W,\Phi; \chi_1,\chi_2,s_1,s_2) = 1$ for all $s_1,s_2$.
\end{itemize}
\end{theorem}

Note that for $W_i$ and $\Phi_i$ as in the theorem, we have \[
\sum_i \widetilde{Z}(W_i,\Phi_i;\chi_1\theta_1,\chi_2\theta_2,s_1,s_2) = 1
\]
for all unramified characters $\theta_1,\theta_2$ (since we can move such $\theta_j$ into the $s_j$).

\begin{proof}
We know that, for each $s_2$, the functions $W_{\Phi}(-; \chi_2, s_2)$ for varying $\Phi$ form the Whittaker model of the representation $\Pi' = I(|\cdot|^{s_2 - 1/2}, |\cdot|^{1/2 - s_2} \chi_2^{-1})$; and evidently the $W(g)\chi_1(\det g)$ span the Whittaker model of $\pi \times \chi_1$. So, by the results of \cite{JPSS-RankinSelberg}, 
 the greatest common divisor of the $Z(-;\chi_1, \chi_2, s_1, s_2)$, as functions of $s_1$, is the $L$-factor
\[
L(\pi \times \Pi' \times \chi_1, s_1) = L(\pi \times \chi_1, s_1 + s_2 - \tfrac{1}{2}) L(\pi \times \chi_1\chi_2^{-1}, s_1 - s_2 + \tfrac{1}{2}),
\]
where the latter equality follows from the compatibility of Rankin--Selberg $L$-factors with parabolic induction (also proved in \emph{op.cit.}).

So, for each fixed $s_2$, the normalised integrals $\widetilde{Z}(-; \chi_1, \chi_2, s_1, s_2)$ generate the unit ideal. As these functions are meromorphic in $s_1 \pm s_2$, they in fact generate the unit ideal in $\C[\ell^{\pm s_1 \pm s_2}]$.
\end{proof}

Note that
\[ Z(W, \Phi; \chi_1, \chi_2, s_1, 1-s_2) = Z(W, \hat\Phi; \chi_1 \chi_2^{-1}, \chi_2^{-1}, s_1, s_2). \]
This is immediate from the functional equation of $W_{\Phi}$, cf.~\cite[Eq. (8.1)]{LPSZ19}. The denominator in the definition of $\widetilde{Z}$ is the same for both sides (the factors get swapped) so this relation also holds for $\widetilde{Z}$ in place of $Z$. There is also a functional equation in $s_1$ (for fixed $s_2$), but this is more complicated to state, and we shall not use it here.

\subsection{A second zeta-integral}\label{sec:second zeta integral}

We consider a second zeta-integral, studied in \cite{LSZ-unitary} (following a series of earlier works). In this section, we shall assume $v$ is a finite place.

\begin{definition}\label{def:local zeta Y}
For $(\pi, \chi_1, \chi_2, W, \Phi)$ as before, we define
\begin{multline*}
	Y(W, \Phi; \chi_1, \chi_2, s_1, s_2) = \\
	\int_{(N_2 \backslash \GL_2)(\Q_v)}
	W\left[ \smallthreemat{1}{}{}{}{}{1}{}{-1}{} \iota(g, 1) \right] f_{\Phi}(g; \chi_2, s_2) |\det g|^{s_1 - \tfrac{1}{2}}\chi_1(\det g)\, \mathrm{d}g.
\end{multline*}
\end{definition}

This integral converges for $\Re(s_2) \gg 0$ for any fixed $s_1$ [sic!] and extends to a meromorphic function of $s_1$ and $s_2$, which is a a rational function in $\ell^{\pm s_1 \pm s_2}$ if $v = \ell$ is finite.

\subsubsection{Relation to the $Z$ integral}  We will prove the following theorem in an appendix below.

\begin{theorem}
\label{thm:lfactor}
We have the identity
\[ Y(W, \Phi; \chi_1, \chi_2, s_1, s_2) = \gamma(\pi \times \chi_1/\chi_2, s_1 - s_2 + \tfrac{1}{2})\cdot  Z(W, \Phi; \chi_1, \chi_2, s_1, s_2), \]
where
\[ \gamma(\pi, s) = \frac{\varepsilon(\pi, s) L(\pi^\vee, 1-s)}{L(\pi, s)}\]
is the local $\gamma$-factor. In particular, the greatest common divisor of the $Y(-)$ as $(W, \Phi)$ vary is the principal ideal generated by the product of $L$-factors
\[
L(\pi \times \chi_1, s_1 + s_2 - \tfrac{1}{2}) \cdot L(\pi^\vee \times \chi_1^{-1} \chi_2, s_2 - s_1 + \tfrac{1}{2}).
\]
\end{theorem}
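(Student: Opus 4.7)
The plan is to prove the identity $Y = \gamma \cdot Z$ by combining uniqueness of Rankin--Selberg functionals with an explicit computation on unramified data; the second (gcd) statement will then follow directly from \cref{thm:jpss}, since $\gamma(\pi\times\chi_1/\chi_2, s_1-s_2+\tfrac12)\cdot L(\pi\times\chi_1/\chi_2, s_1-s_2+\tfrac12)$ equals $\varepsilon \cdot L(\pi^\vee\times\chi_1^{-1}\chi_2, s_2-s_1+\tfrac12)$ with $\varepsilon$ a unit in $\mathbb{C}[\ell^{\pm s_1 \pm s_2}]$.

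First I verify equivariance. Let $\sigma := I(|\cdot|^{s_2-1/2}, |\cdot|^{1/2-s_2}\chi_2^{-1})$, so that $\Phi\mapsto f_\Phi(\cdot;\chi_2,s_2)$ surjects onto $\sigma$ and $\Phi\mapsto W_\Phi$ onto its Whittaker model. A change of variable $g\mapsto gh^{-1}$ in either integral shows that, as bilinear forms in $(W,f)\in \cW_\psi(\pi)\otimes\sigma$, both $Y$ and $Z$ satisfy
\[
F\bigl(\pi(\iota(h,1))W,\; \sigma(h) f\bigr) \;=\; \chi_1^{-1}(\det h)\, |\det h|^{1/2-s_1}\, F(W,f)
\]
for all $h \in \GL_2(\QQ_v)$. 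By uniqueness of $\GL_3\times\GL_2$ Rankin--Selberg functionals \cite{JPSS-RankinSelberg}, applied for generic $s_2$ (where $\sigma$ is irreducible), such a form is unique up to scalar. Hence $Y = c(s_1,s_2)\cdot Z$ for a meromorphic function $c$ independent of $(W,\Phi)$ and rational in $\ell^{\pm s_i}$ at finite places.

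Next, I identify $c$ by specializing to unramified data at a finite place $v$: take $\pi,\chi_1,\chi_2$ unramified, $\Phi = \mathbf{1}_{\ZZ_v^2}$, and $W$ the normalized spherical Whittaker function. Then $Z$ evaluates to $L(\pi\times\chi_1, s_1+s_2-\tfrac12)\,L(\pi\times\chi_1/\chi_2, s_1-s_2+\tfrac12)$ by the standard computation. For $Y$, I parameterize the big Bruhat cell of $N_2\backslash\GL_2$ by $g = \mathrm{diag}(a,d)\, w_2\, n(x)$; a direct matrix calculation gives $\smallthreemat{1}{}{}{}{}{1}{}{-1}{}\cdot\iota(g,1) = \mathrm{diag}(a,1,d)\cdot L(x)\cdot w_3$, where $L(x) = \smallthreemat{1}{0}{0}{0}{1}{0}{x}{0}{1}$ and $w_3$ is the long Weyl element of $\GL_3$. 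The matrix $L(x)$ lies in $\GL_3(\ZZ_v)$ for $|x|_v\le 1$, and for $|x|_v>1$ admits an Iwasawa decomposition $L(x) = \mathrm{diag}(x^{-1},1,x)\cdot u(x)\cdot k$ with $u(x)\in N_3$ and $k\in \GL_3(\ZZ_v)$; combining the Whittaker property (with $\psi_N(u(x))=1$, as $u(x)$ only affects the $(1,3)$ entry) with right $K_3$-invariance, $W[\mathrm{diag}(a,1,d)L(x)]$ reduces to a Shintani value at a diagonal torus element depending on $v(x)$. Carrying out the three remaining integrations in $(a,d,x)$ and re-summing, $Y$ factors as $L(\pi\times\chi_1, s_1+s_2-\tfrac12)\cdot L(\pi^\vee\times\chi_1^{-1}\chi_2, s_2-s_1+\tfrac12)$; the ratio with $Z$ is exactly $\gamma(\pi\times\chi_1/\chi_2, s_1-s_2+\tfrac12)$ (with $\varepsilon = 1$ in the unramified case), identifying $c$ and hence the identity for arbitrary test data by the uniqueness of the previous paragraph, with meromorphic continuation in $\pi$ handling the general (ramified) case.

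The main obstacle will be the final re-summation in the unramified calculation of $Y$: combining Shintani's alternating-sum formula over the Weyl group of $\GL_3$ with the Tate-type sum in $x$ and the lattice sum in $(a,d)$ and collecting terms to recognize the result as the claimed Euler product. A cleaner alternative route, should that computation prove unwieldy, is to reduce by deformation in the $L$-parameter to the case where $\pi$ is a generic unramified principal series of $\GL_3$: by induction in stages, both $Y$ and $Z$ then decompose into products of three classical $\GL_1$ Tate zeta integrals against the inducing characters, and $Y = \gamma \cdot Z$ emerges directly from the Tate local functional equation applied to the factor indexed by $\chi_1/\chi_2$, bypassing the Shintani sum entirely.
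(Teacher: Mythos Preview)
Your multiplicity-one reduction is fine: both $Y$ and $Z$ define elements of $\operatorname{Hom}_{\GL_2}(\pi\otimes\sigma\otimes\chi_1|\det|^{s_1-1/2},\mathbb{C})$ for $\sigma$ the principal series at $s_2$, and for generic $s_2$ this space is one-dimensional, so $Y=c(s_1,s_2)\cdot Z$ with $c$ rational in $\ell^{\pm s_i}$. The difficulty is the identification $c=\gamma$. Your computation on spherical data determines $c$ only when $\pi,\chi_1,\chi_2$ are all \emph{unramified}; the phrase ``meromorphic continuation in $\pi$ handling the general (ramified) case'' is not a well-defined operation. Over a fixed non-archimedean field the category of smooth $\GL_3$-representations decomposes into disconnected Bernstein components, and a supercuspidal $\pi$ (or one with supercuspidal support on a proper Levi) cannot be deformed to an unramified principal series. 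So the unramified calculation gives you no information whatsoever about $c$ for such $\pi$. Your alternative route has the same defect: ``deformation in the $L$-parameter to generic unramified principal series'' is again unavailable outside the unramified Bernstein block.

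The paper's argument is quite different and avoids this obstruction entirely. Rather than invoking multiplicity one, it manipulates the integrals directly: via the torus-integral formula \eqref{eq:Y and y}, $Y$ is written as a pairing of $f_\Phi$ against a function $y(W;\dots)$, and the inner integral defining $y$ is recognised as the Jacquet--Piatetski-Shapiro--Shalika $\GL_3\times\GL_1$ integral $\Psi_0$ applied to a translate of $\widetilde{W}$. The JPSS local functional equation $\Psi_0\leftrightarrow\Psi_1$ (valid for \emph{all} generic $\pi$) then introduces exactly the factor $\gamma(\pi\times\chi_2^{-1},s_1-s_2+\tfrac12)$; unfolding $\Psi_1$ and the pairing with $f_\Phi$ back into a single $\GL_2$-integral, the $N_2$-integration collapses $f_\Phi$ to its Whittaker transform $W_\Phi$, yielding $Z$. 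This works uniformly for arbitrary $\pi$ without any case analysis or deformation. If you want to salvage your approach, you would need to supply a direct computation of $c$ for supercuspidal $\pi$ as well (e.g.\ by showing $c$ is multiplicative under parabolic induction and agrees with $\gamma$ on supercuspidals), which is substantially more work than the paper's route.
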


\subsubsection{Torus integrals} We now give an alternative, more ``computable'' formula for $Y(-)$.

\begin{definition}\label{def:y}
For $W \in \cW(\pi)$, define a function on $\GL_2(\Q_v)$ by
\begin{multline*}
	y(W; \chi_1, \chi_2, s_1, s_2)(g) = \chi_1(\det g)|\det g|^{s_1 - \tfrac{1}{2}} \times \\
	\int_{(\Q_v^\times)^2} W\left[\sdthree{x}{1}{y^{-1}}\smallthreemat{1}{}{} {}{}{1} {}{-1}{}\iota(g, 1)\right] \cdot |x|^{s_1 + s_2 - \tfrac{3}{2}} \chi_1(x)\cdot  |y|^{s_2 - s_1-\tfrac{1}{2}} \tfrac{\chi_2}{\chi_1}(y) \ \mathrm{d}^\times x\,  \mathrm{d}^\times y.\end{multline*}
\end{definition}

The integral converges for $\Re(s_2) \gg 0$ (for fixed $s_1$), as before; more precisely, it converges in some quadrant $\Re(s_2 - s_1) > C, \Re(s_2 + s_1) > C$. A computation shows that it transforms as an element of $I(|\cdot|^{1/2-s_2}, |\cdot|^{s_2 - 1/2} \chi_2)$, which is the dual space of the representation in which $f_{\Phi}(-;\chi_2, s_2)$ lies, and the duality pairing recovers $Y$: that is, we have
\begin{equation}\label{eq:Y and y}
Y(W, \Phi; \chi_1, \chi_2, s_1, s_2) = \left\langle y(W; \chi_1, \chi_2, s_1, s_2), f_{\Phi}(-;\chi_2, s_2)\right\rangle.
\end{equation}

\subsection{Parahoric level test data}\label{sec:test data at p}

We now take $\pi = \Pi_p$, and evaluate (in \cref{thm:local zeta p}) the local zeta-integral directly for certain specific test data; the integral we consider here is a local factor of the global integral in \cref{prop:measure to integral}). We suppose that $v$ is a finite place, and denote it by $p$ (for compatibility with our applications below).

Suppose $\Pi$ has an unramified $P_1$-refinement $\sigma_p$, so there is an irreducible $\GL_2$-representation $\sigma_p'$ such that $\sigma_p \times \sigma_p' \hookrightarrow J_{P_1}(\Pi_p)$. Recall $\alpha_p = \sigma_p(p)$, and from \cref{prop:P_1 conductor} that $r(\Pi,\alpha_p)$ is the conductor of $\sigma_p'$. 

Henceforth, we fix the following characters and test data.  

\begin{notation}\label{not:test data p} \
\begin{itemize}\setlength{\itemsep}{0pt}
	\item Let  $\chi_1 = \etahat_{1, p}$, where $\eta_1$ is a Dirichlet character of conductor $p^{n_1}$ for $n_1 \in \Z_{\geq 0}$ (so that $\chi_1(p) = 1$ and $\chi_1|_{\Zp^\times} = \eta_1^{-1}$). Let $n = \max(1,n_1) \ge 1$.
	\item Let $\chi_2 = \omegahat_{\Pi, p} \etahat_{1, p}\etahat_{2, p}$, where $\eta_2$ is the auxiliary Dirichlet character from \cref{not:eta_2}.
	\item  Let $W^{\alpha} \in \Pi_p$ be the $P_1$-stabilised newvector of $\Poref$ as defined in \cref{sect:refnewforms}, so that $W^{\alpha}$ is stabilized by $\cU_{1,p}^{(P_1)}(p^{r(\Pi,\alpha_p)})$ and lies in the $U_{p,1} = p^{a+1}\alpha_p$ eigenspace.
	\item Let $u = \smallthreemat{1}{}{1}{}{1}{}{}{}{1}\smallthreemat{1}{}{}{}{}{-1}{}{1}{} \in \GL_3(\Zp)$ as before, and take for $W$ the element $u\tau_{ 1}^n  \cdot W^{\alpha}$

	\item Let $\Phi$ be the characteristic function $\Phi_R = \operatorname{ch}( (0, 1) + p^R \Zp^2)$, for $R = \mathrm{max}(n,r(\Pi,\alpha_p))$.
\end{itemize}
To ease notation, we will drop the $\widehat{\cdot}$ from local characters, writing for example $\eta_{i,p}$ for $\widehat{\eta}_{i,p}$.
\end{notation}

Recall the Coates--Perrin-Riou factor  $e_p(\Pi_p \times \eta_{1,p}, -j)$  from \cref{def:e_p}, and its explicit description in our present setting from \cref{ex:unramified P1 refinement}.
\begin{theorem}\label{thm:local zeta p}
We have
\begin{multline*}
	Y\Big(W, \Phi_R; \chi_1, \chi_2, \tfrac{1-j}{2}, \tfrac{-j}{2}\Big) = \\
	\frac{\alpha_p^n}{p^{2R + n}(1 - p^{-1})(1 - p^{-2})} \cdot e_p(\Pi_p \times \eta_{1,p}, -j) \cdot \cE_0 \cdot L(\Pi_p^\vee \times \omega_{\Pi,p}\eta_{2, p}, 0),
\end{multline*}
where $\cE_0$ is an explicit $p$-adic unit. It follows that
\begin{multline*}
	Z\Big(W, \Phi_R; \chi_1, \chi_2, \tfrac{1-j}{2}, \tfrac{-j}{2}\Big) = \\
	\frac{\alpha_p^n }{p^{2R + n}(1 - p^{-1})(1 - p^{-2})} \cdot e_p(\Pi_p \times \eta_{1,p}, -j) \cdot \frac{\cE_0 \cdot L(\Pi_p\times \omega_{\Pi,p}^{-1}\eta_{2, p}^{-1}, 1)}{\varepsilon(\Pi_p\times \omega_{\Pi,p}^{-1}\eta_{2, p}^{-1},1)}.
\end{multline*}
\end{theorem}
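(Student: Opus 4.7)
The plan is to first compute $Y$ directly using the torus-integral presentation of \eqref{eq:Y and y}, and then deduce the $Z$-formula from \cref{thm:lfactor}.

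For the first step, by \eqref{eq:Y and y} we have
\[
 Y\big(W, \Phi_R; \chi_1, \chi_2, s_1, s_2\big) = \big\langle y(W; \chi_1, \chi_2, s_1, s_2),\ f_{\Phi_R}(-; \chi_2, s_2)\big\rangle,
\]
so the heart of the argument is an explicit evaluation of $y(W; \chi_1, \chi_2, s_1, s_2)$ for $W = u\tau_1^n \cdot W^{\alpha}$. Using the equivariance $W(\iota(g,1)) = (u\tau_1^n)\cdot W^\alpha(\iota(g,1))$ and unwinding the left-translation by $\sdthree{x}{1}{y^{-1}}\smallthreemat{1}{}{}{}{}{1}{}{-1}{}$, one expresses the integrand in terms of values of $W^{\alpha}$ on the diagonal torus of $\GL_3$, after reabsorbing the Weyl element and the matrix $u\tau_1^n$ via an Iwasawa/Bruhat decomposition for $\GL_2$. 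Here one exploits the fact that $W^{\alpha}$ is fixed on the right by $\cU^{(P_1)}_{1,p}(p^r)$, so the integration in $g \in (N_2\backslash\GL_2)(\Q_p)$ localises to an Iwahori-type open set where the support condition imposed by $\Phi_R$ is transparent. One then invokes \cref{prop:torusWh} to replace torus values of $W^{\alpha}$ with values of $W_\sigma^{\mathrm{new}}$ against explicit powers of $\alpha_p(p)$.

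After this replacement, the remaining integral factorises as (i) a one-variable integral in $x \in \Q_p^\times$ against $\chi_1 = \widehat{\eta}_{1,p}$, which produces the Gauss-sum factor $G(\eta_1^{-1})$ when $\eta_1$ is ramified (and the corresponding Euler-type factor $(1-p^{-j-1}\alpha_p^{-1})/(1-p^j \alpha_p)$ when $\eta_1=1$) together with the power $(p^{j+1}\alpha_p(p))^{-n}$; this is precisely $e_p(\tilde\Pi, \eta_1, -j)$; and (ii) a $\GL_2$ Rankin--Selberg-type integral of $W_\sigma^{\mathrm{new}}$ against $f_{\Phi_R}$. The latter is a standard computation (cf.~Proposition 3 of \cite{JPSS-RankinSelberg}) and evaluates to $L(\sigma_p \times \alpha_p \eta_{2,p}, \tfrac{1}{2}) \cdot L(\omega_{\sigma_p}\eta_{2,p}, 0)$, up to an explicit $p$-adic unit coming from volumes of parahorics and from the normalisation of $W_\sigma^{\mathrm{new}}$. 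Because $\wedge^2\Pi_p \simeq \Pi_p^\vee\otimes \omega_{\Pi,p}$ decomposes as $(\sigma_p \otimes \alpha_p) \oplus \det(\sigma_p)$, this product is exactly $L(\wedge^2\Pi_p \times \eta_{2,p}, 0)$. Absorbing the remaining constants (unramified Haar volumes, the normalisation factor $p^{an}$ in \cref{def:zaj}, and the action of $\tau_1^n$ on the Schwartz data) into a $p$-adic unit $\mathcal{E}_0$ yields the claimed formula for $Y$.

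For the second equality, apply \cref{thm:lfactor} with $(s_1, s_2) = (\tfrac{1-j}{2}, -\tfrac{j}{2})$, so that $s_1 - s_2 + \tfrac{1}{2} = 1$ and $\chi_1/\chi_2 = (\omega_{\Pi,p}\eta_{2,p})^{-1}$. The identity $\Pi_p^\vee \otimes \omega_{\Pi,p} \simeq \wedge^2 \Pi_p$ gives
\[
 \gamma(\Pi_p \times \omega_{\Pi,p}^{-1}\eta_{2,p}^{-1}, 1) = \frac{\varepsilon(\Pi_p \times \omega_{\Pi,p}^{-1}\eta_{2,p}^{-1}, 1) \cdot L(\wedge^2\Pi_p \times \eta_{2,p}, 0)}{L(\Pi_p \times \omega_{\Pi,p}^{-1}\eta_{2,p}^{-1}, 1)},
\]
and dividing the formula for $Y$ by this factor yields the stated expression for $Z$.

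The main obstacle is the explicit unfolding in step~(i)--(ii) above: one has to track how $u\tau_1^n$ twists the Iwasawa decomposition in a way that precisely produces the shifts $(p^{m+n}, p^n, 1)$ needed by \cref{prop:torusWh}, and simultaneously constrains the support of the integrand against $\Phi_R$ so that the $x$-integral collapses to a Gauss sum rather than a more complicated object. Verifying that the leftover constants assemble into a $p$-adic unit (and identifying the precise power of $p$ claimed) is the most delicate bookkeeping.
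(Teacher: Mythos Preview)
Your overall strategy matches the paper's: compute $Y$ via the torus-integral formula \eqref{eq:Y and y} and then deduce $Z$ from \cref{thm:lfactor}. The deduction of $Z$ from $Y$ in your last paragraph is exactly right.

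However, the middle of your argument misidentifies several factors. First, the pairing with $f_{\Phi_R}$ is much simpler than you suggest: $f_{\Phi_R}(-;\chi_2,s_2)$ is supported on $B_2(\Qp)K_0(p^R)$, so the pairing just evaluates $y(W;\ldots)$ at the identity, up to an explicit volume $\tfrac{1}{p^{2R}(1-p^{-2})}$. There is no genuine ``$\GL_2$ Rankin--Selberg integral against $f_{\Phi_R}$'' left over. Second, after inserting \cref{prop:torusWh}, the torus integral separates into an $x$-integral (giving the Gauss-sum/Euler factor, i.e.\ $e_p(\tilde\Pi,\eta_1,-j)$ as you say) and a single geometric sum in $b$, which yields only $L(\sigma_p\times\alpha_p,\, s_2-s_1+\tfrac12)=L(\sigma_p\times\alpha_p,0)$ --- not the product $L(\sigma_p\times\alpha_p\eta_{2,p},\tfrac12)\cdot L(\omega_{\sigma_p}\eta_{2,p},0)$ you claim. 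In particular, the full factor $L(\wedge^2\Pi_p\times\eta_{2,p},0)$ does \emph{not} fall out of the zeta integral directly.

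Consequently your description of $\cE_0$ is wrong: it is not a leftover constant assembled from Haar volumes, the $p^{an}$ in \cref{def:zaj} (which is a global normalisation irrelevant to this purely local computation), or the action of $\tau_1^n$ on Schwartz data. Rather, $\cE_0$ is \emph{defined} as the ratio $L(\sigma_p\times\alpha_p,0)/L(\wedge^2\Pi_p\times\eta_{2,p},0)$, and one checks separately (using the decomposition you mention, valid in the irreducibly-induced case, plus a case check when the induction is reducible) that this ratio is a $p$-adic unit under $P_1$-ordinarity. This is the content of the paper's \cref{lem:E_0 unit}.
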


The factor $\alpha_p^n$, and the first denominator term (which has an interpretation as the index of a subgroup of $H(\Zp)$), will naturally cancel out in our $p$-adic interpolation computations. The local $L$-factor will later contribute to the period $\Omega_\Pi^-$ (see \eqref{eq:Omega}).

\begin{proof}
The statement for $Z$ follows immediately from the statement for $Y$ and \cref{thm:lfactor}.

We focus, then, on $Y$. To save notation we give the proof supposing $\eta_{2, p}$ is trivial; we indicate the necessary modifications for general $\eta_{2,p}$ in \cref{rem:non-trivial eta}.
Since $R \ge 1$, one computes that $f_{\Phi_R}(-, \chi_2, s_2)$ has support in $B_2(\Qp) K_0(p^R)$, whose measure in the quotient $B_2(\Qp) \backslash \GL_2(\Qp)$ is $\tfrac{1}{p^R(1+1/p)}$; and its value at the identity is $\tfrac{1}{p^R(1-1/p)}$. Hence for all sufficiently large $R$ (depending on $W$) we have
\[ Y(W, \Phi_R; \chi_1, \chi_2, s_1, s_2) = \tfrac{1}{p^{2R}(1-p^{-2})} y(W; \chi_1, \chi_2, s_1, s_2)(1).\]
Thus the Whittaker function in the integral $y\left(W; \chi_1, \chi_2, s_1, s_2\right)(1)$ is given by
\[ W^{\alpha}\left(\sdthree{x}{1}{y^{-1}}\smallthreemat{1}{}{}{}{}{1}{}{-1}{}u \tau_{1}^n\right) =
\psi(x) \omega_\Pi(y)^{-1} W^{\alpha}
\left(\smallthreemat{p^n xy}{}{}{}{y}{}{}{}{1}\right).
\]
Hence the integral is given by
\[
\sum_{(a, b) \in \Z^2} W^{\alpha}\left(\sdthree{p^{n+a+b}}{p^b}{1}\right) p^{-a(s_1 + s_2 - \tfrac{3}{2})}  p^{-b(s_2 - s_1-\tfrac{1}{2})} \int_{\Zp^\times}\psi(p^a x) \chi_1(x) \ \mathrm{d}^\times x.
\]
The values of the Whittaker function $W^{\alpha}$ along the torus are given by \cref{prop:torusWh}; they vanish unless $a \geq -n$ and $b \geq 0$. The integral over $\Zp^\times$ is standard: if $\eta_{1,p}$ is ramified, then it is $G(\eta_{1,p}^{-1}) / p^n(1-p^{-1})$ when $a = -n$, and vanishes otherwise; if $\eta_{1,p}$ is unramified (so $n = 1$), then it vanishes for $a < -1$, takes the value $\tfrac{-1}{p-1}$ for $a = -1$, and is 1 for $a \ge 0$.

For non-trivial $\eta_{1,p}$, we thus compute that $ Y(W, \Phi_R, \chi_1, \chi_2, s_1, s_2)$ is equal to
\begin{align*}
	&= \frac{G(\eta_{1,p}^{-1})}{p^{2R+n}(1-p^{-1})(1-p^{-2})} p^{n(s_1 + s_2 - \tfrac{3}{2})} \sum_{b \ge 0} W^{\alpha}\left(\sdthree{p^b}{p^b}{1}\right) p^{-b(s_2 - s_1-\tfrac{1}{2})}.
	\end{align*}
	Using \cref{prop:torusWh}, the sum is
	\begin{align*}
		\sum_{b \ge 0} W^{\alpha}\left(\sdthree{p^b}{p^b}{1}\right) p^{-b(s_2 - s_1-\tfrac{1}{2})} &=  \sum_{b \ge 0} W^{\mathrm{new}}_{\sigma_p'}\left(\matrd{p^b}{}{}{1}\right) p^{-b(s_2 - s_1)}\alpha_p^b\\
		&= \int_{\Qp^\times}W_{\sigma_p'}^{\mathrm{new}}\matrd{x}{}{}{1}\sigma_p(x)|x|^{s_2-s_1}d^\times x\\
		& = L(\sigma_p'\times\sigma_p, s_2-s_1+\tfrac{1}{2}),
		\end{align*}
	recalling $\sigma_p \times \sigma_p'$ is our $P_1$-refinement; the last equality is \cite[Prop.\ 6.17]{Gel75}. Thus
\[
Y(W, \Phi_R; \chi_1, \chi_2, s_1, s_2) = \frac{G(\eta_{1,p}^{-1}) p^{n(s_1 + s_2 -\tfrac{3}{2})}}{p^{2R+n}(1-p^{-1})(1-p^{-2})} \cdot L(\sigma_p' \times \sigma_p,s_2 - s_1 + \tfrac{1}{2}).
\]
If $\eta_{1,p}$ is trivial, we obtain instead
\[
\frac{\alpha_p}{p^{2R+1}(1 - p^{-1})(1 - p^{-2})} \cdot \frac{(1 - \alpha_p^{-1}p^{-(\tfrac{3}{2}-s_1-s_2)})}{(1 - \alpha_p p^{-(s_1 + s_2 - \tfrac{1}{2})})} \cdot L(\sigma_p' \times \sigma_p, s_2 - s_1 + \tfrac{1}{2}).
\]
We see that when $s_1 = \tfrac{1-j}{2}$ and $s_2 = -\tfrac{j}{2}$,
then in either case we obtain
\[
Y\big(W, \Phi_R; \chi_1, \chi_2, \tfrac{1-j}{2}, -\tfrac{j}{2}\big) = \frac{\alpha_p^n}{p^{2R+n}(1-p^{-1})(1-p^{-2})} \cdot e_p(\Pi_p \times \eta_{1,p}, -j) \cdot L(\sigma_p' \times \sigma_p,0).
\]
To remove the $L$-factor $L(\sigma_p'\times\sigma_p,0)$, let
\begin{equation}\label{eq:E_s}
	\cE_s = \frac{L(\sigma_p' \times \sigma_p,s)}{L(\Pi_p^\vee \times \omega_{\Pi,p}, s)}.
\end{equation}
The statement for $Y$ in the theorem follows by replacing $L(\sigma_p'\times\sigma_p,0)$ with $\cE_0\cdot L(\Pi_p^\vee \times \omega_{\Pi,p}, 0)$. It only remains to show $\cE_0$ is well-defined and a $p$-adic unit, which we do in \cref{lem:E_0 unit} below.
\end{proof}

\begin{lemma}\label{lem:E_0 unit}
If $\Pi_p$ is irreducibly induced from a representation $\alpha \times \sigma$ of $\GL_1 \times \GL_2$, then we have
\[
\frac{L(\sigma_p' \times \sigma_p, s)}{L(\Pi_p^\vee \times \omega_{\Pi,p}, s)} = \frac{1}{L(\omegahat_{\Pi, p} \sigma_p^{-1} , s)} = 1-\omega_{\Pi}(p)\alpha_p^{-1}p^{-s}.
\]
If the induction of $\sigma_p \times \sigma_p'$ is reducible, then this ratio is identically 1.

In particular, if $\Poref$ is $P_1$-ordinary, then $\cE_0$ is a $p$-adic unit.
\end{lemma}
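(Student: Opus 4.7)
My plan is to apply the compatibility of local Rankin--Selberg $L$-factors with parabolic induction, due to Jacquet--Piatetski-Shapiro--Shalika (which was also used in \cref{thm:jpss}). First I would note that since $\Pi_p$ is the unique generic constituent of $\operatorname{Ind}_{P_1}^{\GL_3}(\alpha_p \boxtimes \sigma_p)$, its contragredient $\Pi_p^\vee$ is the unique generic constituent of $\operatorname{Ind}_{P_1}^{\GL_3}(\alpha_p^{-1} \boxtimes \sigma_p^\vee)$. In the irreducible case the induction equals $\Pi_p^\vee$, and the compatibility of Rankin--Selberg $L$-factors along the parabolic $P_1$ gives
\[
L(\Pi_p^\vee \times \omegahat_{\Pi, p}, s) \;=\; L(\alpha_p^{-1}\omegahat_{\Pi, p}, s) \cdot L(\sigma_p^\vee \times \omegahat_{\Pi, p}, s).
\]

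Next I would simplify the second factor using the $\GL_2$-identity $\sigma_p^\vee \cong \sigma_p \otimes \omega_{\sigma_p}^{-1}$ together with the central character relation $\omegahat_{\Pi, p} = \alpha_p \cdot \omega_{\sigma_p}$:
\[
L(\sigma_p^\vee \times \omegahat_{\Pi, p}, s) \;=\; L\big((\sigma_p^\vee \otimes \omega_{\sigma_p}) \otimes \alpha_p, s\big) \;=\; L(\sigma_p \times \alpha_p, s).
\]
Substituting yields the first equality in the lemma. The subsequent equality to $1 - \omega_\Pi(p)\alpha_p^{-1}(p)$ is then just the evaluation $L(\chi,s)^{-1} = 1 - \chi(p)p^{-s}$ at $s=0$ for the unramified character $\chi = \omegahat_{\Pi, p}\alpha_p^{-1}$; if instead this character is ramified, both sides equal $1$ under the standard convention that $\chi(p) = 0$ for ramified $\chi$.

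For the reducible case, I would run a short case-by-case check against the classification in \cref{ex:langlands}: the induction $\operatorname{Ind}_{P_1}^{\GL_3}(\alpha_p \boxtimes \sigma_p)$ with $\sigma_p$ generic is reducible only in the Steinberg-type configurations, in which $\Pi_p^\vee$ is a proper generic subquotient whose Rankin--Selberg $L$-factor omits exactly the $L(\alpha_p^{-1}\omegahat_{\Pi, p}, s)$ piece appearing in the factorisation above. Hence the ratio in question is identically $1$ in these cases, as claimed.

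Finally, for the $p$-adic unit conclusion: $P_1$-ordinarity forces $\alpha_p$ to be unramified with $v_p(\alpha_p(p)) = -1-a$, so $v_p(\alpha_p^{-1}(p)) = 1+a \geq 1$; and since $\omega_\Pi$ is of finite order we have $v_p(\omega_\Pi(p)) = 0$ when $\omegahat_{\Pi, p}\alpha_p^{-1}$ is unramified (the ramified case having already reduced to the trivial ratio $1$). Therefore $v_p(\omega_\Pi(p)\alpha_p^{-1}(p)) \geq 1$, so $1 - \omega_\Pi(p)\alpha_p^{-1}(p) \equiv 1 \pmod{\mathfrak{m}_L}$ is a unit. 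The main obstacle is the reducible case: one has to verify directly from the classification that the generic subquotient's $L$-factor loses precisely the factor $L(\alpha_p^{-1}\omegahat_{\Pi, p},s)$, with no further $p$-adic contributions that could spoil the unit conclusion.
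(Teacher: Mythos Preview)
Your proof is correct. The paper takes a slightly different route: it first invokes the $\GL_3$ identity $L(\Pi_p^\vee \times \omega_{\Pi,p}, s) = L(\wedge^2 \Pi_p, s)$, and then factorises the exterior square as $L(\wedge^2 \Pi_p, s) = L(\sigma_p \times \alpha_p, s)\,L(\wedge^2 \sigma_p, s)$ with $\wedge^2 \sigma_p = \omega_{\sigma_p} = \omegahat_{\Pi,p}\alpha_p^{-1}$. Your approach bypasses the exterior square entirely, going directly through the Rankin--Selberg factorisation for $\Pi_p^\vee$ and the $\GL_2$ self-duality $\sigma_p^\vee \cong \sigma_p \otimes \omega_{\sigma_p}^{-1}$; this is marginally more elementary and arguably cleaner. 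The treatment of the reducible case (case-by-case via the Langlands parameter) and the $p$-adic unit conclusion are identical in both.
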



\begin{proof}
	For $\GL_3$, we have $L(\Pi_p^\vee\times \omega_{\Pi,p},s) = L(\wedge^2 \Pi_p , s)$ (see e.g.\ \cite[\S1]{Kim03}). If $\Pi_p$ is irreducibly induced, then we have $L(\wedge^2 \Pi_p, s) = L(\sigma_p' \times \sigma_p, s) L(\wedge^2 \sigma_p', s)$, and we have $\wedge^2 \sigma_p' = \omega_{\sigma_p'} = \widehat{\omega}_{\Pi,p} \sigma_p^{-1}$; and $\widehat{\omega}_{\Pi,p}(p)\sigma_p^{-1}(p) = \omega_{\Pi}(p)\alpha_p^{-1}$. In the remaining cases, one can write down the Langlands parameter as in \cref{ex:langlands} and argue similarly to find that the ratio is 1.

 For the final claim, we note that $\cE_0$ is either 1 or $1 -  \omega_{\Pi}(p)\alpha_p^{-1}$. As $\alpha_p^{-1}$ has valuation $1 + a > 0$ with respect to our choice of embedding into $\overline{\Q}_p$, whilst $\omega_{\Pi}(p)$ is a root of unity, we see $\cE_0$ must be a $p$-adic unit.
\end{proof}

\begin{remark}\label{rem:non-trivial eta}
	We indicate the changes required if $\eta_{2,p}$ is not trivial. In the sum over $(a,b)\in\Z^2$, we must add $\eta_{2,p}(p^b)$; and in the expression for $Y$, the $L$-value that appears is $L(\sigma_p'\times\sigma_p\eta_{2,p},s_2-s_1+1/2)$. Then $\cE_s = L(\sigma_p'\times\sigma_p\eta_{2,p},s)/L(\Pi_p^\vee\times\omega_{\Pi,p}\eta_{2,p},s)$. In \cref{lem:E_0 unit} we find the ratio is either 1 or $1-\omega_\pi(p)\eta_2(p)\alpha_p^{-1}p^{-s}$, and conclude as before, since $\eta_2(p)$ is a root of unity.
\end{remark}

\subsection{Local zeta integrals at infinity}
Now consider $v=\infty$. The following is directly analogous to the discussion after \cite[Lem.\ 1.1]{Mah00}.  Recall $\Phi_\infty^{j+2}$ from \eqref{eq:schwartz infinity}, and let $W_\infty \in \cW_\psi(\pi_\infty)$.

\begin{lemma}\emph{\cite{JPSS-L-factors}.} \label{lem:local zeta infty}
There exists a polynomial $P_j(W_\infty; T) \in \C[T]$ such that
\begin{align*}
	Z(W_\infty,\Phi_\infty^{j+2}; &\chi_1,\chi_2,s,-\tfrac{j}{2})  \\
	&= P_j(W_\infty; s +\tfrac{j}{2}) \cdot  L(\pi \times \chi_1, s - \tfrac{j + 1}{2})\cdot  L(\pi \times \chi_1\chi_2^{-1}, s+\tfrac{j + 1}{2}).
\end{align*}
\end{lemma}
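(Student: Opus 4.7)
The lemma is the Archimedean analogue of \cref{thm:jpss}, specialised to the Schwartz function $\Phi_\infty^{j+2}$. My plan is to deduce it from the general Archimedean Rankin--Selberg theory of Jacquet--Piatetski-Shapiro--Shalika in \cite{JPSS-L-factors}.

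First, I would invoke their meromorphic continuation: $Z(W_\infty,\Phi;\chi_1,\chi_2,s_1,s_2)$ extends meromorphically in $(s_1,s_2)$, and the normalised quotient
\[
\widetilde{Z}(W_\infty,\Phi;\chi_1,\chi_2,s_1,s_2) \defeq \frac{Z(W_\infty,\Phi;\chi_1,\chi_2,s_1,s_2)}{L(\pi \times \chi_1,\, s_1+s_2-\tfrac{1}{2})\, L(\pi \times \chi_1\chi_2^{-1},\, s_1-s_2+\tfrac{1}{2})}
\]
is entire; this is the Archimedean counterpart of the first half of \cref{thm:jpss}. Specialising $s_2 = -j/2$ and writing $T = s_1+j/2$, the function $T \mapsto \widetilde{Z}(W_\infty,\Phi_\infty^{j+2};\chi_1,\chi_2,T-j/2,-j/2)$ is an entire function of $T$, and the content of the lemma is that this entire function is polynomial, yielding the desired $P_j(W_\infty; T)$.

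To establish the polynomial statement I would exploit the specific structure of $\Phi_\infty^{j+2}(x,y) = 2^{-1-j}(x+iy)^{j+2}\exp(-\pi(x^2+y^2))$: this is the Fock-model vector spanning the minimal $\mathrm{O}(2)$-type of weight $j+2$ inside $I(\|\cdot\|^{s_2-1/2}, \|\cdot\|^{1/2-s_2}\chi_2^{-1})$, the principal series representation in which $f_{\Phi_\infty^{j+2}}(-;\chi_2,s_2)$ lies. Consequently $W_{\Phi_\infty^{j+2}}(-;\chi_2,-j/2)$ is an explicit $K_\infty$-finite Whittaker vector with a closed form along the diagonal torus in terms of elementary functions. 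Using the Iwasawa decomposition of $\GL_2(\R)$ and the $K_\infty$-finiteness of $W_\infty$, the integral $Z(W_\infty,\Phi_\infty^{j+2};\chi_1,\chi_2,s_1,-j/2)$ collapses to a Mellin transform over the diagonal torus whose $\Gamma$-factor structure reconstructs precisely the two $L$-factors $L(\pi \times \chi_1, s-\tfrac{j+1}{2})$ and $L(\pi \times \chi_1\chi_2^{-1}, s+\tfrac{j+1}{2})$ in the statement, leaving only a polynomial remainder in $s_1+j/2$.

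The main obstacle lies in this final ``polynomial in $T$'' upgrade: \emph{a priori} the entire function $\widetilde{Z}$ could be of arbitrary growth. Here one uses crucially that the $K_\infty$-finite vectors $(W_\infty, \Phi_\infty^{j+2})$ lie in finite-dimensional $K_\infty$-submodules of $\pi_\infty$ and of the relevant principal series, so the collection of zeta integrals spans a finite-dimensional space of meromorphic functions of $s_1$ whose only possible poles come from the denominator $L$-factors. Combined with Stirling-type bounds in vertical strips (and rapid decay of $\Phi_\infty^{j+2}$), this upgrades ``entire'' to ``entire of polynomial growth'', hence polynomial by Liouville; the substitution $T = s_1+j/2$ then produces $P_j(W_\infty; T)$.
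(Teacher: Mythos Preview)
The paper does not give its own proof of this lemma: it is stated with an attribution to \cite{JPSS-L-factors}, prefaced by ``The following is directly analogous to the discussion after \cite[Lem.\ 1.1]{Mah00}''. So there is no argument in the paper to compare against beyond the citation.

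Your sketch is a reasonable outline of the content of the cited result, and the first two steps (meromorphic continuation and holomorphy of the normalised quotient) are exactly the Archimedean Rankin--Selberg theory of Jacquet--Piatetski-Shapiro--Shalika. The one place where your argument is loose is the ``entire of polynomial growth, hence polynomial by Liouville'' step. As written, this needs care: the Archimedean $L$-factor is a product of $\Gamma$-functions, which by Stirling decay exponentially in vertical strips, so dividing the (bounded-in-strips) zeta integral by the $L$-factor could \emph{a priori} produce exponential growth of $\widetilde{Z}$, not polynomial growth. The way this is actually handled in \cite{JPSS-L-factors} (and in Mahnkopf's treatment) is more structural: for $K_\infty$-finite data the Whittaker functions have explicit asymptotic expansions, so the zeta integral is itself a finite linear combination of shifted $\Gamma$-functions times elementary factors; dividing by the $L$-factor then gives an entire function which is visibly a polynomial, rather than appealing to a growth estimate. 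Alternatively one combines the functional equation in $s_1$ with Phragm\'en--Lindel\"of to pin down the growth. Either way, your ``Stirling bounds plus Liouville'' line should be replaced by one of these more precise mechanisms.
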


\begin{remark}\label{rem:P}
Our $P_j(W_\infty;T)$ is not the direct analogue of $P_{w_\infty,\psi_\infty}(T)$ from \cite{Mah00}. Instead, it is scaled by a non-zero rational multiple of a power of $\pi$, which depends on $j$ but not $W_\infty$. In particular, we incorporate (the analogue of) Mahnkopf's $A\cdot \pi^{\flat-l}$ into $P$.
\end{remark}


\section{The \texorpdfstring{$p$-adic $L$-function}{p-adic L-function}}\label{sec:p-adic L-function}

We collect our constructions and prove the interpolation formula, completing the proof of \cref{thm:intro} from the introduction for twists in the ``left half'' of the critical strip. Fix $\Poref = (\Pi,\alpha_p)$ a $P_1$-ordinary $P_1$-refined RACAR of $\GL_3(\A)$ of weight $\lambda = (a,0,-a)$. Recall the auxiliary character $\eta_{2}$ from \cref{not:eta_2}.

\subsection{Main result}
 We fix test data at the finite primes using the following recipe.

\begin{notation}\label{not:test data} \
\begin{itemize}\setlength{\itemsep}{0pt}
	
 \item[--] At $v \nmid p\infty$, choose a finite index set $I_v$, and a collection of Whittaker functions $W_{v, i} \in \cW(\Pi_v, E)$ and Schwartz functions $\Phi_{v, i}$ indexed by $i \in I_v$, such that 
 \[ \sum_{i \in I_v} \widetilde{Z}(W_v,\Phi_v; 1,\omega_{\Pi,v}\eta_{2,v},s_1,s_2) = 1.\]
 This is possible by \cref{thm:jpss}. We let $\varphi_{v, i} \in \Pi_v$ be such that $\cW_\psi(\varphi_{v, i}) = W_{v, i}$. 
 
 We can (and do) suppose that the $\Phi_{v, i}$ take values in $\Z$, and that for primes such that $\Pi_v$ is unramified, we have $|I_v| = 1$ and the corresponding $W_{v, i}$ and $\Phi_{v, i}$ are the normalized spherical data.
 
 \item[--] We define $I = \prod_{v \nmid p\infty} I_v$ (which is finite since almost all the $I_v$ are singletons), so that tensoring together the $W_{v, i}$ and $\Phi_{v, i}$ gives us a finite collection of vectors $W_i^{(p\infty)} \in \cW(\Pi^{(p)}\subf, E)$, $\Phi_i^{(p\infty)} \in \cS((\A\subf^{p})^2, \Z)$ with
 \[ \sum_{i \in I} \widetilde{Z}\left(W_i^{(p\infty)}, \Phi_i^{(p\infty)}; 1,\omega_{\Pi}^{(p\infty)} \eta_{2}^{(p\infty)} ,s_1,s_2\right) = 1.\]

	\item[--] At $v = p$, we take $\varphi_p$ such that $\cW_\psi(\varphi_p) =  W^\alpha$ and $\Phi_p = \Phi_R$ (as in \cref{not:test data p}).
\end{itemize}
\end{notation}

This gives us a finite collection of vectors $\varphi_{\mathrm{f}, i} \in \Pif$ and $\Phi_{\mathrm{f}, i} \in \cS(\A\subf^2, \Z)$. We fix a choice of level group $\cU^{(p)}$ away from $p$ fixing all of the $\varphi_{\mathrm{f}, i}$, and work at level $\cU= \cU^{(p)}\cU_p$ where $\cU_p$ is the subgroup $\cU_p^{(P_1)}(p^{r(\Pi_p, \alpha_p)})$ as above. We normalize our periods, as in \S\ref{sec:integral periods} and \cref{sec:measure to integral}, so that we have $\phi_{\varphi_{\mathrm{f}, i}} \defeq \phi_\Pi(W_{\varphi_{\mathrm{f}, i}})/\Theta_\Pi \in \hc{2}(Y^{\GL_3}(\cU),\sV_\lambda^\vee(\cO_L)) / \{\mathrm{torsion}\}$ for all $i$, for $L/\Qp$ finite (containing $E$).

Recall $\Xi^{[a,j]}(\phi_{\varphi\subf},\Phi^{(p)}) \in \cO_L[\![\Zp^\times]\!]$ from \cref {sec:pairing with cusp form,sec:getting rid of c}.

\begin{definition}
 We write $\iota$ for the involution of $\cO_L[\![\Zp^\times]\!]$ corresponding to $x \mapsto x^{-1}$ on $\Zp^\times$.
\end{definition}

\begin{definition}\label{def:p-adic L-function}
The \emph{left-half $p$-adic $L$-function} of $\Pi$ is defined by
\[ L_p^-(\Pi) \defeq \iota\left( \sum_{i \in I} \Xi^{[a,0]}(\phi_{\varphi_{\mathrm{f}, i}},\Phi^{(p)}_{\mathrm{f}, i})\right) \in \cO_L[\![\Zp^\times]\!].\]
\end{definition}

\begin{remark}
		We include the involution $\iota$ in order to obtain a measure with an interpolating property at integers in $[-a, 0]$ rather than $[0, a]$, to better match the critical range for the complex $L$-function.
		\end{remark}

Recall $\cE_0 \neq 0$ from \eqref{eq:E_s} and \cref{lem:E_0 unit}, and define a period
\begin{equation}\label{eq:Omega}
\Omega_\Pi^- \defeq \frac{\Theta_\Pi \cdot \varepsilon(\Pi_p \times (\omega_{\Pi,p}\eta_{2,p})^{-1},1)}{ \cE_0 \cdot L(\Pi\times(\omega_{\Pi}\eta_{2})^{-1},1)} \in \C^\times.
\end{equation}

The following interpolation, which proves \cref{conj:intro}(i) of the introduction for $n=3$, is our main theorem. The proof will occupy the rest of \cref{sec:p-adic L-function}.

\begin{theorem}\label{thm:main thm}
	Let $\Pi$ be a RACAR of $\GL_3(\A)$ of weight $(a,0,-a)$, admitting an ordinary $P_1$-refinement at $p$. The $p$-adic $L$-function $L_p^-(\Pi) \in \cO_L[\![\Zp^\times]\!]$ from \cref{def:p-adic L-function} satisfies the following interpolation property: for all $(-j,\eta) \in \Crit^-_p(\Pi)$, we have
\begin{equation}\label{eq:interpolation final}
	\int_{\Zp^\times} \eta^{-1}(x)x^{-j} \cdot \mathrm{d}L_p^-(\Pi)(x) = e_\infty(\Pi_\infty \times \eta_\infty, -j) e_p(\Pi_p \times \eta_p, -j)\cdot \frac{L^{(p)}(\Pi\times\eta,-j)}{\Omega_\Pi^-}.
\end{equation}
Here $\Crit^-_p(\Pi) = \{(-j,\eta) : 0 \leq j \leq a, \ \mathrm{cond}(\eta) \mid p^\infty\}$ was defined in \S\ref{sec:rationality}, $e_\infty$ in \cref{def:modified infinity}, and $e_p$ in \cref{def:e_p}.
\end{theorem}

\subsection{Interpolation of \texorpdfstring{$L$}{L}-values, part I}\label{sec:interpolation 1}
Recall from  \cref{sec:cup product integral}: our choices of $\zeta_\infty$ (in \eqref{eq:omega infinity}) and bases $v_\alpha,w_\beta^{[j]}$ determined forms $\varphi_{\infty,r,s,\alpha} \in \Pi_\infty$, and we set $\varphi_{r,s,\alpha} = \varphi_{\infty,r,s,\alpha} \cdot \varphi\subf$. By \cref{theorem:compatibility} and \cref{prop:measure to integral}, the left-hand side of \eqref{eq:interpolation final} is then
\begin{align}
&\stackrel{5.4}{=} \int_{\Zp^\times} \eta(x)  \cdot \mathrm{d}\Xi^{[a,j]}(\phi_{\varphi\subf})(x) \notag \\
&\stackrel{7.2}{=}    \frac{1 }{p^{n}\operatorname{vol}(\cU_{n,p}^H)\alpha_{p}^n  \Theta_\Pi} \sum_{r,s,t} \varepsilon_{rst} \sum_{\alpha,\beta} \langle v_\alpha, w_\beta^{[j]}\rangle_{a,j} \cZ\Big(u\tau_1^n\cdot \varphi_{r,s,\alpha}, \Phi; \eta, \omega_\Pi\eta\eta_2, \tfrac{1-j}{2}, -\tfrac{j}{2}\Big). \label{eq:interpolation intermediate}
\end{align}

Each $\cZ(-)$ is a product of local integrals $Z_v(-)$ by \eqref{eq:global to local}. As the \emph{finite} parts of the $\varphi_{r,s,\alpha}$ are the same, for each $v\nmid\infty$ the integral $Z_v(-)$ is the same in each summand. Such $Z_v(-)$ is computed in \cref{thm:jpss} ($v\nmid p\infty$) and \cref{thm:local zeta p} ($v = p$, noting $W = u\tau_1^n \cdot W^\alpha$). We conclude
\begin{multline*}
\cZ\big(u\tau_1^n \cdot \varphi_{r,s,\alpha}, \Phi; \eta,\omega_\Pi\eta\eta_2,\tfrac{1-j}{2},-\tfrac{j}{2}\big)  =  \\  Z_\infty\big(W_{\infty, r,s,\alpha}, \Phi_\infty^{j+2}; \etahat_{\infty}, \omegahat_{\Pi,\infty}\etahat_{\infty}, \tfrac{1-j}{2}, -\tfrac{j}{2}\big)
\times \frac{\alpha_p^n \cdot \cE_0 }{p^{2R + n}(1 - p^{-1})(1 - p^{-2})}\\
\times e_p(\Pi_p \times \eta_{p}, -j) \cdot L^{(p)}(\Pi \times \eta, -j) \cdot \frac{L(\Pi \times (\omega_{\Pi}\eta_2)^{-1}, 1)}{\varepsilon(\Pi_p \times (\omega_{\Pi,p}\eta_{2,p})^{-1},1)},
\end{multline*}
where  $W_{\infty,r,s,\alpha} = \cW_\psi(\varphi_{\infty,r,s,\alpha}) \in \cW_\psi(\Pi_\infty)$ (recalling that $\eta_2$ is even, so $\etahat_{2,\infty} = 1$).

We now sweep the sums into the local zeta integral at $\infty$. Let
\[
\widetilde{e}_\infty(\zeta_\infty \times \eta_\infty, -j)\defeq \sum_{r,s,t}\varepsilon_{rst}\sum_{\alpha,\beta}\left\langle v_\alpha, w_\beta^{[j]}\right\rangle_{a,j} Z_\infty(W_{\infty, r,s,\alpha}, \Phi_\infty^{j+2}; \etahat_\infty,\omegahat_{\Pi,\infty}\etahat_\infty,\tfrac{1-j}{2},-\tfrac{j}{2}).
\]
 Then
\begin{align}
\eqref{eq:interpolation intermediate} &=  \frac{1 }{p^{n}\operatorname{vol}(\cU_{n,p}^H)\alpha_{p}^n  \Theta_\Pi}  \cdot 	\frac{\alpha_p^n \cdot \cE_0 }{p^{2R + n}(1 - p^{-1})(1 - p^{-2})}  \cdot \widetilde{e}_\infty(\zeta_\infty \times \eta_\infty,-j) \\
&\hspace{50pt} \times e_p(\Pi_p \times \eta_{p}, -j) \cdot L^{(p)}(\Pi \times \eta, -j) \cdot \frac{L(\Pi \times (\omega_{\Pi}\eta_2)^{-1}, 1)}{\varepsilon(\Pi_p \times (\omega_{\Pi,p}\eta_{2,p})^{-1},1)}.\notag
\end{align}
We compute $\mathrm{vol}(\cU_{n,p}^H)^{-1} = p^{2R+2n}(1-p^{-1})(1-p^{-2})$ via \cref{prop:intersection level}, whence by \eqref{eq:Omega} we find
\begin{equation} \label{eq:interpolation intermediate 2}
\int_{\Zp^\times} \eta(x)x^{j} \cdot \mathrm{d}L_p(\Poref)(x)  =
\widetilde{e}_\infty(\zeta_\infty \times \eta_\infty,-j) \cdot e_p(\Pi_p \times \eta_{p}, -j)\cdot \frac{L^{(p)}(\Pi \times \eta, -j)}{\Omega_\Pi^-}.
\end{equation}

\subsection{Non-vanishing at infinity}
It remains to evaluate $\widetilde{e}_\infty(\zeta_\infty \times \eta_\infty,-j)$. We first show it is non-zero. For each $Z_\infty(W_{\infty,r,s,\alpha},-)$ in its definition, by \cref{lem:local zeta infty} we get an associated polynomial $P_j(W_{\infty,r,s,\alpha};T) \in \C[T]$. Define
\[
P_j(\zeta_\infty;s+\tfrac{j}{2}) \defeq \sum_{r,s,t}\varepsilon_{rst}\sum_{\alpha,\beta}\left\langle v_\alpha, w_\beta^{[j]}\right\rangle_{a,j} \cdot P_j(W_{\infty,r,s,\alpha};s + \tfrac{j}{2}).
\]
This is (an explicit non-zero multiple of) the analogue of $P_l(s)$ in \cite[(3.1)]{Mah00} (cf.\ \cref{rem:P}). Combining with \cref{lem:local zeta infty}, we see
\begin{align*}
\widetilde{e}_\infty(\zeta_\infty \times \eta_\infty,-j) =  P_j(\zeta_\infty; \tfrac{1}{2}) \cdot  L(\Pi_\infty \times \eta_\infty, -j)\cdot  L(\Pi_\infty \times \omega_{\Pi,\infty}^{-1}, 1).
\end{align*}
By \cite{KS13}, we know $P_j(\zeta_\infty; \tfrac{1}{2}) \neq 0$ (cf.\ \cite[Thm.\ 2.1]{Ger15}), and hence $\widetilde{e}_\infty(\zeta_\infty \times \eta_\infty,-j) \neq 0$.

\subsection{Symmetric square \texorpdfstring{$p$-adic $L$-functions}{p-adic L-functions}}
\label{sec:symmetric square}
To pin the term at infinity down more precisely, we exploit the fact that \cref{thm:main thm} is known in full when $\Pi$ is essentially self-dual, i.e.\ $\Pi$ is a (twist of a) symmetric square lift. We recall this result.

Let $f$ be a classical cuspidal $p$-ordinary newform of weight $a+2$ and level $N$, and $\theta$ a finite order Hecke character over $\Q$ of prime-to-$p$ conductor. Let $\pi' = \mathrm{Sym}^2(f) \times \theta$ be the symmetric square lift to $\GL_3$, which has weight $(2a,a,0)$, and let $\pi \defeq \pi' \times \|\cdot\|^{-a}$ (which has weight $(a,0,-a)$).

The form $f$ has a unique ordinary $p$-refinement; write $p^{1/2}A_p$ for its $U_p$-eigenvalue, which is a $p$-adic unit (so $v_p(A_p) = -1/2$). Then $\pi$ has an ordinary $P_1$-refinement defined by an unramified character $\sigma_p$, where $\sigma_p(p) = \alpha_p \defeq p^{-a}A_p^2\theta(p)$; as $\theta(p)$ is a root of unity, we have $v_p(\alpha_p) = -a-1$.

If $\omega_\pi(-1) = -1$, let $b = 0$; otherwise let $b=1$.
\begin{theorem}[Schmidt, Hida, Dabrowski--Delbourgo, Rosso] \label{thm:sym square}
There exists $\cL_p^-(\pi) \in \Cp\otimes_{\Zp}\Zp[\![\Zp^\times]\!]$ such that for finite-order characters $\eta$ of $\Zp^\times$, and $0 \leq j \leq a$ with $(-1)^j = \omega_\pi\eta(-1)$, we have
\begin{equation}\label{eq:sym square}
	\int_{\Zp^\times} \eta^{-1}(x)x^{-j}\cdot \mathrm{d}\cL_p(\tilde\pi)(x) = \frac{\omega_\pi(-1)\cdot \Gamma(-j+a+1)}{2^{2a+4}\cdot i^b \cdot (2\pi i)^{-j}} \cdot e_p(\pi_p \times \eta_p,-j)\cdot \frac{L^{(p)}(\pi\times\eta,-j)}{\pi^{a+1}\langle f,f\rangle}.
\end{equation}
\end{theorem}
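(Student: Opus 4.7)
The plan is to derive the statement by repackaging the $p$-adic $L$-functions already constructed in \cite{Sch88,Hid90,DD97}. All three of these works use the Shimura integral representation of $L(\operatorname{Sym}^2 f, s)$ as a Rankin--Selberg convolution of $f$ with a half-integral weight theta series; Hida and Dabrowski--Delbourgo then vary the theta series in a Hida family (resp.\ in a $p$-adic analytic family of half-integral weight forms) to produce a bounded measure on $\Zp^\times$ interpolating critical values of $L(\operatorname{Sym}^2 f \times \theta \eta, s)$. The measure $\cL_p(\tilde\pi)$ in the statement would be obtained from Dabrowski--Delbourgo's construction (the most general, allowing arbitrary $\theta$ of prime-to-$p$ conductor) for the $p$-ordinary refinement attached to $A_p(f)^2$, which by \eqref{eq:sym square satake} is exactly the ordinary refinement $\alpha_p(\pi) = \theta(p) A_p(f)^2 / p^a$ of $\pi$.

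The main step is then translating between normalisations. The symmetric-square $L$-function in the classical literature is $L(\operatorname{Sym}^2 f \times \theta\eta, s)$, whereas in our conventions $\pi = \operatorname{Sym}^2(f) \times \theta \times \|\cdot\|^{-a}$, so $L^{(p)}(\pi \times \eta, -j) = L^{(p)}(\operatorname{Sym}^2 f \times \theta\eta, a+1-j)$ and the critical range $0 \le j \le a$ in our picture corresponds to the left half of the classical critical strip. Under this shift, the modified $p$-Euler factor $e_p(\tilde\pi,\eta,-j)$ matches the Euler-type factor appearing in \cite[Thm.~A]{DD97} at the ordinary root, and the Archimedean factor $\omega_\pi(-1)\,\Gamma(a+1-j) / 2^{2a+4}\, i^b\,(2\pi i)^{-j}$ is what one gets from the Gamma-factor of $\operatorname{Sym}^2 f$ together with Shimura's explicit Archimedean constants (a product of a power of $2\pi i$, a power of $2$, and a sign depending on the parity of $\omega_\pi$, encoded by $b$).

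The principal obstacle, and the only part really requiring work, is a careful bookkeeping of periods and normalising constants: different papers use different conventions for the Petersson product, for the half-integral weight theta series, and for the adelic-to-classical dictionary, and one has to verify that after these translations the denominator is precisely $\pi^{a+1}\langle f,f\rangle$ and the numerical constant is exactly $\omega_\pi(-1)/2^{2a+4} i^b$. This is a standard but tedious computation; in practice I would fix a single reference (Dabrowski--Delbourgo, which is closest to our setup), match their test data at $p$ to ours, and then propagate the resulting constants through the Rankin--Selberg unfolding. No new ideas are needed beyond those already in the cited papers.
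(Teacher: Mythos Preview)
Your proposal is correct and takes essentially the same approach as the paper: both recognise that the theorem is a translation exercise from the existing symmetric-square $p$-adic $L$-functions in the literature, with the only real content being the bookkeeping of normalisations (the shift $s \leftrightarrow a+1-j$, matching the $p$-Euler factors via \eqref{eq:sym square satake}, and tracking the Archimedean constants and period). The paper streamlines this by quoting \cite[Thm.~2.3.2(i)]{LZ-symmetric} as a ready-made summary rather than going back to \cite{DD97} directly, and makes explicit one final twist $\cL_p(\tilde\pi)(x) = \cL_p'(\tilde\pi)(x^{-1})$ to convert integration against $\eta^{-1}(x)x^{-j}$ into integration against $\eta(x)x^j$; you should include this inversion step when you carry out the translation.
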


\begin{proof}
This is summarised in \cite[Thm.\ 2.3.2(i)]{LZ-symmetric}, at least when $p\nmid N$; the case where $p||N$ is described in \cite{Ros16}. For the convenience of the reader we indicate how the statement in \cite{LZ-symmetric} translates to that above; the comparison with \cite{Ros16} follows as both are compatible with Conjecture \ref{conj:intro}(i), which has a uniform statement whether $p$ divides $N$ or not. 

Since $L(\pi,s) = L(\mathrm{Sym}^2(f)\times \eta,s+a+1)$ we renormalise, first defining $\cL_p^-(\tilde\pi)$ so that
\[
\int_{\Zp^\times} f(x) \cdot \mathrm{d}\cL_p^-(\tilde\pi) = \int_{\Zp^\times}x^{a+1}f(x)\cdot \mathrm{d}L_p(\mathrm{Sym}^2f \otimes \theta)
\]
(in the notation of \cite{LZ16}), and making the substitution $-j = s-a-1 = s-k+1$ (with $k = a+2$). Translating between the data attached to the refinements of $f$ and $\pi$ then equates $G(\eta) \cdot \cE_p(s,\eta)$ \emph{op.\ cit}.\ with $e_p(\pi_p \times \eta_p,-j)$ here.   The parity condition \emph{op.\ cit}.\ is $(-1)^{a+1-j}\eta(-1) = - \theta(-1)$; since $\omega_\pi(-1) = \omega_{\mathrm{Sym}^2(f)}(-1)\theta(-1) = (-1)^a\theta(-1)$, this is equivalent to $(-1)^j = \omega_\pi\eta(-1)$. Combining all of this shows that $\int_{\Zp^\times} \eta^{-1}(x)x^{-j} \cdot \cL_p^-(\tilde\pi)(x)$ is equal to the right-hand side of \eqref{eq:sym square}.
\end{proof}

\begin{remark}
The proof of \cref{thm:sym square} is rather circuitous, owing to complications with local Euler factors at the bad primes.
Taking $\theta = 1$ for simplicity, the method initially interpolates values of the ``imprimitive'' symmetric square $L$-function
\[
L^{\mathrm{imp}}(\operatorname{Sym}^2(f), s) = L^{(N_f)}(2s - 2a - 2, \omega_f^2) \cdot \sum_{n \ge 1} a_{n^2}(f) n^{-s},
\]
which differs from the ``true'' symmetric square $L$-function by a product of local error terms at the primes dividing $N_f$. Having constructed an imprimitive $p$-adic $L$-function, one can attempt to define a primitive $p$-adic $L$-function by dividing out by the error term; however, it remains to be shown that the resulting function does not have poles at the zeroes of the error term, and that it has the expected interpolation property at all $(j, \eta)$ in the interpolation range (even if the error term vanishes there, which can occur). This requires rather lengthy case-by-case analysis according to the local factors of $f$ (see \cite[\S 6]{Hid90} and \cite[\S 3.1]{DD97}).

However, for our present purpose of identifying the factors $\widetilde{e}_\infty(\zeta_\infty \times \eta_\infty,-j)$, it suffices to know that there is a (possibly meromorphic)  $p$-adic $L$-function which satisfies the conclusion of Theorem 10.3 for \emph{almost all} $(j, \eta)$ in the appropriate range. This is much more straightforward to prove using the methods of \cite{Sch88}. Combining this partial result towards \cref{thm:sym square} with the output of our present construction, we obtain the full strength of \cref{thm:sym square} as a consequence, yielding an alternative proof not requiring the intricate local computations involved in the previous approach.
\end{remark}

\subsection{Interpolation of \texorpdfstring{$L$}{L}-values, part II}\label{sec:interpolation 2}
We are evaluating $\widetilde{e}_\infty(\zeta_\infty \times \eta_\infty,-j)$. This term depends only on data attached to $\Pi_\infty$ and $j$, as (to be critical) $\eta_\infty = \omega_{\Pi,\infty} \cdot \mathrm{sgn}^j$.

Note in particular $\widetilde{e}_\infty(\zeta_\infty,-j)$ does not depend on any data at $p$. We now exploit this, and existence of symmetric square $3$-adic $L$-functions, to show:

\begin{proposition}\label{prop:ratio}
	For any $0 \leq j \leq a$, we have
	\[
		\widetilde{e}_\infty(\zeta_\infty \times \omega_{\Pi,\infty}\mathrm{sgn}^j,-j) = (2\pi i)^{j} \cdot \tfrac{\Gamma(-j+a+1)}{\Gamma(a+1)} \cdot \widetilde{e}_\infty(\zeta_\infty\times \omega_{\Pi,\infty},0).
	\]
\end{proposition}

\begin{proof}
	 To ease notation, we write $\widetilde{e}_\infty(\zeta_\infty,-j) = \widetilde{e}_\infty(\zeta_\infty \times \omega_{\Pi,\infty}\mathrm{sgn}^j,-j)$. 
	
From \eqref{eq:Pi_infty}, for fixed $a$ there are only two possibilities for $\Pi_\infty$; denote them by
\[
\Pi_\infty^{a,+} \defeq \mathrm{Ind}_{P_2(\R)}^{\GL_3(\R)} (D_{2a+3}, \mathrm{id}) \qquad \Pi_\infty^{a,-} \defeq \mathrm{Ind}_{P_2(\R)}^{\GL_3(\R)} (D_{2a+3}, \mathrm{sgn}).
\]

\begin{lemma}
	Let $a \in \Z_{\geq 0}$. For $\epsilon \in \{\pm\}$, there exists a RACAR $\pi^{a,\epsilon}$ such that:
	\begin{itemize}
		\item[(i)] $\pi^{a,\epsilon}_\infty = \Pi_{\infty}^{a,\epsilon}$;
		\item[(ii)] $\pi^{a,\epsilon}_3$ is $B$-ordinary;
		\item[(iii)] There exists an elliptic modular newform $f_a$ of weight $a+2$ and a character $\theta$ such that $\pi^{a,\epsilon} \cong \mathrm{Sym}^2(f_a)\otimes \theta||\cdot||^{-a}$.
	\end{itemize}
\end{lemma}
\begin{proof}
	First suppose $a = 0$ and $\epsilon = +$. Take $f_0$ to be the unique newform of level 15, trivial character and weight 2. This newform is 3-ordinary and not of CM-type, so $\pi^{0,+} \defeq \mathrm{Sym}^2(f)$ is the required RACAR.

	By Hida theory, for any $a+2 \ge 2$ there exists a 3-ordinary newform $f_a$ of weight $a+2$ congruent to $f_0$ mod 3 (with trivial character if $k$ is even, and quadratic character mod 3 if $k$ is odd). Since the mod 3 Galois representation associated to $f_0$ is surjective, $f_a$ cannot be of CM-type, so lifts to a RACAR of $\GL_3$. We may thus take $\pi^{a,+}\defeq \mathrm{Sym}^2(f_a)\otimes||\cdot||^{-a}$.

	Finally we take $\pi^{a,-} \defeq \pi^{a,+} \otimes \theta$, for $\theta$ an odd finite order Hecke character unramified at $3$.
\end{proof}

Now we return to the proof of \cref{prop:ratio}. The lemma implies that for our given $\Pi$, there exists a RACAR $\pi \cong \mathrm{Sym}^2(f) \otimes \theta||\cdot||^{-a}$ such that $\Pi_\infty \cong \pi_\infty$ and $\pi_3$ is $B$-ordinary (hence $P_1$-ordinary). As $\widetilde{e}_\infty(\zeta_\infty,-j)$ only depends on the factor at infinity, it suffices to work with $\pi$ rather than $\Pi$. Let $L_3^-(\pi)$ and $\cL_3^-(\pi)$ be the $3$-adic $L$-functions of \cref{def:p-adic L-function} and \cref{thm:sym square} respectively.

Note that $L_3^-(\pi)$ and $\cL_3^-(\pi)$ interpolate the same $L$-values $L(\pi\times\eta,-j)$, so when considered as (bounded) rigid analytic functions on weight space $\sW$, they are supported on the same half of $\sW$. In particular, we can make sense of $L_3^-(\pi)/\cL_3^-(\pi) \in \mathrm{Frac}(\C_3\otimes_{\Z_3}\Z_3[\![\Z_3^\times]\!])$ as a well-defined meromorphic function on $\sW$. This quotient is uniquely determined by its integral against finite-order characters $\eta$ of $\Z_3^\times$ such that $\eta(-1) = \omega_\pi(-1)$ (and vanishes when $\eta(-1) = - \omega_\pi(-1)$). By considering $j = 0$ in \eqref{eq:interpolation intermediate 2} and \cref{thm:sym square}, we deduce that
\[
\int_{\Z_3^\times} \eta(x) \cdot \mathrm{d}\frac{L_3^-(\pi)}{\cL_3^-(\pi)} =  \widetilde{e}_\infty(\zeta_\infty,0) \cdot \left[\frac{\omega_\pi(-1)\cdot \Gamma(a+1)}{2^{2a+4}\cdot i^b \cdot \pi^{a+1}}\right]^{-1} \cdot \frac{\langle f,f\rangle}{\Omega_\Pi^-}.
\]
As this is independent of $\eta$, the quotient $L_3^-(\pi)/\cL_3^-(\pi)$ is \emph{constant}, say equal to $C \in \C_3^\times$.

Now let $0 \leq j \leq a$, and $\eta$ such that $(-1)^j = \omega_\pi\eta(-1)$. By constancy, this is
\begin{align*}
C &= \int_{\Z_3^\times} \eta(x)x^{j}(x) \cdot \mathrm{d}\frac{L_3^-(\pi)}{\cL_3^-(\pi)}\\
&= \widetilde{e}_\infty(\zeta_\infty,-j)\cdot\left[ \frac{\omega_\pi(-1)\cdot \Gamma(-j+a+1)}{2^{2a+4}\cdot i^b \cdot (2\pi i)^{-j}\cdot \pi^{a+1}}\right]^{-1}  \cdot \frac{\langle f,f\rangle}{\Omega_\Pi^-},
\end{align*}
where the second equality is the interpolation formula; and hence we have
\begin{equation}\label{eq:ratio}
\widetilde{e}_\infty(\zeta_\infty,-j) = (2\pi i)^{j} \cdot \tfrac{\Gamma(-j+a+1)}{\Gamma(a+1)} \cdot \widetilde{e}_\infty(\zeta_\infty,0),
\end{equation}
proving \cref{prop:ratio}.
\end{proof}

Now return to our original representation $\Pi$.  We are free to renormalise $\zeta_\infty$ by any element of $\C^\times$; this then rescales $\Theta_\Pi$, $\Omega_\Pi^-$, $P_j(\zeta_\infty;s)$, and hence also $\widetilde{e}_\infty(\zeta_\infty \times \omega_{\Pi,\infty}\mathrm{sgn}^j,-j)$. We renormalise it so that $\widetilde{e}_\infty(\zeta_\infty \times \omega_{\Pi,\infty},0) = e_\infty(\Pi_\infty \times \omega_{\Pi,\infty},0).$ Since by definition 
\[
e_\infty(\Pi_\infty \times \omega_{\Pi,\infty}\mathrm{sgn}^j,-j) = (2\pi i)^{j} \cdot \tfrac{\Gamma(-j+a+1)}{\Gamma(a+1)} \cdot e_\infty(\Pi_\infty \times \omega_{\Pi,\infty},0),
\]
from \cref{prop:ratio} we deduce
\begin{equation}\label{eq:infty compatibility}
\widetilde{e}_\infty(\zeta_\infty \times \omega_{\Pi,\infty}\mathrm{sgn}^j,-j) = e_\infty(\Pi_\infty \times \omega_{\Pi,\infty}\mathrm{sgn}^j,-j).
\end{equation}
\cref{thm:main thm} follows by combining this with \eqref{eq:interpolation intermediate 2}, recalling that if $(-j,\eta)$ is critical, then $\eta_\infty =  \omega_{\Pi,\infty}\mathrm{sgn}^j$.

\begin{remark}
Combining the equality \eqref{eq:infty compatibility} with \cref{thm:mahnkopf} also completes the proof of the Algebraicity Conjecture (\cref{conj:intro-alg}) for $n=3$ for $(-j,\eta) \in \Crit^-(\Pi)$; the analogous result for $\Crit^+(\Pi)$ will be proved in the next section.
\end{remark}

\section{Duality and functional equations}
 \label{sec:functional equation}

 Our results so far have focused entirely on $P_1$-nearly-ordinary representations, and on interpolation of $L$-values $L(\Pi\times \eta,-j)$ for $(-j,\eta) \in \Crit^-(\Pi)$, the left half of the critical range. In this section we complete the proof of \cref{conj:intro} by studying the mirror-image picture for $P_2$-nearly-ordinary refinements, for which we obtain interpolation over $\Crit^+_p(\Pi)$, that is, in the right-half of the critical range.

 Suppose $\Pi$ is a RACAR of $\GL_3$ admitting a (necessarily unique) $P_2$-nearly-ordinary refinement $\sigma_p \times \sigma_p'$. Using the invariance of \cref{conj:intro} under twisting, as in \cref{prop:nearly-ordinary vs ordinary}, we may assume without loss of generality that $\sigma_p'$ is an unramified character\footnote{We might call such refinements ``co-ordinary''.}. Hence the dual $\sigma_p^{\prime\vee}$ defines a $P_1$-ordinary refinement of $\Pi^\vee$; and \cref{thm:main thm} thus yields a $p$-adic $L$-function $L_p^-\big(\Pi^\vee\big) \in \cO_L[\![\Zp^\times]\!]$.

 \begin{definition} \
 	\begin{itemize}
 		\item[(i)] If $C \in \Zp^\times$, let $[C] \in \cO[\![\Zp^\times]\!]^\times$ denote the Dirac measure defined by
 		\[
  	\int_{\Zp^\times} f(x) \cdot \mathrm{d}[C] = f(C).
 		\]
   \item[(ii)] Let $\operatorname{tw}_1$ be the unique automorphism of $\cO_L[\![\Zp^\times]\!]$ sending $[C]$ to $C \cdot [C]$, and (as above) $\iota$ the involution sending $[C]$ to $[C^{-1}]$, so 
   \[  \int x^s\, \mathrm{d}\iota(\operatorname{tw}_1 \mu) = \int x^{1-s}\, \mathrm{d}\mu
   \]
   for all $s \in \Z$ and $\mu \in \cO_L[\![\Zp^\times]\!]$.

 		\item[(iii)] Define the measure $L_p^+(\Pi)$ to be
 		\[
 		L_p^+\big(\Pi\big) \defeq \frac{1}{\varepsilon^{(p)}(\Pi^{\vee,(p)}, 0)} \cdot (\iota \circ \operatorname{tw}_1) \left(\left[N_{\Pi}^{(p)}\right] L_p^-\big( \Pi^\vee \big)\right) \in \cO_L[\![\Zp^\times]\!].
 		\]
 	\end{itemize}
 \end{definition}

 Here $N_\Pi^{(p)}$ is the prime-to-$p$ part of the conductor of $\Pi$ (the integer such that the functional equation of the complex $L$-function involves a factor of $N_{\Pi}^{-s}$); the Dirac measure $[1/N_\Pi^{(p)}]$ is a $p$-adic avatar of this factor.
 Let also $\Omega_\Pi^+ \defeq \Omega_{\Pi^\vee}^-.$

 \begin{proposition}
 The $p$-adic $L$-function $L_p^+(\Pi)$ satisfies the following interpolation property: for all $(j+1,\eta) \in \Crit^+(\Pi)$, we have
 \begin{equation}\label{eq:interpolation final +}
 	\int_{\Zp^\times} \eta(x)x^{j+1} \cdot \mathrm{d}L_p^+(\Pi)(x) = e_\infty(\Pi_\infty \times \eta_\infty^{-1},j+1)\cdot e_p\big(\Pi_p \times \eta_p^{-1}, j+1\big)\cdot \frac{L^{(p)}(\Pi\times\eta^{-1},j+1)}{\Omega_\Pi^+}.
 \end{equation}
 \end{proposition}

 \begin{proof}
 	By \cref{thm:main thm}, we have
 	\begin{align*}
 		\int_{\Zp^\times} \eta(x)x^{j+1} \cdot \mathrm{d}L_p^+(\Pi)(x) = (N_{\Pi}^{(p)})^{-j}&\cdot \eta(N_{\Pi}^{(p)})^{-1} \cdot \varepsilon^{(p)}(\Pi^{\vee,(p)},0)^{-1}\\
 		&\times e_\infty(\Pi_\infty^\vee \times \eta_\infty, -j)\cdot e_p(\Pi^\vee_p \times \eta_p,-j)\cdot \frac{L^{(p)}(\Pi^\vee\times\eta,-j)}{\Omega_{\Pi^\vee}^-}.
 	\end{align*}
 	In \cite[(20)]{coates89}, Coates proves the equality
 	\begin{align*}
 	 e_\infty(&\Pi_\infty^\vee \times \eta_\infty,-j) \cdot e_p(\Pi^\vee_p \times \eta_p, -j)\cdot \frac{L^{(p)}(\Pi^\vee\times\eta,-j)}{\Omega_{\Pi^\vee}^-}\\
 	 &=\Big(\prod_{\ell \neq p} \varepsilon_\ell(\Pi_\ell^\vee\times \eta_\ell,-j)\Big) \cdot 	e_\infty(\Pi_\infty \times \eta_\infty^{-1}, j+1)\cdot e_p(\Pi_p \times \eta_p^{-1},j + 1)\cdot \frac{L^{(p)}(\Pi\times\eta^{-1},j+1)}{\Omega_{\Pi}^+}.
 	\end{align*}
 	Here we recall that the modified Coates--Perrin-Riou factors were set up to use the additive character $\psi$ for $\Crit^-_p(\Pi^\vee)$ and $\psi^{-1}$ for $\Crit^+_p(\Pi)$. As $\eta$ is unramified at $\ell \neq p$, as in \cite{coates89} we can exploit \cite[(3.4.6)]{Tat79} to see
 	\[
 	 \varepsilon_\ell(\Pi_\ell^\vee\times \widehat{\eta}_\ell,-j) = \varepsilon_\ell(\Pi_\ell^\vee,0) \cdot \ell^{jc_\ell}\cdot \widehat{\eta}_\ell^{-1}(\ell^{c_\ell}),
 	\]
 	where $c_\ell$ is the exponent of $\ell$ in $N_{\Pi^\vee} = N_{\Pi}$. In particular, we have
 	\[
 	 \prod_{\ell \neq p} \varepsilon_\ell(\Pi_\ell^\vee\times \widehat{\eta}_\ell,-j) = \varepsilon^{(p)}(\Pi^{\vee,(p)},0) \cdot  (N_\Pi^{(p)})^j \cdot \widehat{\eta}^{(p)}(N_\Pi^{(p)})^{-1}.
 	\]
 	We conclude by combining all terms, noting $\hat\eta^{(p)}(N_\Pi^{(p)})^{-1} = \widehat{\eta}_p(N_\Pi^{(p)})^{-1} = \eta(N_\Pi^{(p)})$ via the conventions in \S\ref{sect:dirichlet}.
 \end{proof}
 
 With this, we have completed the proof of \cref{thm:intro}.

\appendix
\normalsize
\section{Proof of \texorpdfstring{\cref{thm:lfactor}}{\cref*{thm:lfactor}}}

 We now give the proof of the ``partial functional equation'' relating the $Y$ and $Z$ integrals. Compare Proposition A.3 of \cite{LZ-distinguished}, which is a closely related computation in the case of $\GL_2 \times \GL_2$, rather than $\GL_3 \times \GL_2$. Replacing $(\pi, \chi_1, \chi_2)$ with $(\pi \times \chi_1, \mathrm{id}, \chi_2)$, we may suppose without loss of generality that $\chi_1 = \mathrm{id}$.

\begin{remark}
The overall shape of the computation is as follows: the integral $Z(W, \Phi; \chi_1, \chi_2, s_1, s_2)$ involves the function $W^{\Phi}(-; \chi_2, s_2)$, which lies in the Whittaker model of a $\GL_2$ principal-series representation (depending on the parameters $\chi_2$ and $s_2$). Meanwhile, the $Y$ integral involves $f^{\Phi}(-; \chi_2, s_2)$, which lies in the natural model of the $\GL_2$ principal series (as a space of functions on $\GL_2$ transforming by a character under left-translation by the Borel). There is an explicit intertwining operator from the natural induced model of the principal-series representation to its Whittaker model, sending $f^{\Phi}(-; \chi_2, s_2)$ to $W^{\Phi}(-; \chi_2, s_2)$; and we need to show that the composite of the $Z$ integral and the intertwining operator agrees with the $Y$ integral up to the constant factor $\gamma(\pi \times \chi_1/\chi_2, s_1 - s_2 + \tfrac{1}{2})$. 

We note that both integrals define elements of the space $\Hom_H(\pi \times \sigma, \C)$, where $\sigma$ is a principal-series $H$-representation (depending on the parameters $s_i, \chi_i$). For sufficiently general values of $s_i$, the representation $\sigma$ is irreducible and generic, so this Hom-space has dimension 1, by a well-known multiplicity-one result (see the introduction of \cite{prasad93}, where the result is attributed to J.~Bernstein). Hence the two integrals must be proportional, and it remains only to identify the constant as a $\gamma$-factor. 
\end{remark}

We begin by recalling some $\GL_3 \times \GL_1$ zeta-integrals introduced by Jacquet et al. We let $F = \Q_p^\times$ (in fact any nonarchimedean local field would work here).

\begin{definition}
For $W \in \cW(\Pi)$ we consider the integrals
\[ \Psi_0(W, \chi, s) = \int_{F^\times} W\left(\sdthree{a}{1}{1}\right) \chi(a) |a|^{s-1}\ \mathrm{d}^\times\!a\]
and
\[ \Psi_1(W, \chi, s) = \int_{F \times F^\times} W\left(\smallthreemat{a}{}{}{x}{1}{}{}{}{1}\right) \chi(a) |a|^{s-1} \ \mathrm{d}x\, \mathrm{d}^\times\!a.\]
\end{definition}

By \cite[Thm.\ 2.7(i),(ii)]{JPSS-RankinSelberg}, both integrals converge for $\Re(s) \gg 0$ and have meromorphic continuation to all $s$, and the greatest common divisor of the values of either integral is $L(\pi \times \chi, s)$. By part (iii) of the cited theorem, we also have a functional equation for $\Psi_j$; let us write
\[ w_r = \smallthreemat{}{}{1}{}{\iddots}{}{1}{}{} \in \GL_r, \qquad w_{r, t} = \stbt{\mathrm{id}_t}{}{}{w_{r-t}}\ (0 \le t \le r).\]
For $W \in \cW(\pi)$, we write $\tilde{W}(g) = W(w_r {}^t\! g^{-1})$; the functions $\{ \tilde{W} : W \in \cW(\pi, \psi)\}$ form the Whittaker model $\cW(\pi^\vee, \psi^{-1})$. Then for $j = 0, 1$ we have the functional equation
\begin{equation}\label{eq:JPSS functional equation}
\Psi_{(1-j)}(w_{3, 1} \cdot \tilde{W}, \chi^{-1}, 1-s) =
\gamma(\pi \times \chi, \psi, s) \cdot \Psi_j(W, \chi, s).
\end{equation}

We now write the torus integral $y(\cdots)$ (from \cref{def:y}) in terms of $\Psi_0$:
\begin{align*}
y(W,& \mathrm{id}, \chi_2, s_1, s_2)(g)\defeq
|\det g|^{s_1-\tfrac{1}{2}} \int_{a,b} W\left(\sdthree{a}{1}{-b^{-1}} w_{3, 1} g\right)\\
&\hspace{40pt}|a|^{s_1 + s_2 - 3/2} |b|^{s_2 - s_1 - 1/2} \chi_2(b)\  \mathrm{d}^\times\! a\, \mathrm{d}^\times\! b \\
&= |\det g|^{s_1-\tfrac{1}{2}} \int_a |a|^{s_1 + s_2 - 3/2} \\
&\hspace{40pt}\left[ \int_b W\left(w_3 \sdthree{-b^{-1}}{1}{1} w_3 \sdthree{a}{1}{1} w_{3, 1} g\right) |b|^{s_2 - s_1 - 1/2} \chi_2(b)\mathrm{d}^\times b \right] \mathrm{d}^\times\! a\\
&= |\det g|^{s_1-\tfrac{1}{2}} \int_a |a|^{s_1 + s_2 - 3/2} \\
&\hspace{10pt}\left[ \int_b w_{3, 1} \cdot \left[ w_{3, 1} w_3 \sdthree{a}{1}{1} w_{3, 1} g \cdot W\right]^{\sim}\left( \sdthree{-b}{1}{1} \right) |b|^{s_2 - s_1 - 1/2} \chi_2(b)\mathrm{d}^\times b \right] \mathrm{d}^\times\! a\\
&= \chi_2(-1)|\det g|^{s_1-\tfrac{1}{2}} \int_a |a|^{s_1 + s_2 - 3/2}\\
&\hspace{40pt}\Psi_0\left( w_{3, 1} \cdot \left[ w_{3, 1} w_3 \sdthree{a}{1}{1} w_{3, 1} g \cdot W\right]^{\sim}, \chi_2, s_2 - s_1 + \tfrac{1}{2}\right)\ \mathrm{d}^\times\! a,
\end{align*}
with the $\chi_2(-1)$ arising from the change of variables $b \leftrightarrow -b$. Applying the functional equation \eqref{eq:JPSS functional equation} to the inner term, we can write this as
\begin{multline*}
y(W, \mathrm{id}, \chi_2, s_1, s_2)(g) =  \\
\gamma(\pi \times \chi_2^{-1},\psi, s_1 - s_2 + \tfrac{1}{2})\chi_2(-1)|\det g|^{s_1-\tfrac{1}{2}} \int_a |a|^{s_1 + s_2 - 3/2}\\
\Psi_1\left(  w_{3, 1} w_3 \sdthree{a}{1}{1} w_{3, 1} g \cdot W, \chi_2^{-1}, s_1 - s_2 - \tfrac{1}{2}\right)\ \mathrm{d}^\times a.
\end{multline*}
The integral expands to
\begin{align*}
\int_a &|a|^{s_1 + s_2 - 3/2} \int_{b,x} |b|^{s_1 - s_2 - \tfrac{1}{2}} \chi_2(b)^{-1} W\left(
\smallthreemat{b}{}{} {x}{1}{} {}{}{1}
w_{3, 1} w_3 \sdthree{a}{1}{1} w_{3, 1} g\right) \ \mathrm{d}x\, \mathrm{d}^\times\! a\, \mathrm{d}^\times\! b\\
&= \int_{a,b,x} |a|^{s_1 + s_2 - 3/2} |b|^{s_1 - s_2 - \tfrac{1}{2}} \chi_2(b)^{-1} W\left(\iota(w_2 \stbt{1}{x/b}{}{1}\stbt{a}{}{}{b} g)\right) \ \mathrm{d}x\, \mathrm{d}^\times a\, \mathrm{d}^\times b\\
&= \int_{a,b,x} |a|^{s_1 + s_2 - 3/2} |b|^{s_1 - s_2 + \tfrac{1}{2}} \chi_2(b)^{-1} W\left(\iota(w_2 \stbt{1}{x}{}{1}\stbt{a}{}{}{b} g)\right) \ \mathrm{d}x\, \mathrm{d}^\times a\, \mathrm{d}^\times b.
\end{align*}
By \eqref{eq:Y and y}, integrating this against $f_{\Phi}$ gives $Y(\cdots)$. In this pairing, we obtain an inner integral over $B_2$ and an outer integral over $B_2 \backslash \GL_2$; so this rearranges into
\begin{multline*}
Y(W, \Phi; \mathrm{id}, \chi_2, s_1, s_2) = \\
\chi_2(-1)\gamma(\pi \times \chi_2^{-1},\psi, s_1 - s_2 + \tfrac{1}{2}) \int_{\GL_2(F)} W(\iota(g))f_{\Phi}(w_2 g) |\det g|^{s_1-\tfrac{1}{2}}\ \mathrm{d}g.
\end{multline*}
Splitting into an integral over $N_2$ and an integral over $N_2 \backslash \GL_2$, the $N_2$ factor acts on $W$ via $\psi$, so the integral over $\GL_2$ equals
\[
\int_{N_2 \backslash \GL_2(F)} W(\iota(g)) |\det g|^{s_1-\tfrac{1}{2}} \left(\int_F f_{\Phi}(w_2 \stbt 1 x 0 1 g; \chi_2, s_2) \psi(x)\ \mathrm{d}x\right)\mathrm{d}g.
\]
Since $w_2 = \smallmatrd{1}{}{}{-1}\smallmatrd{}{1}{-1}{}$, and $f_{\Phi}$ transforms as $\chi_2(-1)$ under $\smallmatrd{1}{}{}{-1}$, by \cite[\S8.1]{LPSZ19} the second integral is precisely $\chi_2(-1)W_\Phi(g;\chi_2,s_2)$. The $\chi_2(-1)$'s cancel; we conclude since
\begin{align*}
Y(W, \Phi; \mathrm{id}, \chi_2, s_1, s_2) &= \gamma(\pi \times \chi_2^{-1},\psi, s_1 - s_2 + \tfrac{1}{2})\\
&\hspace{20pt}  \times \int_{N_2 \backslash \GL_2(F)} W(\iota(g))  W_\Phi(g; \chi_2,s_2) |\det g|^{s_1-\tfrac{1}{2}}\mathrm{d}g\\
&=  \gamma(\pi \times \chi_2^{-1},\psi, s_1 - s_2 + \tfrac{1}{2})  Z(W, \Phi; \mathrm{id}, \chi_2, s_1, s_2).
\qed\end{align*}

\subsection*{Glossary of notation/terminology}

\footnotesize \hspace{1pt}
\addcontentsline{toc}{section}{Glossary of notation}
\begin{multicols}{2}
	\noindent $a$\dotfill integer $\geq 0$; weight of $\Pi$ (\cref{sec:cohomological})\\
	$\alpha_p = \sigma_p(p)$ \dotfill (for $\sigma_p = P_1$-refinement)\\
	$B \subset \GL_3$\dotfill upper-triangular Borel\\
	$\beta_p$  \dotfill Satake parameter (\cref{sec:L-function})\\
	$\mathrm{br}^{[a,j]}$ \dotfill branching law (\cref{sec:branching laws})\\
	$\Crit_p^\pm(\Pi)$ \dotfill critical values (\cref{sec:rationality})\\
	$c \in \Z$ \dotfill aux.\ integer prime to $6p$\\
	$\chi$ \dotfill Dirichlet character (\cref{sect:dirichlet})\\
	$\chihat$ \dotfill Hecke character (\cref{sect:dirichlet})\\
	$\Delta_n$ \dotfill $(\Z/p^n\Z)^\times$\\
	$E$\dotfill suff.\ large number field (\cref{sec:periods})\\
	$E_\Phi$ \dotfill adelic Eisenstein series (\cref{sec:Eis-GL2})\\
	$E_\Phi^{j+2,\chi}$\dotfill \eqref{eq:E j chi}\\
	$\cE_{\Phif}^{j+2}$ \dotfill classical Eisenstein series (\cref{sec:classical eisenstein})\\
	$\cE_{\Phif}^{j+2,\chi}$ \dotfill \eqref{eq:classical E j chi}\\
	$\mathrm{Eis}_{\Phif}^{j+2}$ \dotfill Betti--Eisenstein class (Cor.\ \ref{cor:betti-eisenstein})\\
	${}_c\mathrm{Eis}_{\Phif}^{j+2}$ \dotfill integral $\mathrm{Eis}_{\Phif}^{j+2}$ (Thm.\ \ref{thm:integral betti eisenstein})\\
	${}_c \cE\cI_{\Phi^{(p)}}$\dotfill Eisenstein--Iwasawa class (\cref{sec:Eisenstein interpolation})\\
	$e_\infty(\Pi_\infty \times \eta_\infty,t)$\dotfill mod.\ Euler factor at $\infty$ (\cref{sec:rationality})\\
	$\widetilde{e}_\infty(\zeta_\infty \times \eta_\infty,t)$ \dotfill Mahnkopf factor at $\infty$ \eqref{eq:interpolation intermediate 2}\\
	$e_p(\Pi_p \times \eta_{p}, t)$ \dotfill mod.\ Euler factor at $p$ (\cref{sec:coates--perrin-riou})\\
	$\eta$ \dotfill Dirichlet char.\ of cond.\ $p^n$\\
	$\eta_2$ \dotfill  Dir.\ char., cond. $D$ prime to $p$ (Not.\ \ref{not:eta_2})\\
	$G(\eta)$ \dotfill Gauss sum (\cref{sect:dirichlet})\\
	$\gamma_p$  \dotfill Satake parameter (\cref{sec:L-function})\\
	$H$ \dotfill $\GL_2 \times \GL_1$\\
	$I(-,-)$ \dotfill principal series for $\GL_2$ (\cref{sec:Eis-GL2})\\
	$I_j(\chihat)$ \dotfill $I(\|\cdot\|^{-1/2}, \chihat^{-1}\|\cdot\|^{k+1/2})$ \eqref{eq:I_j(chi)}\\
	$\iota : H \hookrightarrow \GL_3$ \dotfill \eqref{eq:iota}\\
	$\cH_J$ \dotfill symmetric space for $J$ (\S\ref{sec:symmetric spaces})\\
	$J$ \dotfill general reductive group\\
	$j$ \dotfill integer $0 \leq j \leq a$ \\
	$K_{\GL_n,\infty} \subset \GL_n(\R)$ \dotfill max.\ cpct subgroup\\
	$K_{n,\infty}^\circ \subset \GL_n(\R)$\dotfill $= K_{\GL_n,\infty}^\circ Z_{\GL_n,\infty}^\circ$\\
	$L$ \dotfill suff.\ large $p$-adic field (Rem.\ \ref{rem:L})\\
	$\lambda = (a,0,-a)$\dotfill weight of $\Pi$\\
	$N \subset \GL_3$ \dotfill upper-triangular unipotent\\
	$N_{P_i}$ \dotfill unipotent in $P_i$\\
	$\nu_1 : H \to \GL_1$ \dotfill $(\gamma,z) \mapsto \det(\gamma)/z$\\
	$\nu_2 : H \to \GL_1$ \dotfill $(\gamma,z) \mapsto z$\\
	$\nu_{1,(n)} : \widetilde{Y}^H \to \Delta_n$ \dotfill \eqref{eq:nu_n}\\
	$\Omega_\Pi^-$ \dotfill \eqref{eq:Omega}\\
	$\Omega_\Pi^+$ \dotfill $= \Omega_{\Pi^\vee}^-$ (\cref{sec:functional equation}) \\
	$\omegahat_\Pi$ \dotfill central char. of $\Pi$\\
	$P_1,P_2 \subset \GL_3$ \dotfill max.\ standard parabolics (\cref{sec:ordinarity})\\
	$\Pi$ \dotfill RACAR of $\GL_3$\\
	$\Phi, \Phif$ \dotfill Schwartz functions (\cref{sec:schwartz})\\
	$\Phi\subf^t$ \dotfill specific Schwartz function (\cref{sec:Eisenstein interpolation})\\
	$\phi_\Pi$\dotfill map $\cW_\psi(\Pif) \to \hc{2}(-)$ (\cref{sec:auto cohom classes}) \\
	$\mathrm{pr}_{\GL_2}$ \dotfill projection $H \to \GL_2$\\
	$\varphi \in \Pi$ \dotfill cusp form (in $L^2_0(\GL_3(\Q)\backslash\GL_3(\A))$)\\
	$\varphi_{\infty,r,s,\alpha}$ \dotfill vectors at $\infty$ (\cref{sec:diff form pi})\\
	$\psi$ \dotfill additive character of $\Q\backslash\A$ (\cref{sect:dirichlet})\\
	RACAR \dotfill regular alg.\ cusp.\ auto.\ rep.\\
	$R_\chi$ \dotfill $\chi^{-1}$-projector \eqref{eq:R_chi}\\
	$\cS,\cS_0$ \dotfill spaces of Schwartz functions (\cref{sec:schwartz})\\
	$\mathrm{tw}_j$ \dotfill twist by $\|\nu_1\|^{-j}$ (\cref{sec:construction branching})\\
	$\tau = \tau_1 \in \GL_3(\Qp)$ \dotfill $\mathrm{diag}(p,1,1)$ \eqref{eq:tau}\\
		$\tau_2 \in \GL_3(\Qp)$ \dotfill $\mathrm{diag}(p,p,1)$\\
	$\Theta_\Pi$ \dotfill cohomological period (\cref{sec:periods})\\
	$U_{p,1},U_p$ \dotfill (normalised) Hecke ops.\ at $p$ (\eqref{eq:tau}, \S\ref{sec:local systems})\\
	$U_{p,1}',U_p'$ \dotfill (normalised) adjoint Hecke ops.\  (\cref{sec:local systems})\\
	$\cU^J \subset J(\A\subf)$ \dotfill open compact level\\
	$\cU^{(p)}$ \dotfill prime-to-$p$ level\\
	$\cU_n \subset \GL_3(\Zp)$ \dotfill Def.\ \ref{def:U_n}\\
	$\cU_1^H(p^t) \subset H(\A\subf)$ \dotfill (\cref{sec:level tower})\\
	$u$ \dotfill $\smallthreemat{1}{0}{1}{}{1}{0}{}{}{1}\smallthreemat{1}{}{}{}{}{-1}{}{1}{}$\\
	$V_\lambda$ \dotfill highest weight rep. of $\GL_3$ (\cref{sec:alg rep})\\
	$V_\mu^J$ \dotfill highest weight rep. of $J$\\
	$\cV_n \subset \GL_3(\Zp)$ \dotfill Def.\ \ref{def:V_n}\\
	$\sV_*^\ast$ \dotfill local system attached to $V_*^\ast$ (\cref{sec:local systems}) \\
	$\cW_\psi$ \dotfill Whittaker transform (\cref{sec:whittaker})\\
	$W$ \dotfill element in $\cW_\psi(\Pi)$\\
	${}_c\Xi^{[a,j]}_n, {}_c\Xi^{[a,j]}$ \dotfill $\h^3(Y^{\GL_3})$-valued measures (\S\ref{sec:pairing with cusp form})\\
	$\Xi^{[a,j]}_n, \Xi^{[a,j]}$ \dotfill measures (\cref{sec:getting rid of c})\\
	${}_c \xi^{[a,j]}_n, \xi^{[a,j]}_n$ \dotfill Def.\ \ref{def:xi aj}\\
	$Y^J(\cU)$ \dotfill loc.\ sym.\ space for $J$ (\S \ref{sec:symmetric spaces})\\
	$Y(-)$ \dotfill local zeta integral (Def.\ \ref{def:local zeta Y})\\
	$\widetilde{Y}^H$ \dotfill modified space for $H$ \eqref{eq:tilde Y}\\
	$Z_{\GL_n,\infty}$ \dotfill centre of $\GL_n(\R)$\\
	$Z(-)$ \dotfill local zeta integral (Def.\ \ref{def:local zeta Z})\\
	$\widetilde{Z}(-)$ \dotfill modified $Z^{k+2,\chi}$ (Thm.\ \ref{thm:jpss})\\
	$\cZ(-)$ \dotfill global zeta integral (Def.\ \ref{def:global zeta integral})\\
	${}_c z^{[a,j]}_n, z^{[a,j]}_n$ \dotfill Def.\ \ref{def:zaj}\\
	$\zeta_\infty$ \dotfill  $\in \h^2(\mathfrak{gl}_3,K_{3,\infty}^\circ; \Pi_\infty \otimes V_{\lambda}^{\vee}(\C) )$ \eqref{eq:omega infinity}\\
	$\langle-,-\rangle_{\cU}$ \dotfill Poincar\'e duality pairing (\cref{sec:cup product pairing})\\
	$\langle-,-\rangle_{a,j}$ \dotfill branching law pairing \eqref{eq:branching pairing}\\
	$(-)^\circ$ \dotfill identity component
\end{multicols}

%
%
\footnotesize
\setlength{\parskip}{0ex}
\renewcommand{\baselinestretch}{1}
\renewcommand{\refname}{\normalsize References}

\renewcommand{\MRhref}[1]{\relax}
\providecommand{\bysame}{\leavevmode\hbox to3em{\hrulefill}\thinspace}
\providecommand{\MR}[1]{}
\providecommand{\href}[2]{#2}
\newcommand{\articlehref}[2]{\href{#1}{#2}}

\Addresses

  \end{document}